\documentclass[11pt]{amsart}
\usepackage{geometry} 
\usepackage{pgfplots}
\usepackage{psfrag}
\usepackage{amsmath}
\usepackage{calc}
\usepackage{mathrsfs}
\usepackage{systeme}
\usepackage{amsfonts}
\usepackage{amsthm}
\usepackage{algorithm2e}
\usepackage[applemac]{inputenc}
\usepackage{bbold}
\usepackage[numbers]{natbib}

\newcommand{\be}{\begin{equation}}
\newcommand{\ee}{\end{equation}}
\newcommand{\ba}{\begin{aligned}}
\newcommand{\ea}{\end{aligned}}

\newcommand{\R}{\mathbb{R}}

\newcommand{\scr}{\mathscr}

\newcommand{\mpo}{\mathcal{P}(\mathcal O)}
\newcommand{\mppo}{\mathcal{P}_p(\mathcal O)}
\newcommand{\mpprd}{\mathcal{P}_p(\R^d)}

\newcommand{\mpt}{\mathcal{P}_2(\R^d)}

\newcommand{\mo}{\mathcal{O}}
\newcommand{\eps}{\epsilon}

\newcommand{\ol}{\overline}
\newcommand{\mcl}{\mathcal}

\newtheorem{Theorem}{Theorem}[section]

\newtheorem{Lemma}[Theorem]{Lemma}
\newtheorem{Rem}[Theorem]{Remark}
\newtheorem{Def}[Theorem]{Definition}
\newtheorem{Prop}[Theorem]{Proposition}

\usepackage{appendix}
\usepackage{chngcntr}
\usepackage{etoolbox}
\usepackage{xcolor} 
\usepackage{hyperref}

\hypersetup{
    colorlinks=true, 
    linkcolor=blue,  
    citecolor=blue,  
    urlcolor=blue    
}

\newcommand{\triplenorm}[1]{{\left\vert\kern-0.25ex\left\vert\kern-0.25ex\left\vert #1 
    \right\vert\kern-0.25ex\right\vert\kern-0.25ex\right\vert}}

\title{Wasserstein spaces have the Radon-Nikodym property}

\begin{document}





\author{Charles Bertucci \textsuperscript{a}}
\thanks{ \textsuperscript{a} CEREMADE, CNRS, UMR 7534, Universit\'e Paris Dauphine-PSL, 75016 Paris, France.}

\maketitle

\begin{abstract}
Wasserstein spaces carry two natural geometries: a vertical one, inherited from the linear structure of signed measures, and a horizontal one, induced by couplings. We prove that a Radon-Nikodym property holds in both. In the vertical geometry, bounded subsets of $\mpprd$ are dentable in an ambient Banach space which itself fails the Radon-Nikodym property; we derive variational principles, Radon-Nikodym theorems for vector measures and martingale convergence results. In the horizontal geometry, we show that a Radon-Nikodym property is inherited from the one of spaces of random variables. We introduce horizontal martingales and prove that they are exactly the laws of $L^p$-valued martingales.
\end{abstract}



\setcounter{tocdepth}{1}

\tableofcontents
\section*{Introduction}
\subsection*{General introduction}The Radon-Nikodym property (RNP) is a geometric property usually stated for Banach spaces. It has many equivalent definitions and formally, it can be said that the RNP holds in a Banach space $X$ when the Radon-Nikodym theorem holds for $X$-valued vector measures. More geometrically, a space $X$ satisfies the RNP if any bounded subset has non-empty slices (a cut by a continuous linear form) of arbitrarily small diameters. Precise definitions shall be given later on. The links between the RNP and geometric properties of Banach spaces has been long identified, notably through the works of Phelps (see Section 5 in \citep{phelps}) and Bourgain \citep{bourgain1,bourgain2}, and links with convergence of martingales were made explicit \citep{diestel}. For a complete presentation of the RNP and its links with different areas of mathematics, we refer to Diestel and Uhl \citep{diestel}.\\

In our opinion, there are two main geometries on Wasserstein spaces: a flat (or vertical) one originating from the linear structure of the vector space of signed measures, and a horizontal one arising from the geometry of the space on which the Wasserstein space is defined. We refer to Bertucci \citep{bertuccibook} for a presentation of this point of view. Our main contribution is to prove that the RNP holds generally for the vertical geometry in Wasserstein spaces, and that, if $p\in (1,\infty)$, it also holds in the horizontal geometry for the $p$ Wasserstein space over $\R^d$. Note that in neither of the two cases, Wasserstein spaces will be seen as Banach spaces.\\

In the vertical geometry, we shall embed Wasserstein spaces into certain Banach spaces, very similar to Lipschitz free spaces. We will prove that, even if the larger Banach spaces do not satisfy the RNP, it holds in the Wasserstein spaces. We will prove that the property is satisfied by showing the dentability of bounded sets, i.e. by constructing slices of small diameters directly. Usual consequences of the RNP are then shown to hold in this geometry.\\

 In the horizontal geometry, we adopt a more intrinsic point of view. Following the systematic approach developed by P.-L. Lions in \citep{lions20112012}, we lift the problem into appropriate spaces of random variables, which are well known to satisfy the RNP. Pulling down this information at the level of the Wasserstein space, we shall obtain non-linear versions of the RNP. Notably, by using this lifting/pulling-down procedure, we are able to define and study a notion of (horizontal) martingale on Wasserstein spaces, even though there is no linear structure on such spaces to define the conditional expectation.
 
 \subsection*{Motivations}
 The original motivation for the results presented in this paper stems from several problems in analysis and probability theory. While the present work is primarily theoretic, it is driven by three main areas of applications. 
 
 The first one is analysis on Wasserstein spaces, especially partial differential equations, where variational principles, in the spirit of Ekeland's variational principle \citep{ekeland}, are necessary to establish rigorously various results. Because of the nature of the problem of interest, more precise results than Ekeland's are often necessary. For the moment, quite involved results were presented in \citep{cosso2021master,bayraktar}, and a simpler result providing less regularity for the perturbation was proved in \citep{nonstegall}. Theorem \ref{thm:mainstegall} below, is a more powerful result, whose proof can be understood simply through the RNP. It is established by means of the vertical geometry mentioned above, but it is almost a purely topological statement on Wasserstein spaces (almost because a notion of convex set is used). 
 
 A second problem of interest is the definition of martingales in Wasserstein spaces, which are not vector spaces and thus ill set for the use of conditional expectation. As stochastic processes in Wasserstein spaces are becoming more and more omnipresent in the mathematical literature \citep{cdll,lacker,sturm2}, defining martingales valued in these spaces is a natural question. In the vertical geometry, we introduced a framework in which Wasserstein spaces can be seen as subsets of Banach spaces, thus enabling the use of standard notions and tools. In the horizontal geometry, we introduce a new definition in the spirit of the ones used to define martingales on manifolds \citep{schwartz}. In both cases, we established mean convergence results (Theorems \ref{thm:charles} and \ref{thm:martconv}).
 
 The third direction this paper was motivated by is the construction of a reference measure on Wasserstein spaces. Such a direction of research is quite active at the moment \citep{sturm3,sturm2,schiavo} and the current work does not exactly contribute to it, but we believe they are not entirely disconnected. Such a reference measure could be naturally obtained as the long time limit of certain measure-valued processes and we believe Theorem \ref{thm:martconv} below can be used in such directions. 
 
\subsection*{Organization of the manuscript}
The rest of the paper is organized as follows. In the preliminaries, we recall some usual results and definitions about Wasserstein spaces and we introduce the two geometries of interest. We also recall some basic facts about the RNP. In Part \ref{part:vert}, we establish a vertical version of the RNP, namely by constructing, in Section \ref{sec:slices}, slices of small diameters of bounded subsets. We then draw the usual conclusions from this fact: some perturbed optimization results, analytic versions of the Radon-Nikodym Theorem for vector measures and convergence of martingales in respectively Sections \ref{sec:optim}, \ref{sec:analytic} and \ref{sec:mart1}. Those conclusions follows easily once the framework of Section \ref{sec:fa} is put in place. Extensions are discussed in Section \ref{sec:extension}. In Part \ref{part:hor}, by lifting the problems at hand to suitable sets of random variables, we establish some geometric properties of convex sets in the associated geometry (Section \ref{sec:geohor}) and, in Section \ref{sec:marthor}, define what we believe is a new notion of martingale for which we prove a mean convergence result.

\section*{Preliminaries}
For $p \in (1,\infty)$, we use the notation $p'=\frac{p}{p-1}$. For a tuple $(x_1,\dots,x_n)$, $\pi_i$ stands for the projection of the $i$-th component. A point of minimum of a function defined on a subset of a metric space is said to be strongly exposed if all the minimizing sequences converge toward this minimum (for the underlying distance). A point of a subset of a Banach space is strongly exposed if it is a strongly exposed point of minimum of a linear functional on this set. The Sobolev spaces of order $k$ with integrability coefficient $p=2$ on a space $\mo$ are denoted by $H^k(\mo)$.

\subsection*{Wasserstein spaces}Given a Polish (i.e. complete separable metric) space $(\mo,d)$, and $p\in [1,\infty)$, the $p$-Wasserstein space over $\mo$ is the set $\mppo$ defined by 
$$
\mppo : = \left\{ µ \in \mathcal P(\mo) \bigg | \,\exists x_0 \in \mo, \int_\mo (d(x,x_0))^pµ(dx) < \infty\right\},
$$
equipped with the distance $W_p$ given by
$$
W_p(µ,\nu) = \left(\inf_{\gamma \in \Pi(µ,\nu)} \int_{\mo \times \mo}d(x,y)^p\gamma(dx,dy) \right)^\frac1p,
$$
where $\Pi(µ,\nu)$ stands for the set of couplings between $µ, \nu \in \mpo$. For a proof of the fact that $(\mppo,W_p)$ is itself a Polish space, we refer to \citep{villani}.

For $p \in [1,\infty)$ and $µ \in \mpprd$, we write $M_p(µ)= \int_{\R^d}|x|^pµ(dx)$. For $p \in (1,\infty)$ we also define the map $\mcl I_p: \mpprd \times \mcl P_{p'}(\R^d)\mapsto \R$ by
\be\label{defI}
\mcl I_p(µ,\nu) = \inf_{\gamma \in \Pi(µ,\nu)} - \int_{\R^d\times \R^d} x\cdot y\, \gamma(dx,dy).
\ee
Given two measurable sets $X$ and $Y$, a measure $µ$ on $X$ and a measurable map $f: X\mapsto Y$, the image measure of $µ$ by $f$ is denoted by $f_\#µ$.

For $p \in [1,\infty)$, we define the set of $\R^d$ valued random variables on the probability space $([0,1],\mcl B,Leb)$ with finite $p$ moment by $\mathbb L^p$. Note that it is a Banach space equipped with the norm $\|X\|_p = (\mathbb E[|X|^p])^\frac1p$, which is reflexive when $p \in (1,\infty)$.

Given $µ,\nu \in \mcl P_1(\R^d)$, we say that $µ$ is smaller than $\nu$ in the convex order and we note $µ\leq_c \nu$ if for any convex function $\phi:\R^d \mapsto \R$, $\int_{\R^d} \phi dµ \leq \int_{\R^d}\phi d\nu$.

\subsection*{RNP}
Let $X$ be a Banach space. We follow the definition of Phelps \citep{phelps} and say that a subset $A \subset X$ has the RNP if, every bounded subset $B \subset A$ is dentable. The dentability of $B$ means that for every $\eps > 0$, there exists $y \in X'$ and $\alpha \in \R$ such that $\{x \in B | \langle y,x \rangle \leq \alpha\}$ is non empty and has diameter at most $\eps$. Such a property implies strong results, such as Stegall's Lemma (Theorem 5.15 in \citep{phelps}) on any closed bounded convex subset with the RNP, and when the space $X$ itself has the RNP: usual forms of the Radon-Nikodym Theorem (Part III in \citep{diestel}) or mean convergence of martingales (Corollary V.2.4 in \citep{diestel}). In fact, it is a routine check that we will do below to prove that local versions of those results can be obtained, provided a careful restriction, if the RNP is only valid on certain subsets of $X$.\\

A typical Banach space failing the RNP is the Lebesgue space $L^1([0,1],\R)$, whose unit ball is not dentable.

\subsection*{Two convexities}
Let $A\subset \mpprd$. Classically, the set $A$ is said to be convex if for all $µ,\nu \in A$, $t\in [0,1]$, $(1-t) µ +t \nu \in A$. It is said to be coupling convex if for all $µ,\nu \in A$, $\gamma \in \Pi(µ,\nu)$, $t\in [0,1]$, $((1-t)\pi_1 + t\pi_2)_\#\gamma \in A$. We denote by $co(A)$ the convex hull of $A$ and by $cco(A)$ the coupling convex hull of $A$.

Analogously, a function $U: \mpprd \mapsto \R$ is said to be convex if for all $µ,\nu \in A$, $t\in [0,1]$, $U((1-t) µ +t \nu ) \leq (1-t)U(µ) + t U(\nu)$. It is said to be coupling convex if for all $µ,\nu \in A$, $\gamma \in \Pi(µ,\nu)$, $t\in [0,1]$, $U(((1-t)\pi_1 + t\pi_2)_\#\gamma) \leq (1-t) U(µ) + t U(\nu)$.

The coupling convexity is the geometry of the lift. Indeed let $U: \mpprd \mapsto \R$ be coupling convex, and define its lift $\mcl U : \mathbb L^p \mapsto \R$ by $\mcl U(X) = U(\mcl L(X))$. Then $\mcl U$ is convex. For $p \in (1,\infty)$, the function $\mcl I_p$ defined in \eqref{defI} is coupling concave in each of its arguments.
Coupling convexity behaves quite well with the convex order of probability measures. For instance, we can find in \citep{bellini2021law} the following results, which are easy consequences of a result of Ryff \citep{ryff} on the set of random variables with the same law.
\begin{Prop}\label{prop:preliminaries}
Let $p \in [1,\infty)$.
\begin{itemize}
\item For all $µ \in \mpprd$, $\ol{cco}(\{µ\}) = \{\nu \in \mpprd, \nu \leq_c µ\}.$
\item Let $\phi: \mpprd \mapsto \R$ be coupling convex and $µ,\nu \in \mpprd, µ\leq_c \nu$. Then $\phi(µ) \leq \phi(\nu)$.
\end{itemize}
\end{Prop}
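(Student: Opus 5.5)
The plan is to lift everything to $\mathbb L^p$ and use two facts: the coupling-convex hull corresponds to convex combinations of random variables, and, by Ryff's theorem, the closed convex hull of $\{X : \mcl L(X)=µ\}$ in $\mathbb L^p$ consists of the conditional expectations $\mathbb E[X\,|\,\mathcal G]$ (or their laws), which are exactly the laws dominated by $µ$ in convex order.

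\emph{Inclusion $\ol{cco}(\{µ\})\subset\{\nu\leq_c µ\}$.} First I will check that the set $C_µ:=\{\nu\in\mpprd : \nu\leq_c µ\}$ is coupling convex. Take $\nu_1,\nu_2\leq_c µ$, a coupling $\gamma$ and $t\in[0,1]$, and let $\eta=((1-t)\pi_1+t\pi_2)_\#\gamma$. For convex $\phi$, convexity gives
\[
\int\phi\, d\eta\le (1-t)\int\phi\, d\nu_1+t\int\phi\, d\nu_2\le\int\phi\, dµ .
\]
Next, $C_µ$ is $W_p$-closed. Convex $\phi$ with $|\phi(x)|\le C(1+|x|^p)$ pass to the limit under $W_p$-convergence. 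A general convex $\phi$ is the increasing limit of convex functions of at most linear growth (suprema of finitely many affine minorants), so monotone convergence reduces to that case. Since $µ\in C_µ$, we get $\ol{cco}(\{µ\})\subset C_µ$.

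\emph{Inclusion $\{\nu\leq_c µ\}\subset\ol{cco}(\{µ\})$.} By Strassen's theorem, $\nu\leq_c µ$ gives a martingale coupling: random variables $(Y,X)$ with $\mcl L(X)=µ$, $\mcl L(Y)=\nu$ and $Y=\mathbb E[X\,|\,Y]$. Realize them on $[0,1]$. The task is to approximate $Y$ in $\mathbb L^p$ by convex combinations $\sum_i \lambda_i X\circ T_i$, where the $T_i$ are measure-preserving maps, so that each $X\circ T_i$ has law $µ$.

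For this I will discretize. Take a finite partition $\{A_k\}$ generated by $Y$ with $\|Y-\mathbb E[X\,|\,\sigma(A_k)]\|_p$ small; this is possible by $\mathbb L^p$ martingale convergence along refining partitions. On each atom $A_k$, the average $\mathbb E[X\,|\,A_k]$ is the limit of averages of $X\circ T_i$, where the $T_i$ are rearrangements cycling $X$ within $A_k$; one can use the ergodic/Ryff construction, or simply $N$ measure-preserving shifts of a standard representation of $A_k$ with uniform weights $1/N$. Finally I must relate laws of convex combinations of random variables to $cco$. If $Z_1,Z_2$ have laws in $cco(\{µ\})$, then $\mcl L((1-t)Z_1+tZ_2)=((1-t)\pi_1+t\pi_2)_\#\mcl L(Z_1,Z_2)$, so by induction the law of any finite convex combination of variables with law $µ$ lies in $cco(\{µ\})$. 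Since $W_p(\mcl L(Z),\mcl L(Y))\le\|Z-Y\|_p$, the approximation gives $\nu\in\ol{cco}(\{µ\})$. The main obstacle is this approximation of a conditional expectation by convex combinations of rearrangements. My first attempt would be to invoke Ryff's result, as the paper suggests: the closed convex hull of the orbit of $X$ under measure-preserving maps contains $\mathbb E[X\,|\,\mathcal G]$. Otherwise I would fall back on the explicit discretization above.

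\emph{Second item.} Assume $\phi$ is coupling convex. By induction along the same pushforward identity, $\phi(\mcl L(\sum\lambda_iZ_i))\le\sum\lambda_i\phi(\mcl L(Z_i))=\phi(\nu)$ whenever each $\mcl L(Z_i)=\nu$. So $\phi\le\phi(\nu)$ on $cco(\{\nu\})$, and by the first item $µ$ lies in its closure. To conclude $\phi(µ)\le\phi(\nu)$ I need lower semicontinuity. If the statement does not assume it, I would obtain it from convexity of the lift $\mcl U$: a convex function on $\mathbb L^p$ bounded above on a neighborhood is continuous. This would require some local boundedness, which I flag as the delicate point. Alternatively, I would avoid closures entirely by writing $µ$ exactly as the law of $\mathbb E[X\,|\,\mathcal G]$, and using a Jensen-type inequality for the convex lift together with the exact conditional expectation.
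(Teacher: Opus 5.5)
Your proof of the first item is correct, and it is more self-contained than the paper's treatment. The paper gives no proof there; it only refers to Bellini et al.\ and to Ryff's (one-dimensional) description of closed convex hulls of rearrangements. You get the hard inclusion directly in $\R^d$ from three ingredients: Strassen's theorem, $L^p$-martingale convergence along finite partitions generated by $Y$, and averages of $N$ rotations inside each atom. These averages are contractions on $\mathbb L^p$ and converge on continuous functions, hence on all of $\mathbb L^p$. Your closedness argument for $\{\nu\leq_c\mu\}$ via maxima of affine minorants is also fine.

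\textbf{The gap.} It is in the last step of the second item. The statement assumes only that $\phi$ is real valued and coupling convex, and your argument gives $\phi\leq\phi(\nu)$ only on $cco(\{\nu\})$. To reach $\mu\in\ol{cco}(\{\nu\})$ you need $\phi$ to be lower semicontinuous for $W_p$, and neither fallback provides this.
\begin{itemize}
\item The first fallback needs local boundedness of the lift $\mcl U$, which is precisely what you do not know.
\item The ``exact'' route does not avoid the closure. Strassen realizes $\mu$ exactly only as the law of a conditional expectation, which is an infinite mixture of rearrangements. For instance, the constant $\alpha$ equals the Bochner integral $\int_0^1\chi_{[0,\alpha)}((\cdot+u)\bmod 1)\,du$ in $\mathbb L^p$. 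Jensen's inequality for such integrals is proved through continuous affine minorants, i.e.\ through lower semicontinuity. Here it would be literally the inequality $\mcl U(\alpha)\leq\mcl U(\chi_{[0,\alpha)})$ you want to prove.
\end{itemize}

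A finite exact representation cannot replace the limit either. Take $\nu=(1-\alpha)\delta_0+\alpha\delta_1$ with $\alpha\notin\mathbb Q$. Then $\delta_\alpha\leq_c\nu$ but $\delta_\alpha\notin cco(\{\nu\})$. Indeed, by gluing, the elements of $cco(\{\nu\})$ are the laws of $\sum_i\lambda_iZ_i$ with $Z_i\sim\nu$. Suppose such a sum were a.s.\ equal to $\alpha$, and let $A$ be the affine hull of the finite support of $(Z_1,\dots,Z_n)$ in $\{0,1\}^n$. Then $A$ is given by rational equations and contains the mean $(\alpha,\dots,\alpha)$. Moreover $z\mapsto\sum_i\lambda_iz_i$ is identically $\alpha$ on $A$, so $0\notin A$. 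Hence $A$ meets the diagonal in a single point, which is rational, forcing $\alpha\in\mathbb Q$.

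So for the statement as written you would need some additional ingredient that the proposal does not supply, such as an automatic-continuity result for real-valued law-invariant convex functions on $\mathbb L^p$. The clean fix is to assume that $\phi$ is $W_p$-lower semicontinuous (e.g.\ continuous); under that hypothesis your proof is complete. This is also all the paper actually uses. The second item is only applied to $-\mcl I_p(\cdot,\nu)$, which is continuous, in the proof of Proposition~\ref{prop:step1}. It is also applied to the continuous coupling convex functions in the definition of horizontal martingales, in the identification of deterministic horizontal martingales with peacocks.
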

 For more details on coupling convexity, we refer the reader to \citep{bertuccibook}, or to \citep{pinzi,cavagnari} where it is called total convexity.

In all what follows, on Wasserstein spaces, the flat or usual convexity is what we call the vertical geometry whereas the coupling convexity is the convexity of the horizontal geometry.

\part{Vertical geometry}\label{part:vert}
To lighten the notation of this section, we work with the set $\mpt$, equipped with the $W_2$ distance, and discuss extensions at the end of this part.

\section{A geometric property}\label{sec:slices}
The main result of this section is the following.
\begin{Theorem}\label{thm:1}
For any non-empty, bounded, closed, convex subset $\mathscr C \subset \mpt$, there exists $µ \in \mathscr C$ such that, for any $\eps > 0$, $µ\notin \ol{co}(\mathscr C \setminus B(µ,\eps))$.
\end{Theorem}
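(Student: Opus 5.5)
The plan is to produce a point $µ\in\mathscr C$ that is strongly exposed by a \emph{linear} functional on measures, namely $\nu\mapsto\int\phi\,d\nu$ for a suitable continuous $\phi$ of quadratic growth. The conclusion then follows from the usual slice argument. Suppose $\phi$ is such that $\mu$ is the unique minimizer of $L(\nu)=\int\phi\, d\nu$ on $\mathscr C$, and every minimizing sequence converges to $µ$ in $W_2$. Fix $\eps>0$. Strong exposure gives $\delta>0$ with $L(\nu)\geq L(µ)+\delta$ for all $\nu\in\mathscr C\setminus B(µ,\eps)$. This inequality passes to convex combinations by linearity. It also passes to $W_2$-limits, because $\phi$ is continuous with $|\phi(x)|\leq C(1+|x|^2)$, so $L$ is $W_2$-continuous. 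Hence $L\geq L(µ)+\delta$ on $\ol{co}(\mathscr C\setminus B(µ,\eps))$, and $µ$ does not belong to this set. Note that convexity and closedness of $\mathscr C$ are used only to make the minimization well posed; all the work is in finding $\phi$.

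To find $\phi$, I will use a perturbed variational principle of Stegall/Deville--Godefroy--Zizler type, applied to the functional $\nu\mapsto\int\phi\,d\nu$ on $\mathscr C$ (with $\phi$ in a Banach space $E$), as follows.
\begin{enumerate}
\item Take $E$ to be a separable Banach space of functions. A natural candidate is a weighted Sobolev space of the form $\phi(x)=(1+|x|^2)\psi(x)$ with $\psi\in H^k(\R^d)$ for $k>d/2+1$. This choice makes $E$ embed continuously into $C^1$ functions with quadratic growth, and makes its norm dominate the Lipschitz-type quantities used in the next step.
\item Show that $\mathscr C$ is $W_2$-relatively compact in a weak sense: it is bounded in second moment, hence tight. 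Deduce that for every $\phi\in E$ with strictly super-quadratic decay of the perturbation, $L$ attains its minimum on $\mathscr C$. The tool is lower semicontinuity under narrow convergence together with uniform integrability coming from the growth of $\phi$. I will start from $\phi_0(x)=|x|^4$-type coercivity truncated appropriately; alternatively I will work on $\mathcal P_4$-bounded sets and use density.
\item Prove that the set of $\phi\in E$ for which the minimizer is unique and strongly exposed is a dense $G_\delta$. The argument is the classical one. For each $n$, define $U_n$ as the set of $\phi$ such that the slice $\{\nu\in\mathscr C:\ L_\phi(\nu)<\inf L_\phi+t\}$ has $W_2$-diameter $<1/n$ for some $t>0$. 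Openness of $U_n$ is immediate. For density, perturb $\phi$ by a small multiple of $x\mapsto |x-x_0|^2$-type bump functions, or more robustly by elements of $E$ that separate measures. The key point is that the map $\nu\mapsto(\int e_j\,d\nu)_j$, for a countable dense family $(e_j)$ in $E$, is injective on $\mathscr C$ and is a homeomorphism onto its image for the topology of $W_2$ restricted to sets with controlled higher moments.
\end{enumerate}

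The main obstacle is step 3: converting ``small diameter in the weak topology induced by $E$'' into ``small $W_2$ diameter''. Narrow convergence plus convergence of second moments gives $W_2$ convergence. So I will make sure the perturbations include the function $|x|^2$ itself, and then control second moments inside slices by uniform integrability. The uniform integrability I will get by adding to the minimized functional a coercive term such as $\eta\int|x|^2\log(1+|x|)\,d\nu$. Its infimum need not be finite on $\mathscr C$, though, so I expect to first reduce to $\mathscr C\cap\{\int|x|^{2+\theta}d\nu\le R\}$ when that set is non-empty. This reduction is where I anticipate the real difficulty, and where the dentability argument may instead have to be done by hand via an explicit slice construction.
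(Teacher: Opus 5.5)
The first paragraph of your proposal is correct, and it is how the paper concludes. A functional $\nu\mapsto\int\phi\,d\nu$ with $\phi$ continuous and $|\phi|\le C(1+|x|^2)$ is linear and $W_2$-continuous, so a uniform gap outside $B(\mu,\epsilon)$ passes to $\overline{co}(\mathscr C\setminus B(\mu,\epsilon))$. You also only need such a $\phi$ for each $\epsilon$, not one strongly exposing $\phi$.

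\textbf{The gap is in producing $\phi$.}
\begin{itemize}
\item In step 3 you invoke ``the classical argument'' for density of the sets $U_n$. In Stegall's principle, density of $U_n$ is exactly where dentability of the underlying set is used. Without it the $U_n$ can even be empty: every slice of the unit ball of $L^1$ has diameter $2$. So step 3 presupposes the theorem.
\item Your remark that $\nu\mapsto(\int e_j\,d\nu)_j$ is a homeomorphism on sets with controlled higher moments helps only on compact sets. You give no mechanism for transferring information from a compact piece back to the bounded, non-compact $\mathscr C$.
\item The reduction to $\mathscr C\cap\{\int|x|^{2+\theta}d\nu\le R\}$ fails twice. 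That set can be empty for all $\theta,R$. Even when it is not, a point exposed within it need not be dented in $\mathscr C$, because other points of $\mathscr C$ can lie in the slices.
\item Adding a super-quadratic coercive term makes the functional only lower semicontinuous for $W_2$. That breaks the closure step of your first paragraph.
\end{itemize}

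\textbf{What the paper does instead.}
\begin{itemize}
\item[(i)] Fix $\mu_0\in\mathscr C$ and use de la Vall\'ee-Poussin to get a smooth radial $\psi$ with $\psi(x)/(1+|x|^2)\to\infty$ and yet $\int\psi\,d\mu_0<\infty$. This removes your worry about an infinite infimum, with no moment restriction on $\mathscr C$.
\item[(ii)] The argmin set $K_\psi$ of $\langle\psi,\cdot\rangle$ on $\mathscr C$ is $W_2$-compact. On a compact set, a generic $\xi\in H^k$ has a unique, hence strongly exposed, minimizer $\bar\mu$. This follows from Rockafellar's theorem and the Asplund property of $H^k$ (Lemma \ref{lemma:comp}).
\item[(iii)] A compactness-by-contradiction argument shows that the lexicographic perturbation $\psi+\delta\xi$ has a uniform gap $\kappa$ on all of $\mathscr C\setminus B(\bar\mu,\epsilon)$.
\item[(iv)] To restore $W_2$-continuity, $\psi$ is replaced by a truncation $\psi_M$ of exactly quadratic growth. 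A second contradiction argument shows the gap $\kappa/2$ survives for $M$ large. It uses $\psi_n\uparrow\psi$, Beppo Levi, and the uniform integrability of second moments that comes from the bound on $\int\psi_n\,d\mu_n$.
\end{itemize}
Only after (iv) does your first-paragraph argument apply, with $\phi=\psi_M+\delta\xi$.
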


Before giving the proof of Theorem \ref{thm:1}, we present the next easy result to give an idea of what we are trying to prove.

\begin{Prop}\label{prop:dirac}
For any $\mathscr C \subset \mpt$, $x \in \R^d$ such that $\delta_x \in \mathscr C$, $\eps > 0$, it holds that $\delta_x \notin \ol{co}(\mathscr C \setminus B(\delta_x,\eps))$.
\end{Prop}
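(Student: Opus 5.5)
The natural tool is a linear functional on measures that is minimized exactly at $\delta_x$ and grows like the Wasserstein distance. Take $\phi(y)=|y-x|^2$ and consider the linear map $L(\nu)=\int_{\R^d}\phi\,d\nu$. This map is well defined and finite on $\mpt$, and it is affine along the flat segments $(1-t)\mu+t\nu$. Moreover, the only coupling between $\nu$ and $\delta_x$ is the product one, so
\[
W_2(\nu,\delta_x)^2=\int_{\R^d}|y-x|^2\,\nu(dy)=L(\nu).
\]
Hence every $\nu\in\mathscr C\setminus B(\delta_x,\eps)$ satisfies $L(\nu)\ge\eps^2$. Since $L$ is linear, every finite convex combination of such measures also satisfies $L\ge\eps^2$. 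In other words, $co(\mathscr C\setminus B(\delta_x,\eps))\subset\{L\ge\eps^2\}$.

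The second step is to pass to the closure $\ol{co}$, taken in $(\mpt,W_2)$. I will show that $\{\nu\in\mpt: L(\nu)\ge\eps^2\}$ is closed for $W_2$. This holds because $L(\nu)=W_2(\nu,\delta_x)^2$ is continuous for $W_2$; it is in fact $1$-Lipschitz after taking square roots, by the triangle inequality. So the closed convex hull still lies in $\{L\ge\eps^2\}$.

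Finally, $L(\delta_x)=0<\eps^2$, so $\delta_x\notin\ol{co}(\mathscr C\setminus B(\delta_x,\eps))$.

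The only point needing care is the topology in which $\ol{co}$ is taken. If the paper intends closure in an ambient Banach space (the Lipschitz-free-type space introduced later), then $L$ must be continuous there, and $\phi$ has quadratic growth rather than being Lipschitz. In that case I would use lower semicontinuity instead: write $L$ as a supremum of the integrals of bounded Lipschitz truncations $\min(\phi,n)$. Each truncated integral is continuous in the weaker topology, so $L$ is lower semicontinuous, and therefore the superlevel set $\{L\ge\eps^2\}$ would not by itself be closed. To close the gap, I would use the fact that $\delta_x$ is the unique point with $L=0$ together with the boundedness and tightness given by second moments on the sets considered. Within the statement as posed, though, closure in $W_2$ suffices.
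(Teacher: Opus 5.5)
Your argument is correct and is exactly the paper's proof: the functional $\nu\mapsto\int_{\R^d}|y-x|^2\,\nu(dy)=W_2^2(\nu,\delta_x)$ is at least $\eps^2$ on $\mathscr C\setminus B(\delta_x,\eps)$, and by linearity and $W_2$-continuity this bound passes to $\ol{co}(\mathscr C\setminus B(\delta_x,\eps))$, while the functional vanishes at $\delta_x$. Your closing worry about an ambient Banach space is unnecessary: the paper's space $E$ has dual the weighted space $X=W^{2,\infty}(\R^d)$ rather than Lipschitz functions, so $y\mapsto|y-x|^2$ belongs to $X$ and the functional is continuous on $E$ as well, and in any case the $E$- and $W_2$-topologies coincide on $\mpt$.
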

\begin{proof}
For all $\eps > 0$, $\nu \in \mathscr C\setminus B(\delta_x,\eps)$,
$$
\int_{\R^d}|y-x|^2\nu(dy) = W_2^2(\nu,\delta_x) \geq \eps^2.
$$
Hence, by linearity and continuity (for $W_2$) of the integral above in $\nu$, for any $\nu \in \ol{co}(\mathscr C \setminus B(\delta_x,\eps))$, the previous inequality also holds. So $\delta_x \notin \ol{co}(\mathscr C \setminus B(\delta_x,\eps))$.
\end{proof}
This proposition makes clear that Dirac masses are extreme points in this geometry. In fact, they are dents or strongly exposed points. In the general case, i.e. when $\mathscr C$ does not contain any Dirac mass, we shall use a similar argument, namely by trying to select a measure which does not spread mass too much, to prove that a dent exists.\\

We start with a lemma.
\begin{Lemma}\label{lemma:comp}
Let $\mathscr C \subset \mpt$ be a compact set. Then, for any integer $k > \frac d2$, there exists a dense $G_\delta$ set $\mathcal A \subset H^k(\R^d)$ such that for any $\phi \in \mcl A$, $µ \mapsto \langle \phi,µ\rangle$ has a unique point of minimum on $\mathscr C$.
\end{Lemma}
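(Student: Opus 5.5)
Because $k > d/2$, the Sobolev embedding makes every $\phi \in H^k(\R^d)$ bounded and continuous with $\|\phi\|_\infty \le C\|\phi\|_{H^k}$. Hence $\langle \phi,\mu\rangle = \int \phi\, d\mu$ is well defined on $\mpt$. It is continuous for $W_2$, since $W_2$-convergence implies weak convergence. For each $\phi$, $\mu \mapsto \langle\phi,\mu\rangle$ therefore attains its minimum on the compact set $\mathscr C$. The plan is the standard Baire-category argument behind genericity of unique minimizers.

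For $n \ge 1$, let $\mathcal A_n$ be the set of $\phi \in H^k(\R^d)$ for which there exists $t\in\R$ such that the slice $\{\mu \in \mathscr C : \langle \phi,\mu\rangle < t\}$ is non-empty and has $W_2$-diameter $< 1/n$. Set $\mathcal A = \bigcap_n \mathcal A_n$.

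If $\phi \in \mathcal A$, then for every $n$ the minimizers lie in a slice of diameter $<1/n$. So the set of minimizers has diameter $0$, and the minimizer is unique. The remaining work is to show each $\mathcal A_n$ is open and dense.

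\emph{Openness.} Take $\phi \in \mathcal A_n$ with slice level $t$, a point $\mu_0$ with $\langle\phi,\mu_0\rangle < t$, and set $\eta = t - \langle\phi,\mu_0\rangle$. Let $\|\psi-\phi\|_{H^k} < \delta$ with $C\delta < \eta/2$. Use the level $t' = t - \eta/2$ for $\psi$. Any $\mu\in\mathscr C$ with $\langle\psi,\mu\rangle < t'$ satisfies $\langle \phi,\mu\rangle < t' + C\delta < t$, so the new slice lies inside the old one and has diameter $<1/n$. It also contains $\mu_0$, since $\langle\psi,\mu_0\rangle < t-\eta + C\delta < t'$. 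So $\psi \in \mathcal A_n$.

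\emph{Density (main step).} Given $\phi$ and $\epsilon>0$, I need $\psi$ with $\|\psi-\phi\|_{H^k}<\epsilon$ having a small slice. Let $\mu^*$ be a minimizer of $\phi$ on $\mathscr C$. The set $K=\{\mu \in \mathscr C: W_2(\mu,\mu^*) \ge 1/(2n)\}$ is compact and does not contain $\mu^*$. The aim is a perturbation $\chi \in H^k$ with $\|\chi\|_{H^k}<\epsilon$ and
\[
\langle \chi,\mu\rangle > \langle\chi,\mu^*\rangle + c \quad \text{for all } \mu \in K
\]
for some $c>0$. Then $\psi = \phi+\chi$ satisfies $\langle \psi,\mu\rangle \ge \langle\psi,\mu^*\rangle + c$ on $K$. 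The slice at level $\langle\psi,\mu^*\rangle + c$ then lies in $B(\mu^*,1/(2n))$, so it has diameter $\le 1/n$; a slightly smaller ball radius makes this strict.

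To build $\chi$, argue by compactness. Smooth compactly supported functions, hence elements of $H^k$, separate probability measures. So for each $\nu\in K$ there is $\chi_\nu \in C_c^\infty$ with $\langle\chi_\nu,\nu-\mu^*\rangle > 0$. By continuity this strict inequality persists on a $W_2$-neighbourhood of $\nu$. A finite subcover gives $\chi_1,\dots,\chi_m$.

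A single linear functional separating $\mu^*$ from all of $K$ is not automatic, and this is the main obstacle. I would first try to fix it with a nonnegative combination. This works if $\mu^*$ is not in the closed convex hull of $K$ in the weak topology. It need not be, so a cleaner route is to change the function class. One could use $\chi(\mu)=\int f\,d\mu$ with $f(x) = \theta(x)\,|x-x_*|^2$ when $\mu^*$ is near a Dirac, mimicking Proposition \ref{prop:dirac}.

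Alternatively, I would first try Stegall/Bourgain-type genericity: every compact set is dentable in any Banach space where it embeds. Embed $\mathscr C$ into the dual of $H^k$, where the injection is continuous from $W_2$ and hence a homeomorphism on the compact set $\mathscr C$. Compact sets in a Banach space have the RNP. Stegall's variational principle (Theorem 5.15 in Phelps) then gives a dense $G_\delta$ of $\phi\in H^k=(H^{-k})'$ attaining a strong minimum on $\overline{co}(\mathscr C)$. A strongly exposed point of $\overline{co}(\mathscr C)$ lies in $\mathscr C$, by Milman's theorem applied to the compact set. The strong minimum with respect to the $H^{-k}$ norm transfers back to a unique minimizer in $\mathscr C$. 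I would adopt this second route as the actual proof.
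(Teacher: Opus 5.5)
Your adopted argument (the Stegall route) is correct, but it works on the dual side compared with the paper's proof.

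\textbf{The paper's route.} The paper uses no RNP statement here. It notes that for the minimizer map $\Psi$, the map $-\Psi$ is cyclically monotone. Rockafellar's theorem then gives a continuous convex $F$ on $H^k(\R^d)$ with $-\Psi\subset\partial F$; in effect $F$ is the support function $F(\phi)=\max_{\mu\in\mathscr C}\langle-\phi,\mu\rangle$. Since $H^k(\R^d)$ is a separable Hilbert space, it is Asplund, so $F$ is Fr\'echet differentiable on a dense $G_\delta$. There $\partial F$ is a singleton, and hence $\Psi$ is single valued.

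\textbf{Your route.} You apply Stegall's principle to $\ol{co}(\mathscr C)\subset H^{-k}$. By \v{S}mulian's lemma, a support function is Fr\'echet differentiable at a functional iff that functional strongly exposes the set. So the two proofs rest on the same fact, read through the duality ``$H^{-k}$ has the RNP'' versus ``$H^k$ is Asplund''. Your version delivers strong exposure directly. The paper's version needs only that $\mathscr C$ sits boundedly and injectively in $H^{-k}$, together with existence of minimizers.

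\textbf{Dispensable steps in your proof.}
\begin{itemize}
\item The RNP of $\ol{co}(\mathscr C)$ holds simply because $H^{-k}$ is a Hilbert space. So you need neither compactness there nor your unproved claim that $\mathscr C\hookrightarrow H^{-k}$ is norm continuous. That claim is true, but it comes from the uniform H\"older bound on the unit ball of $H^k$ combined with a coupling, giving $|\langle\phi,\mu-\nu\rangle|\le C\|\phi\|_{H^k}W_2(\mu,\nu)^\alpha$; the bound $\|\phi\|_\infty\le C\|\phi\|_{H^k}$ alone does not give it.
\item Milman's theorem is unnecessary. A minimizer over $\mathscr C$ exists by compactness and is automatically a minimizer over $\ol{co}(\mathscr C)$, hence it is the unique one.
\end{itemize}

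Your first (slice/Baire) route is not a proof on its own: its density step is exactly what dentability supplies.
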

\begin{Rem}
\begin{itemize}
\item Because $\mathscr C$ is compact, a unique point of minimum or a strongly exposed point of minimum are equivalent (for a lsc function).
\item Recall that since $k > \frac d2$, $H^k(\R^d) \subset \mcl C^\alpha_0(\R^d)$, for some $\alpha \in (0,1)$, the space of H\"older continuous functions which vanish at infinity.
\item For any $\phi \in \mcl A$, the unique minimizer $µ_\phi$ of $µ \mapsto \langle \phi,µ\rangle$ is in fact such that for every $\eps > 0$, $µ_\phi \notin \ol{co}(\mathscr C \setminus B(µ_\phi,\eps))$. This follows, as was the case in Proposition \ref{prop:dirac}, from the linearity (concavity is in fact sufficient) and continuity (upper semi continuity is sufficient) of the map $µ \mapsto \langle \phi,µ\rangle$.
\end{itemize}
\end{Rem}
\begin{proof}
The argument is an immediate adaptation of Lemma 2.1 in \citep{bertucci2023monotone}, that we present for the sake of completeness. Let $\Psi$ be defined by
$$
\ba
\Psi: & H^k(\R^d) \mapsto 2^\mathscr C,\\
& \phi \mapsto \left\{µ \in \mathscr C| \nu \mapsto \langle \phi,\nu\rangle \text{ is minimum on $\mathscr C$ at } µ\right\}.
\ea
$$
Observe that $\mathscr C \subset (H^k(\R^d))'\cong H^k(\R^d)$. The operator $-\Psi$ is cyclical monotone. Indeed, consider a sequence $(µ_i,\phi_i)_{1 \leq i \leq n}$ such that $µ_i \in \Psi(\phi_i)$ for all $1 \leq i \leq n$. Then
$$
\sum_{i =1}^n\langle \phi_{i},µ_{i} - µ_{i-1}\rangle \leq 0,
$$
by optimality of the $µ_i$, where we set $µ_{0} = µ_n$. Hence, we deduce from Rockafellar's Theorem that $-\Psi \subset \partial F$ where $F: H^k(\R^d)\mapsto \R$ is a proper convex function and $\partial F$ its sub-differential. Because $\Psi$ has full domain so has $F$. Furthermore, because of the construction of $F$, we know it is lsc, hence continuous everywhere. Since $H^k(\R^d)$ is a separable Hilbert space, it is an Asplund space. We thus deduce that $F$ is Fr\'echet differentiable on a dense $G_\delta$ set. On this set, $\Psi$ can only be single valued, hence the result.
\end{proof}
We are now ready to prove the main result of this section.

\begin{proof}[Proof of Theorem \ref{thm:1}]
Let $µ_0 \in \mathscr C$. Using de la Vall\'ee-Poussin criterion, we know that there exists a map $\psi: \R^d \mapsto \R$ such that
\be\label{eq:supq}
\lim_{|x|\to \infty}\frac{\psi(x)}{1 + |x|^2} = + \infty,
\ee
$$
\int_{\R^d} \psi(x)µ_0(dx) < \infty.
$$
Without loss of generality, we can always assume that $\psi$ is smooth, radial and such that $\frac{\psi(x)}{|x|^2}$ is non-decreasing in $|x|$. Consider now $K_\psi$ defined by 
$$
K_\psi := \text{argmin}\left\{\int_{\R^d}\psi(x)µ(dx) \bigg | µ \in \mathscr C \right\}.
$$
By Prokhorov's Theorem and the lsc of $µ \mapsto \langle \psi,µ\rangle$, $K_\psi$ is a compact of $\mathscr C$. From Lemma \ref{lemma:comp}, there exists $\xi \in \mathcal C^2_0(\R^d)$ (i.e. $\mcl C^2$ functions converging toward $0$ at infinity) such that $µ \mapsto \langle \xi,µ\rangle$ has a unique (thus strongly exposed) point of minimum on $K_\psi$. Denote this point by $\ol µ$.

Let $\eps >0$, we want to show that there exist $\kappa,\delta > 0$ such that
\be\label{eq:1}
\inf_{µ \in \mathscr C \setminus B(\ol µ,\eps)} \int_{\R^d} (\psi + \delta \xi) dµ \geq \kappa + \int_{\R^d}( \psi + \delta \xi )d\ol µ.
\ee
Assume it is not the case. Then, there exists a sequence $(µ_n)_{n \geq 1}$ valued in $\mathscr C \setminus B(\ol µ,\eps)$ such that for all $n \geq 1$
\be\label{eq:2}
\int_{\R^d} \left(\psi + \frac1n \xi\right) dµ_n \leq \frac{1}{n^2} + \int_{\R^d}\left( \psi + \frac1n \xi \right)d\ol µ.
\ee
In particular, thanks to Prokhorov's Theorem and \eqref{eq:supq}, we know that $(µ_n)_{n \geq 1}$ has a limit point $µ^*$ (for $W_2$), which is necessarily in $\mathscr C \setminus B(\ol µ,\eps)$. Furthermore, $µ^* \in K_\psi$ by passing to the limit in \eqref{eq:2}. Since $W_2(µ^*,\ol µ) \geq \eps$, we deduce from the strong exposure of $\ol µ$ that there exists $\alpha > 0$ such that 
\be\label{eq:3}
\int_{\R^d}\xi dµ^* \geq \alpha + \int_{\R^d} \xi d\ol µ.
\ee
Since $\ol µ \in K_\psi$, we can use $\langle \psi,\ol µ \rangle \leq \langle \psi,µ_n \rangle$ in \eqref{eq:2} to obtain (after a multiplication by $n$)
$$
\int_{\R^d} \xi dµ_n \leq \frac{1}{n} + \int_{\R^d}\xi d\ol µ,
$$
which contradicts \eqref{eq:3} in the limit such that $µ_n \to µ^*$. Hence \eqref{eq:1} holds for some $\delta,\kappa >0$. Observe that, because in general $µ\mapsto \langle \psi ,µ\rangle$ is not upper-semi continuous for $W_2$, we are not able to deduce the result directly from \eqref{eq:1}. To obtain the result, we shall prove that for $\eps,\kappa,\delta > 0$ chosen as above, there exist $M> 0$, such that
\be\label{eq:41}
\inf_{µ \in \mathscr C \setminus B(\ol µ,\eps)} \int_{\R^d} (\psi_M + \delta \xi) dµ \geq \frac\kappa2 + \int_{\R^d}( \psi_M + \delta \xi )\ol dµ,
\ee
where $\psi_M: \R^d \mapsto \R$ is a smooth radial function satisfying 
$$
\psi_M(x) := \begin{cases} \psi(x) \text{ if } |x| \leq M,\\ \frac{\psi(M)}{M^2}|x|^2 \text{ if } |x| \geq M+1,\end{cases}
$$
as well as $\psi_M(x) \geq \frac{\psi(M)}{M^2}|x|^2$ when $|x| \geq M$ and $\R_+ \mapsto \R, \lambda \mapsto \psi_M(\lambda x)$ is smooth and increasing for any $x \in \R^d$. Once again, we argue by contradiction and consider a sequence $(µ_n)_{n \geq 1}$ in $\mathscr C \setminus B(\ol µ,\eps)$ such that for any $n\geq 1$
\be\label{eq:4}
\int_{\R^d} \left(\psi_n + \delta \xi\right) dµ_n \leq \frac\kappa2 + \int_{\R^d}\left( \psi_n + \delta \xi \right)d\ol µ.
\ee
Since $(µ_n)_{n \geq 1}$ is bounded in $\mpt$, it has a weak limit, which we denote by $µ^*$. Extracting a subsequence if necessary, we assume that the whole sequence converges toward $µ^*$. 
Observe that $(\psi_n)_{n \geq 1}$ converges monotonically toward $\psi$, so that $\lim_{n \to \infty} \langle \psi_n,\ol µ\rangle = \langle \psi,\ol µ\rangle$. Remark that
$$
\ba
\int_{\R^d} \psi_n dµ^* \leq \liminf_{m \to \infty} \int_{\R^d}\psi_n dµ_m \leq  \liminf_{m \to \infty} \int_{\R^d}\psi_m dµ_m \leq \int_{\R^d}\psi d \olµ + \frac\kappa2 + \delta \int_{\R^d}\xi(\ol µ-µ^*),
\ea
$$
where we used the lsc of $µ \mapsto \langle \psi_n,µ\rangle$ for the weak topology in the first inequality, that $(\psi_n)_{n \geq 1}$ is a non-decreasing sequence and \eqref{eq:4}.

Hence, thanks to Beppo Levi's Theorem, 
$$
\langle \psi,µ^*\rangle \leq \langle \psi,\ol µ\rangle + \frac\kappa2 + \delta \int_{\R^d}\xi(\olµ-µ^*).
$$
 To conclude, it suffices to remark that $(µ_n)_{n \geq 1}$ converges toward $µ^*$ in $\mpt$, so that $µ^* \in \scr C\setminus B(\ol µ,\eps)$ and obtain a contradiction in \eqref{eq:1}. This convergence in $W_2$ holds since, by construction, for $n \geq M$
$$
\int_{|x|> M} |x|^2µ_n(dx) \leq \frac{M^2}{\psi(M)}\int_{\R^d}\psi_M dµ_n \leq \frac{M^2}{\psi(M)}\int_{\R^d}\psi_n dµ_n \leq \frac{M^2}{\psi(M)}C \underset{M \to \infty}{\longrightarrow} 0.
$$
This uniform integrability of $(µ_n)_{n \geq 1}$ implies that $\int_{\R^d}|x|^2µ_n(dx) \to_{n \to \infty} \int_{\R^d}|x|^2µ^*(dx)$, from which the convergence in $\mpt$ follows. Thus, there exists $M >0$ so that \eqref{eq:41} holds, hence the result since $\ol µ$ satisfies the claim, because $µ \mapsto \langle \psi_M + \delta \xi,µ\rangle $ is usc and concave.
\end{proof}
The main advantage of the previous proof is that the function used to approximately separate $\ol µ$ defines a continuous linear function on $\mpt$. This remark is the core idea to define the linear setting of the next section.

\section{Functional analysis framework}\label{sec:fa}
In this section, we introduce a variant of the Lipschitz free space (which is the predual of the Banach space of Lipschitz functions which vanish at a fixed point, see \citep{lfs}). Our extension will contain $\mpt$ "nicely". In order to do so, we start by working with the Sobolev space $X := W^{2,\infty}(\R^d)$ of functions with a Hessian in $L^\infty$, equipped with the norm
$$
\|\phi\|_X := \sup_{x \in \R^d}\left\{\frac{|\phi(x)|}{1 + |x|^2} \right\} + \|D^2\phi\|_\infty.
$$
We do not detail that $X$ is indeed a Banach space, and we recall that if $\|\phi\|_X$ is finite, then $\nabla_x \phi$ grows at most linearly, with a constant depending only on $\|\phi\|_X$. We introduce the set $E$, which we define by completing the span of Dirac masses for the norm
$$
\|µ\|_E := \sup_{\|\phi\|_X \leq 1} \int_{\R^d}\phi dµ.
$$
The following result is standard.
\begin{Prop}
The space $(E,\|\cdot\|_E)$ is a Banach space and its topological dual is $X$. Furthermore, $\mpt$ is a closed convex subset of $E$ (for $\|\cdot\|_E$).
\end{Prop}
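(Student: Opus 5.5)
The proof has three parts: $E$ is a Banach space with dual $X$; $\mpt$ embeds into $E$; and the image of $\mpt$ is convex and closed. The first point is essentially formal. On $\text{span}\{\delta_x\}$ the quantity $\|\cdot\|_E$ is a seminorm, since $\int \phi\, d(\lambda\mu)=\lambda\int\phi\, d\mu$. It is a genuine norm: for a combination $\sum_i a_i\delta_{x_i}$ with distinct $x_i$, a smooth compactly supported bump equal to $1$ near $x_j$ and vanishing near the other points lies in $X$ and detects $a_j$. Hence the completion $E$ is a Banach space.

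\textbf{Dual is $X$.} Each $\phi\in X$ defines a functional on the span with $|\langle\phi,\mu\rangle|\le \|\phi\|_X\|\mu\|_E$, so it extends uniquely to $E$ with dual norm at most $\|\phi\|_X$. Conversely, given $L\in E'$, I will set $\phi(x):=L(\delta_x)$ and show $\phi\in X$ with $\|\phi\|_X\le 2\|L\|$. Two estimates give this.
\begin{itemize}
\item Growth: $\|\delta_x\|_E\le 1+|x|^2$ by the first term of $\|\cdot\|_X$, so $|\phi(x)|\le\|L\|(1+|x|^2)$.
\item Hessian: $\|\delta_{x+h}+\delta_{x-h}-2\delta_x\|_E\le |h|^2$, because any $\phi$ with $\|D^2\phi\|_\infty\le 1$ has second differences bounded by $|h|^2$. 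Therefore $|\phi(x+h)+\phi(x-h)-2\phi(x)|\le \|L\||h|^2$. Bounded second differences mean $\phi$ is both semiconvex and semiconcave, hence $C^{1,1}$ with $\|D^2\phi\|_\infty\le\|L\|$.
\end{itemize}
Since $L$ and $\phi$ agree on the dense span, $L$ is the functional induced by $\phi$. This identifies $E'$ with $X$ up to an equivalent norm, isometrically if one symmetrizes the two sups.

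\textbf{Embedding of $\mpt$.} For $\mu,\nu\in\mpt$ and an optimal coupling $\gamma$, use the Taylor bound
$$|\phi(x)-\phi(y)|\le|\nabla\phi(y)||x-y|+\tfrac12|x-y|^2\le C\|\phi\|_X\bigl((1+|y|)|x-y|+|x-y|^2\bigr).$$
Integrating against $\gamma$ and applying Cauchy--Schwarz gives
$$\Bigl|\int\phi\, d(\mu-\nu)\Bigr|\le C\|\phi\|_X\bigl((1+M_2(\nu)^{1/2})W_2(\mu,\nu)+W_2^2(\mu,\nu)\bigr).$$
Approximate $\mu$ in $W_2$ by finitely supported measures $\mu_n$. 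The bound shows $(\mu_n)$ is Cauchy in $E$, so it has a limit $e_\mu\in E$ whose action on $X=E'$ is $\phi\mapsto\int\phi\, d\mu$. This limit does not depend on the approximation, and the map $\mu\mapsto e_\mu$ is injective because $C_c^\infty\subset X$ separates measures. The map is affine, so the image is convex. The same estimate shows that $W_2$-convergence implies $E$-convergence.

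\textbf{Closedness, the main obstacle.} Suppose $e_{\mu_n}\to e$ in $E$. Testing against $\phi=|x|^2$ bounds $M_2(\mu_n)$, so the sequence is tight and a subsequence converges weakly to some $\mu\in\mpt$, by lower semicontinuity of $M_2$. Weak convergence is not enough to identify $e$, because $X$ contains quadratically growing functions; this is where the norm convergence has to be used. I will test against smoothed versions $\phi_R$ of $(|x|^2-R^2)_+$, which have $\|\phi_R\|_X\le C$ uniformly in $R$. Since $(e_{\mu_n})$ is Cauchy in $E$, for large $n,m$ we get $\int\phi_R\, d\mu_n\le\int\phi_R\, d\mu_m+\eps$ uniformly in $R$. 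Fixing $m$ and letting $R\to\infty$ yields uniform integrability of second moments along the sequence. Hence $\mu_n\to\mu$ in $W_2$, and by the previous step $e_{\mu_n}\to e_\mu$ in $E$. Therefore $e=e_\mu$, so the image of $\mpt$ is closed.
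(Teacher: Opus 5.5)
Your argument is correct in structure, and for the Banach-space, duality and embedding parts it runs parallel to the paper's. The paper also sets $\phi(x)=L(\delta_x)$ and tests $L$ on $\delta_{x+h}+\delta_{x-h}-2\delta_x$. It then tests on the mixed differences $t^{-1}(\delta_{x+tv}-\delta_x-\delta_{y+tv}+\delta_y)$ to get a Lipschitz gradient directly. You instead invoke the semiconvex/semiconcave characterization of $C^{1,1}$; this needs $\phi$ to be locally bounded to pass from midpoint to genuine semiconvexity, which your growth bound provides. The paper obtains $\mpt\subset E$ by the same Cauchy estimate along finitely supported approximations.

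The real difference is closedness. The paper tests only against $|x|^2$, gets $M_2(\mu_n)\to\langle |x|^2,\bar\mu\rangle$, and concludes by Prokhorov. As written, this does not identify that limit with the second moment of the weak limit, so it does not rule out mass escaping to infinity. Your use of norm convergence, uniformly over a bounded family of test functions living near infinity, supplies exactly the missing uniform integrability, so your route is the more complete one. You also verify that $\|\cdot\|_E$ is a norm and that $\mu\mapsto e_\mu$ is injective and affine (which gives convexity); the paper leaves these points implicit.

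One claim must be fixed: smoothed versions of $(|x|^2-R^2)_+$ do not have $\|\phi_R\|_X$ bounded uniformly in $R$. The gradient of $(|x|^2-R^2)_+$ jumps by $2R$ across $\{|x|=R\}$. Any $C^{1,1}$ function agreeing with it outside an annulus of radial width $a$ therefore has a gradient with Lipschitz constant at least of order $R/a$. In particular, a mollification at fixed scale has $\|D^2\phi_R\|_\infty$ of order $R$.

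Take instead $\phi_R(x)=(|x|-R)_+^2$, the squared distance to the closed ball of radius $R$. Its gradient is $2(x-P_Rx)$, with $P_R$ the projection onto that ball, and this is $2$-Lipschitz. The function vanishes on $\{|x|\le R\}$ and satisfies $\phi_R\le |x|^2$, so $\|\phi_R\|_X\le 3$; moreover $\phi_R(x)\ge \frac14|x|^2$ for $|x|\ge 2R$. With this choice, if $\|e_{\mu_n}-e_{\mu_N}\|_E\le\eps$ for $n\ge N$, then for all $n\ge N$ you get $\int_{|x|\ge 2R}|x|^2\,d\mu_n\le 4\int\phi_R\,d\mu_N+12\eps$. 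The rest of your argument goes through unchanged.
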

\begin{proof}
The space $E$ is clearly a Banach space. Let $L \in E'$ and define $\phi: \R^d \mapsto \R$ by $\phi(x) = L(\delta_x)$. Observe that for any $x,y \in \R^d$,
$$
\phi(x) - \phi(y) = L(\delta_x-\delta_y)\leq \|L\|_{E'} \sup_{\|f\|_X \leq 1} |f(x) - f(y)|.
$$
Hence, $\phi$ is locally Lipschitz continuous. Note that for some $c > 0$ depending only on $d$, $\|\delta_x\|_E \leq c (1+|x|^2)$. We now use this estimate to obtain
$$
\sup_{x \in \R^d} \frac{|\phi(x)|}{1+|x|^2} \leq \sup_{µ\in E\setminus\{0\}} \frac{|L(µ)|}{\|µ\|_E} < \infty.
$$
Evaluating $L$ on $\delta_{x+h}+ \delta_{x-h} -2\delta_x$, for $x,h\in \R^d$, easily implies that $\phi$ is $\mcl C^1$. We can then evaluate $L$ on $t^{-1}(\delta_{x+tv}-\delta_x -(\delta_{y+tv}-\delta_y))$ for $t > 0$, $x,y,v \in \R^d$ leads to
$$
\frac{\phi(x+tv) - \phi(x)}{t} - \frac{\phi(y+tv)-\phi(y)}{t} \leq \|L\|_{E'}|x-y|\,|v|.
$$
Taking the limit $t \to 0$ implies that $\phi \in X$. 

Obviously every $\phi \in X$ can be associated to an element of $L_\phi \in E'$ defined on Dirac masses by $L_\phi(\delta_x) = \phi(x)$, and the associated map $\phi \mapsto L_\phi$ is an embedding. Therefore we have indeed $E' = X$.

We now want to show that $\mpt$ is closed in $E$. Let $(µ_n)_{n \geq 0}$ be valued in $E\cap \mpt$, converging toward $\ol µ$ in $E$. In particular we deduce that for all $\phi \in X$,
$$
\int_{\R^d}\phi \,dµ_n \underset{n \to \infty}{\longrightarrow} \int_{\R^d}\phi \,d\ol µ.
$$
Since $x \mapsto |x|^2$ is in $X$, we deduce that $(µ_n)_{n \geq 0}$ is bounded in $\mpt$ (with convergence of the second order moment). By Prokhorov's Theorem, the previous implies the convergence in $\mathcal P_1(\R^d)$ with convergence of the second moment. Thus the convergence in $\mpt$ holds. 

We finally turn to the proof of $\mpt\subset E$. Let $µ \in \mpt$. It can be obtained as the limit (in $\mpt$) of empirical measures. Let us denote by $(µ_n)_{n \geq 0}$ such a sequence, which is clearly in $E$. Now observe that for all $n, m \geq 0$ and $\gamma$ an optimal coupling for $W_2(µ_n,µ_m)$,
$$
\ba
\|µ_n -µ_m\|_E &= \sup_{\|\phi\|_X \leq 1} \left\{\int_{\R^d}\phi(x) µ_n(dx) - \int_{\R^d}\phi(y) µ_m(dy)\right\}\\
& = \sup_{\|\phi\|_X \leq 1} \left\{\int_{\R^d\times \R^d}\phi(x)- \phi(y)\, \gamma(dx,dy)\right\}\\
&\leq  \int_{\R^d \times \R^d} |x-y| (1 + |x| + |y|) + |x-y|^2 \gamma(dx,dy)\\
& \leq \left(1 + \sqrt{M_2(µ_n)} + \sqrt{M_2(µ_m)}\right)W_2(µ_n,µ_m) + W_2^2(µ_n,µ_m).
\ea
$$
Notably, $(µ_n)_{n \geq 0}$ is a Cauchy sequence in $E$ as well and thus $µ \in E$.
\end{proof}
\begin{Rem}
This construction is classical as it is the one used to consider Lipschitz free spaces \citep{aliaga2026,lfs}, when $X$ is the set of Lipschitz functions and $\|\phi\|_X$ is replaced by 
$$
\sup_{x \in \R^d}\frac{|\nabla_x \phi(x)|}{1+|x|} + \|\nabla_x \phi\|_\infty.
$$
\end{Rem}
In the previous proof was made explicit the fact that the topologies induced by $\| \|_E$ or $W_2(\cdot,\cdot)$ are the same on $\mpt$. Note that there exists $C> 0$ depending only on $d$ such that for any $µ \in \mpt$,
$$
C^{-1}M_2(µ) \leq \|µ\|_E \leq 2 M_2(µ) +  \sqrt{M_2(µ)},
$$
where the second inequality follows from the computation of the previous proof. In particular, we have proven the following.
\begin{Prop}
Let $\mathscr C \subset \mpt$. Then $\mathscr C$ is closed (resp. convex, resp. bounded) in $\mpt$ if and only if it is closed (resp. convex, resp. bounded) in $E$.
\end{Prop}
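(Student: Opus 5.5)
The proposition has three parts, closedness, convexity and boundedness, and I will treat them separately. Each follows from facts already established: the topologies induced by $\|\cdot\|_E$ and $W_2$ coincide on $\mpt$, and the two-sided estimate $C^{-1}M_2(µ) \leq \|µ\|_E \leq 2M_2(µ) + \sqrt{M_2(µ)}$ holds for $µ\in\mpt$.

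\textbf{Convexity.} This part is immediate, because both notions refer to the same operation. The embedding of $\mpt$ into $E$ identifies a measure with the linear functional $\phi\mapsto\int\phi\,dµ$ on $X$. Hence the convex combination $(1-t)µ+t\nu$ taken in $E$ is the measure $(1-t)µ+t\nu$. Convexity of $\mathscr C$ in $\mpt$ (in the flat sense of the Preliminaries) and in $E$ are therefore literally the same condition.

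\textbf{Boundedness.} A set $\mathscr C\subset\mpt$ is bounded for $W_2$ if and only if $\sup_{µ\in\mathscr C}M_2(µ)<\infty$. This uses $W_2(µ,\delta_0)^2 = M_2(µ)$ and the triangle inequality. The two-sided estimate then gives that $\sup_{\mathscr C}M_2$ is finite if and only if $\sup_{\mathscr C}\|µ\|_E$ is finite. So boundedness in $\mpt$ and boundedness in $E$ are equivalent.

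\textbf{Closedness.} This is the only point needing a little care, since "closed in $E$" refers to the ambient space, while "closed in $\mpt$" is relative to $\mpt$.
\begin{itemize}
\item Suppose $\mathscr C$ is closed in $E$. Then it is closed in the relative topology of $\mpt$. That relative topology is the $W_2$ topology, because the previous proof showed that convergence in $\|\cdot\|_E$ within $\mpt$ is equivalent to $W_2$ convergence.
\item Suppose $\mathscr C$ is closed in $\mpt$. Take $µ_n\in\mathscr C$ with $µ_n\to\ol µ$ in $E$. Since $\mpt$ is closed in $E$, we get $\ol µ\in\mpt$. By the equivalence of topologies, $W_2(µ_n,\ol µ)\to0$, and closedness of $\mathscr C$ in $\mpt$ gives $\ol µ\in\mathscr C$.
\end{itemize}
Since $E$ is a metric space, sequential closedness suffices. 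The one step worth double-checking is the equivalence of topologies in both directions. The Lipschitz-type bound in the previous proof gives that $W_2$ convergence implies $E$ convergence. The moment and Prokhorov argument gives that $E$ convergence implies $W_2$ convergence, because testing against $|x|^2\in X$ and against bounded smooth functions in $X$ yields narrow convergence together with convergence of second moments.
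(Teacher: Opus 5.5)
Your proposal is correct and follows the paper's own argument, written out in more detail: the paper obtains the proposition in one line from the coincidence of the $\|\cdot\|_E$ and $W_2$ topologies on $\mpt$ (together with the closedness of $\mpt$ in $E$) and from the two-sided comparison between $\|\mu\|_E$ and $M_2(\mu)$. One small caveat, inherited from the paper's text: the upper bound $\|\mu\|_E\le 2M_2(\mu)+\sqrt{M_2(\mu)}$ fails at $\mu=\delta_0$ (test $\phi\equiv1$, which has $\|\phi\|_X=1$), and is really what the earlier computation gives for $\|\mu-\delta_0\|_E$; however, the trivial estimate $\|\mu\|_E\le 1+M_2(\mu)$, which follows from $|\phi(x)|\le\|\phi\|_X(1+|x|^2)$, is all your boundedness argument needs.
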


The set $E$ is much larger than the set of measures, as the next result highlights.
\begin{Prop}\label{prop:exT}
For any $\rho \in L^\infty$ with compact support, the distribution 
$$
\mathcal T_\rho: \phi \mapsto \int_{\R^d}\Delta \phi(x) \rho(x)dx
$$
belongs to $E$.
\end{Prop}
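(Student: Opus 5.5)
The plan is to write $\mathcal T_\rho$ as a norm-limit in $E$ of finite combinations of Dirac masses, i.e. as a Bochner-type integral of second differences of Diracs. The elementary building block is, for $x,h\in\R^d$,
$$
D^2_{h}\delta_x := \delta_{x+h}+\delta_{x-h}-2\delta_x .
$$
For $\|\phi\|_X\le 1$, Taylor's formula gives $|\langle \phi, D^2_h\delta_x\rangle| = |\phi(x+h)+\phi(x-h)-2\phi(x)|\le |h|^2\|D^2\phi\|_\infty\le |h|^2$. Hence $\|D^2_h\delta_x\|_E\le |h|^2$, uniformly in $x$.

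Fix an orthonormal basis $(e_i)$. For $t>0$, set
$$
\mcl T^t_\rho := t^{-2}\sum_{i=1}^d\int_{\R^d}\rho(x)\, D^2_{te_i}\delta_x\,dx .
$$
This is a Bochner integral in $E$. The integrand is continuous in $x$ for $\|\cdot\|_E$, because $\|\delta_x-\delta_y\|_E\le C|x-y|(1+|x|+|y|)+|x-y|^2$ by the computation in the previous proof. It is also bounded and compactly supported, so $\mcl T^t_\rho\in E$. One can equally approximate it by Riemann sums, which are finite combinations of Diracs. Its action on $\phi\in X$ is
$$
\langle \phi,\mcl T^t_\rho\rangle=\int\rho(x)\sum_i t^{-2}\big(\phi(x+te_i)+\phi(x-te_i)-2\phi(x)\big)\,dx .
$$

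It remains to show that $(\mcl T^t_\rho)_t$ is Cauchy in $E$ as $t\to0$, with limit acting as $\mcl T_\rho$. Here I would move the differences onto $\rho$ rather than $\phi$. By a change of variables, the pairing equals $\int \phi(x)\,\Delta_t\rho(x)\,dx$, with $\Delta_t\rho$ the discrete Laplacian of $\rho$. Alternatively, write $t^{-2}(\phi(x+te_i)+\phi(x-te_i)-2\phi(x))=\int_{-1}^1(1-|s|)\,\partial_{ii}\phi(x+ste_i)\,ds$. Then
$$
\langle \phi,\mcl T^t_\rho-\mcl T^{t'}_\rho\rangle=\sum_i\int \partial_{ii}\phi(y)\int_{-1}^1(1-|s|)\big(\rho(y-ste_i)-\rho(y-st'e_i)\big)\,ds\,dy .
$$
This is bounded by $\|D^2\phi\|_\infty\sum_i\sup_{|s|\le1}\|\rho(\cdot-ste_i)-\rho(\cdot-st'e_i)\|_{L^1}$. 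That quantity tends to $0$ by continuity of translations in $L^1$, since $\rho\in L^\infty$ with compact support lies in $L^1$. Taking the supremum over $\|\phi\|_X\le1$ gives the Cauchy property. The same formula with $t\to0$ shows that the limit $\mcl T\in E$ satisfies $\langle\phi,\mcl T\rangle=\int\Delta\phi\,\rho$ for all $\phi\in X$. Since $E'=X$ separates the points of $E$, $\mcl T$ is identified with $\mcl T_\rho$.

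The main obstacle is exactly this Cauchy estimate. The naive bound $\|\mcl T^t_\rho\|_E\le d\|\rho\|_{L^1}$ only gives boundedness, not convergence, because $D^2\phi$ is merely $L^\infty$ and not continuous. The step works only because we use the $L^1$-continuity of translations of $\rho$ instead of any regularity of $\phi$. This also explains why this object lies in $E$ but is not a measure: it pairs with $D^2\phi$ against an $L^1$ density.
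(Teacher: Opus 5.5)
Your proof is correct and follows essentially the same route as the paper: both realize $\mathcal T_\rho$ as the $E$-limit, uniformly over $\|\phi\|_X\le 1$, of the functionals $\phi\mapsto\int\phi\,\Delta_h\rho=\int\Delta_h\phi\,\rho$. Both show these lie in $E$ at fixed $h$ (the paper by a Dirac approximation using the uniform Lipschitz bound on the support, you by a Bochner integral of $x\mapsto\rho(x)D^2_{he_i}\delta_x$), and both get the $h\to 0$ convergence from $L^1$-continuity of translations of $\rho$, which you make explicit through the integral form of the second difference. One cosmetic fix: the integrand $x\mapsto\rho(x)D^2_{te_i}\delta_x$ is not continuous when $\rho$ is merely $L^\infty$, but it is bounded and strongly measurable (a bounded measurable scalar times a norm-continuous $E$-valued map), which is all the Bochner integral needs; likewise the Taylor identity with $\partial_{ii}\phi\in L^\infty$ holds only for a.e.\ $x$, which suffices once you integrate against $\rho$.
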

\begin{proof}
For $h > 0$ and $\phi \in X$, set 
$$
\Delta_h \phi(x) := \frac{1}{h^2}\sum_{i=1}^d(\phi(x+he_i) + \phi(x - h e_i) - 2 \phi(x)).
$$
Because of the continuity in $h$ of the translation by $h e_i$ for functions in $L^1$, it is a simple analysis exercise to verify that for every $g \in L^1$,
$$
\lim_{h \to 0}\sup_{\|\phi\|_X\leq 1}\left |\int_{\R^d}\Delta_h \phi g - \int_{\R^d} \Delta \phi g \right| = 0.
$$
Because $\rho$ has a support bounded by some $R> 0$, there exists some $C >0$, such that any $\phi \in X$ such that $\|\phi\|_X \leq 1$ is $C$ Lipschitz on the support of $\rho$. Thus, using
$$
\int_{\R^d}\Delta_h \phi \rho = \int_{\R^d} \phi \Delta_h \rho,
$$
as well as $\Delta _h \rho \in L^\infty$ with compact support, for any $h > 0$, $\eps > 0$ there exists a linear combinations of Dirac masses, that we call $µ$ such that,
$$
\sup_{\|\phi\|_X \leq 1}\left | \int_{\R^d} \phi \Delta_h \rho - \int_{\R^d}\phi dµ\right | \leq \eps.
$$
(The Lipschitz constant of $\phi$ typically arise in such estimate, but it is here bounded uniformly). Hence, fixing first $h$ sufficiently small, then choosing sufficiently many Dirac masses, we have indeed proven that $T_\rho \in E$.
\end{proof}

We now recall that, given a subset $\mathscr C$ of $E$, a slice of $\mathscr C$ in $E$ is a set of the form
$$
S(\mathscr C,\phi,\alpha) := \{µ \in \mathscr C, \langle \phi, µ\rangle \leq \alpha\},
$$
for $\phi \in E'$ and $\alpha \in \R$. A set $\mathscr C\subset E$ is called dentable if it possesses non-empty slices of arbitrary small diameters. We leave as an exercise to verify that a bounded set $\mathscr C$ is dentable if and only its closed convex hull is dentable.

We now reformulate, in the space $E$, the geometric property obtained in the previous section.
\begin{Prop}\label{prop:dent}
The set $\mpt$ has the RNP (in $E$), as any bounded set of $\mpt$ is dentable in $E$.
\end{Prop}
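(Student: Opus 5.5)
Let $B\subset \mpt$ be bounded and non-empty, and fix $\eps>0$. The plan is to work with $\mathscr C:=\ol{co}(B)$, the closed convex hull taken in $E$. By the exercise recalled above, $B$ is dentable if and only if $\mathscr C$ is. Since $\mpt$ is closed and convex in $E$, $\mathscr C\subset\mpt$. It is bounded in $E$, hence in $\mpt$ by the preceding proposition, and it is closed and convex in $\mpt$. So Theorem \ref{thm:1} applies to $\mathscr C$. Rather than using only its statement, I will reuse its proof, which gives more: the point $\ol µ$ and, for the fixed $\eps$, parameters $M,\delta,\kappa>0$ such that \eqref{eq:41} holds with $\phi:=\psi_M+\delta\xi$.

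The key observation is that $\phi\in X=E'$. Indeed $\psi_M$ is smooth and equals $\frac{\psi(M)}{M^2}|x|^2$ outside $B(0,M+1)$, so it has bounded Hessian and quadratic growth. Also $\xi$ was chosen in $\mcl C^2_0$ from the dense $G_\delta$ set of Lemma \ref{lemma:comp}, applied with $H^k$ for $k$ large. Taking $k>\frac d2+2$, Sobolev embedding gives $\xi\in \mcl C^2$ with bounded derivatives vanishing at infinity, hence $\xi\in X$. If one worries about this choice, I would simply redo Lemma \ref{lemma:comp} with $k$ large; the genericity argument is unchanged.

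Now set $\alpha:=\langle \phi,\ol µ\rangle+\frac\kappa2$ and consider the slice $S(\mathscr C,\phi,\alpha)$. It contains $\ol µ$, so it is non-empty. By \eqref{eq:41}, any $µ\in\mathscr C\setminus B(\ol µ,\eps)$ satisfies $\langle\phi,µ\rangle\geq \alpha$. I will use the slice with $\alpha$ replaced by $\langle\phi,\ol µ\rangle+\frac\kappa4$, so that the inequality becomes strict. Then the slice is contained in $B_{W_2}(\ol µ,\eps)$.

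The last step converts $W_2$-diameter into $E$-diameter, using the estimate from the proof of the framework proposition:
$$\|µ-\nu\|_E\leq \bigl(1+\sqrt{M_2(µ)}+\sqrt{M_2(\nu)}\bigr)W_2(µ,\nu)+W_2^2(µ,\nu).$$
On the bounded set $\mathscr C$ the second moments are bounded by some constant $R$. Hence the slice has $E$-diameter at most $(1+2\sqrt R)2\eps+4\eps^2$, which is arbitrarily small as $\eps\to0$. Thus $\mathscr C$, and therefore $B$, is dentable, and $\mpt$ has the RNP in $E$.

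The main obstacle is the middle step: making sure that the separating functional produced in the proof of Theorem \ref{thm:1} genuinely lies in $E'=X$ and not merely in a space of continuous functions. This is exactly why the truncation $\psi_M$ was introduced there, and why enough regularity of $\xi$ must be secured.
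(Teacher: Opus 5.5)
Your proposal is correct and follows the paper's proof: you run the construction from the proof of Theorem~\ref{thm:1} on the closed convex hull, take the slice defined by $\psi_M+\delta\xi$, and observe that this functional lies in $X=E'$. Your extra steps fill in details the paper leaves implicit: lowering the level to $\kappa/4$ so the separating inequality is strict, taking $k>\frac d2+2$ so that $\xi\in X$, and converting the $W_2$-diameter into an $E$-diameter through the moment estimate.
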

\begin{proof}
Let $\mathscr C$ be a closed bounded convex set of $\mpt$. The proof of Theorem \ref{thm:1} yields that $\mathscr C$ is dentable, thus the result also holds for any bounded set. Indeed, using the notation of this proof, we precisely established that for any $\eps > 0$, we can find $M, \delta >0$ such that
$$
S\left(\mathscr C, \psi_M + \delta \xi, \lambda + \frac\kappa2\right)
$$
has a diameter smaller than $\eps$ and contains $\ol µ$, where $\lambda := \langle \psi_M + \delta \xi, \ol µ\rangle$. It suffices to observe that $\psi_M + \delta \xi \in X$.
\end{proof}
Observe that this property is not valid outside of $\mpt$ in general, as the next result shows.
\begin{Prop}\label{prop:Enon}
The set $E$ does not have the RNP.
\end{Prop}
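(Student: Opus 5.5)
The plan is to embed a known non-dentable set, namely the unit ball of $L^1([0,1])$, isometrically or at least isomorphically into $E$. Since the RNP passes to closed subsets, and dentability is preserved under isomorphic embeddings (slices pull back to slices, diameters are distorted by bounded factors), a bounded non-dentable subset of $E$ shows that $E$ fails the RNP. Proposition \ref{prop:exT} suggests the natural map: send $\rho \in L^\infty$ with compact support to the distribution $\mathcal T_\rho$, that is $\phi \mapsto \int \Delta\phi\,\rho$. Its norm is
$$
\|\mathcal T_\rho\|_E = \sup_{\|\phi\|_X\leq 1}\int_{\R^d}\Delta\phi\,\rho\,dx .
$$
Since $\|\phi\|_X\leq 1$ gives $|\Delta \phi|\leq d\,\|D^2\phi\|_\infty \leq d$, we get immediately $\|\mathcal T_\rho\|_E\leq d\|\rho\|_{L^1}$.

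To make this cleanest I will first reduce to $d=1$, or else use one coordinate: take $\rho(x)=g(x_1)\chi(x')$ with $\chi$ a fixed bump in the other variables and test against functions of $x_1$ only, up to a correction. The simplest route is to work with $\mathcal T_\rho: \phi\mapsto \int \partial_{11}\phi\,\rho$. The density argument of Proposition \ref{prop:exT} works equally well with a one-directional second difference, so these distributions also lie in $E$. For the lower bound, I restrict $\rho$ to be supported in the cube $[0,1]^d$. I then choose $\phi(x)=F(x_1)\theta(x)$, where $F''=\mathrm{sign}$-approximation of $g$, $|F''|\leq 1$, and $F(0)=F'(0)=0$, so that $F$ and $F'$ are bounded on $[0,1]$. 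Here $\theta$ is a cutoff equal to $1$ on $[0,1]^d$. This gives $\|\phi\|_X\leq C$ with $C$ independent of $g$, and $\int\partial_{11}\phi\,\rho \geq \|\rho\|_{L^1}-\eps$. Hence $\|\mathcal T_\rho\|_E$ is comparable to $\|\rho\|_{L^1}$ on $L^\infty([0,1]^d)$, and the map extends by density to an isomorphic embedding $T: L^1([0,1]^d)\to E$, since $E$ is complete.

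Next I transfer non-dentability. Let $B=T(\text{unit ball of } L^1)$, which is bounded in $E$. Any slice $S(B,\phi,\alpha)$ with $\phi\in X=E'$ equals $T(S(\text{ball},T^*\phi,\alpha))$, where $T^*\phi=\partial_{11}\phi\in L^\infty$ is a continuous functional on $L^1$. Its diameter is at least $c$ times the diameter of the corresponding slice in $L^1$. Since the unit ball of $L^1([0,1]^d)$ is not dentable (every nonempty slice has diameter bounded below, indeed equal to 2 for the sup-norm functional slices), the same holds for $B$, and $E$ fails the RNP.

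The main obstacle is the lower bound $\|\mathcal T_\rho\|_E\gtrsim\|\rho\|_{L^1}$. The construction of $\phi$ must keep the weighted sup part $|\phi(x)|/(1+|x|^2)$ and the full Hessian, including the cutoff contributions, uniformly bounded. I will handle this by making $\theta$ depend only on a fixed large cube, and by noting that $F$ and $F'$ are bounded by $1$ on $[-2,2]$, which controls all product-rule terms. For $d=1$ no cutoff is even needed, since $|F(x)|\leq x^2/2$ globally if $F''$ is extended by zero outside $[0,1]$.
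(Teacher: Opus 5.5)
Your route differs from the paper's: you want an isomorphic copy of $L^1$ inside $E$, with slices pulled back. The route can be made to work, but your central claim is false for $d\geq 2$. You assert that $\|\mathcal T_\rho\|_E$ is comparable to $\|\rho\|_{L^1}$ on all of $L^\infty([0,1]^d)$, so that $L^1([0,1]^d)$ embeds. Here is a counterexample. Take $\rho(x)=\chi(x)\sin(Nx_2)$ with $\chi\geq 0$ a smooth bump in $(0,1)^d$. Then $\|\rho\|_{L^1}\to\frac{2}{\pi}\|\chi\|_{L^1}$. For $\|\phi\|_X\leq 1$, integrate by parts twice in $x_1$, then twice in $x_2$ using $\sin(Nx_2)=-N^{-2}\partial_{22}\sin(Nx_2)$:
$$
\int_{\R^d}\partial_{11}\phi\,\rho\,dx=\int_{\R^d}\phi\,\partial_{11}\chi\,\sin(Nx_2)\,dx=-\frac{1}{N^2}\int_{\R^d}\partial_{22}\big(\phi\,\partial_{11}\chi\big)\sin(Nx_2)\,dx .
$$
This is $O(N^{-2})$ uniformly in $\phi$, because $\phi$, $\nabla\phi$ and $D^2\phi$ are bounded on $[0,1]^d$ by constants depending only on $\|\phi\|_X$. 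So $\|\mathcal T_\rho\|_E\to 0$ while $\|\rho\|_{L^1}$ stays bounded below. Hence $\rho\mapsto\mathcal T_\rho$ is not bounded below on $L^1([0,1]^d)$, and your slice-diameter transfer yields nothing. The problem is already visible in your test function. $\phi=F(x_1)\theta(x)$ with $F''\approx\mathrm{sign}(g)$ only makes sense when $\rho=g(x_1)\chi(x')$ with $\chi\geq 0$, i.e. when the sign of $\rho$ depends on $x_1$ alone. Forcing $\partial_{11}\phi$ to follow a sign pattern that varies in $x'$ destroys the bound on the other second derivatives.

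The repair is to keep the ansatz you started with and embed only $L^1([0,1])$. Fix $\chi\geq 0$ in the variables $x'$ with $\int\chi=1$, and set $Tg:=\mathcal T_{g\otimes\chi}$. Then $\|Tg\|_E\leq\|g\|_{L^1}$. For the lower bound, take $F(0)=F'(0)=0$, $F''=s$ on $[0,1]$ with $s\in[-1,1]$ approximating $\mathrm{sign}(g)$, and $F''=0$ elsewhere, and test with $\phi(x)=F(x_1)$. This gives $|\phi(x)|\leq |x|^2/2$ and $D^2\phi=F''(x_1)\,e_1\otimes e_1$, hence $\|\phi\|_X\leq\frac32$ in every dimension with no cutoff, and so $\|Tg\|_E\geq\frac23\|g\|_{L^1}$. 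Slices pull back through $T^*\phi(x_1)=\int\partial_{11}\phi(x_1,x')\chi(x')\,dx'\in L^\infty$. Non-dentability of the unit ball of $L^1([0,1])$ then concludes. The same works with the paper's Laplacian version, since $\Delta F(x_1)=F''(x_1)$.

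Compared with the paper: the paper explicitly declines the embedding route. It builds a bounded $2^d$-regular $\delta$-tree $(\mathcal T_{\psi_i})$ from normalized indicators of dyadic cubes. It separates each parent from its child with $f$ solving $\Delta f=\mathbb 1_{B_1}-\mathbb 1_{B_2}$, which needs uniform $X$-bounds on potentials of small balls. Your repaired argument uses only one-variable test functions, whose $X$-norm is trivial to control. It also gives the stronger structural statement that $E$ contains an isomorphic copy of $L^1([0,1])$. The one-directional restriction is exactly what makes the paper's "formal" embedding idea rigorous.
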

\begin{proof}
Formally, we would like to say that $\rho \mapsto \mcl T_\rho$ defined in Proposition \ref{prop:exT} is an isomorphic embedding of $L^1$ into $E$. 
Since $L^1$ does not have the RNP, neither does $E$. This type of reasoning is quite used in the functional analysis literature (see \citep{aliaga2026}) but this route does not look quite correct to us here. 

Instead, we provide a more direct argument which consists in constructing a bounded $2^d$ regular $\delta$-tree inside $E$. Recall that, a $k$-regular tree is a sequence $(x_{i})_{i \geq 1}$ such that for all $i \geq 1$, $x_{i} = \frac1k (x_{ki} + x_{ki+1} +\dots + x_{ki + k-1})$. It is a $\delta$ tree if for all $i$, $0 \leq j \leq k-1$, $\|x_i -x_{ki + j}\|\geq \delta$. Such a set is clearly not dentable because every one of its point is the convex combination of points at distance more than $\delta$.

We consider the unit cube $[0,1]^d$ of $\R^d$ a regular subdivision $(Q_i)_{i \geq 1}$ of it. For $i \geq 1$, define $\psi_i = \frac{1}{|Q_i|}\mathbb 1_{Q_i}$. The family $(\psi_i)_{i \geq 1}$ is a $2^d$ regular tree in $L^1$, thus $(T_{\psi_i})_{i \geq 1}$ is $2^d$ regular tree in $E$. We consider $i \geq 1, 0 \leq j \leq k-1$ and we want to bound from below $\|T_{\psi_i}-T_{\psi_{ki + j}}\|_E$. 

In order to do so, we consider two disjoint balls $B_1$ and $B_2$, included in respectively $Q_{ki + j}$ and $Q_i \setminus Q_{ki + j}$, such that their radii are equal to a fourth of the side of $Q_{ki+j}$.
Let $g$ be defined by $1$ in $B_1$, $-1$ in $B_2$ and $0$ elsewhere. Let $f$ be defined by $\Delta f = g$. We have $f \in X$ and the exact computation of the solution of $\Delta f = \mathbb 1_B$ for a ball $B$ yields that for some $C$ depending only on $d$, $ \|f\|_X \leq C$. Now observe that 
$$
\ba
\|f\|_X\|T_{\psi_i}-T_{\psi_{ki + j}}\|_E &\geq \int_{\R^d}( \psi_i -  \psi_{ki +j})g = \int_{B_1}\frac{1}{|Q_{ki + j}|} - \frac{1}{|Q_{i}|} - \int_{B_2}\frac{1}{|Q_{i}|}.
\ea
$$
The right hand side is bounded by a constant independent of $i$ and $j$, thus $(T_{\psi_i})_{i \geq 1}$ is a $\delta$-tree for some $\delta > 0$. Since for all $i \geq 1, \phi \in X$, $T_{\psi_i}(\phi) \leq \|\phi\|_X$, it follows that this $\delta$-tree is also bounded. Since it is not dentable, it follows that $E$ fails the RNP.
%
\end{proof}

We conclude this section with the following remark.
\begin{Rem}\label{rem:extension}
The regularity restriction in the definition of $X$ could have been strengthened, and the content of this section would have been left unchanged (up to some constants). Indeed, for $k \geq 2$, we could have set $X^k := W^{k,\infty}\cap W^{2,\infty}$. In such a setting, $E$ is replaced by $E^k$, a predual of $X^k$ which we can construct in exactly the same way, and all the results above are immediately adaptable.
\end{Rem}

\section{Perturbed optimization}\label{sec:optim}
We now turn to one of the main consequences of Theorem \ref{thm:1}: a perturbed optimization result on $\mpt$, sometimes also called a variational principle. Perturbed optimization results have a long history, notably because of their importance in the calculus of variation to create points of maximum/minimum. A fundamental result in this field is of course Ekeland's variational principle, which is concerned with Lipschitz perturbations of functions to obtain existence of almost minima. This result is very general and valid in complete metric spaces. In \citep{BP}, Borwein and Preiss obtained a smooth variational principle on certain Banach spaces (smooth meaning that the perturbation is smooth). Here, we produce a variational principle in the spirit of Stegall's Lemma, see \citep{stegall} and Theorem 5.15 in \citep{phelps}, which states that on a Banach space with the RNP, there is a $G_\delta$ dense set of linear perturbations which create strongly exposed points of minimum of bounded lsc functions on bounded sets. On Wasserstein spaces, variational principles were also obtained in \citep{cosso2021master,bayraktar}, and we believe our result to be more general. See also the work of Bertucci and Lions in \citep{nonstegall} which is also concerned with Wasserstein spaces and extensively commented in Part \ref{part:hor}.\\

The following result holds.
\begin{Theorem}\label{thm:stegall}
Let $\mathscr C$ be a closed bounded convex subset of $\mpt$ and let $f: \mathscr C \mapsto \R$ be lsc and bounded from below. Then, for any $k \geq 2$, there is a $G_\delta$ dense subset $A$ of $X^k$ such that for any $\phi \in A$, $µ \mapsto f(µ) + \langle \phi,µ\rangle$ has a strongly exposed point of minimum on $\mathscr C$.
\end{Theorem}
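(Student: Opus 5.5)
The plan is to reproduce the proof of Stegall's variational principle (Theorem 5.15 in \citep{phelps}) in the Banach space $E^k$ of Remark \ref{rem:extension}, whose dual is $X^k$. By Proposition \ref{prop:dent} and Remark \ref{rem:extension}, every bounded subset of $\mpt$ is dentable in $E^k$, so we only need Stegall's argument for a closed bounded convex set with the RNP, not the RNP of the whole space. Topologies and boundedness on $\mpt$ agree with those of $E^k$, so $\mathscr C$ is a closed bounded convex subset of $E^k$ and $f$ is lsc on it.

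First, reduce to a bounded function. Replacing $f$ by $\min(f, c)$ for a large constant $c$ does not preserve minimizers in general. So instead I would work with the epigraph-type set
\[
D := \ol{co}\{(µ, t) \in \mathscr C\times\R : f(µ)\le t \le \inf f + 1 + \text{const}\},
\]
as in Phelps. Alternatively, I would take the set of $\phi$ of small norm and use the fact that $\langle\phi,\cdot\rangle$ is bounded on $\mathscr C$. For $\phi\in X^k$ and $n\geq1$, define
\[
U_n := \left\{\phi \in X^k : \exists \alpha>0,\ \mathrm{diam}\, \{µ\in\mathscr C : f(µ)+\langle\phi,µ\rangle \le \inf_{\mathscr C}(f+\phi)+\alpha\} < 1/n\right\}.
\]
Then $A:=\bigcap_n U_n$, and any $\phi\in A$ gives a strongly exposed minimizer. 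Indeed, the nested sublevel sets are nonempty, closed by lsc, with diameters going to $0$, so by completeness of $\mathscr C$ they intersect in one point, which all minimizing sequences approach.

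Second, show each $U_n$ is open. If $\phi\in U_n$ with slack $\alpha$, then for $\|\phi'-\phi\|_{X^k}<\eta$ we have $|\langle\phi'-\phi,µ\rangle|\le \eta\|µ\|_{E^k}\le \eta R$ on $\mathscr C$, since $\mathscr C$ is bounded in $E^k$. The sublevel set of $f+\phi'$ at level $\alpha/3$ is then contained in the sublevel set of $f+\phi$ at level $\alpha$ once $\eta R<\alpha/3$. This step is routine.

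Third, and this is the main obstacle, show each $U_n$ is dense. Fix $\phi_0$ and $\eta>0$. I would follow Phelps. Consider $g=f+\phi_0$, which is lsc and bounded below on $\mathscr C$; it is also bounded above after truncation on the relevant sublevel set, which requires care. Form the closed convex hull $K\subset E^k\times\R$ of the truncated epigraph $\{(µ,t): µ\in\mathscr C,\ g(µ)\le t\le \inf g+1\}$. This set is bounded. It is also dentable: it sits in $\mpt\times\R$ viewed inside $E^k\times\R$, and the proof of Theorem \ref{thm:1} adapts to products with a compact interval (equivalently, $\mathscr C\times[a,b]$ still has the RNP, since the RNP passes to products and closed convex subsets). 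Dentability gives a slice of $K$ of small diameter. The delicate part is to arrange that the slicing functional $(\phi,s)$ has $s$ with a sign, and with $\phi/s$ close to $0$ in $X^k$, so that the slice projects onto a small sublevel set of $g+\phi/s$. Phelps's device does this by intersecting with the half-space $t\le \inf g + \epsilon$. Then Bourgain's lemma, that slices are dense in the sense that every dent contains a slice with functional near any prescribed one, makes the functional close to $(0,1)$. The statement that dentable sets have small slices defined by functionals arbitrarily close to a given one (Bourgain/Phelps, Lemma 5.8 in \citep{phelps}) is purely Banach-space geometric on a closed bounded convex dentable set. I would invoke it verbatim, using that all closed bounded convex subsets of $K$ are dentable by Proposition \ref{prop:dent}. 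Finally, Baire's theorem in the Banach space $X^k$ gives that $A$ is a dense $G_\delta$.
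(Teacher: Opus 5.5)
Your proposal is correct and follows the paper's route. The paper's proof combines Proposition \ref{prop:dent} (with Remark \ref{rem:extension} for $k>2$) with Stegall's principle, Theorem 5.15 in \citep{phelps}, applied to the RNP set $\mathscr C$ inside $E^k$, whose dual is $X^k$. Your open/dense/Baire sketch via Bourgain's density of small-slice functionals only unpacks that black box, and your initial worry about truncating $f$ is unnecessary, since Stegall's theorem needs only $f$ lsc and bounded below on a bounded set.
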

\begin{proof}
Propositions \ref{prop:dent} and Theorem 5.15 in \citep{phelps} yields the result in the case $k =2$. In the case $k > 2$, we saw in Remark \ref{rem:extension} that Proposition \ref{prop:dent} still holds in $E^k$ so the result also holds in this case.
\end{proof}

Another standard conclusion that we can draw from Theorem \ref{thm:1} is the following stronger form of Krein-Milman Theorem.
\begin{Theorem}
Let $\mathscr C$ be a closed bounded convex subset of $\mpt$, then for any $k \geq 2$, $\mathscr C$ is the closed convex hull of its strongly exposed points by linear functionals of the form $µ \mapsto \langle \phi, µ\rangle$ for $\phi \in X^k$.
\end{Theorem}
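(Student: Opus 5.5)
We deduce the statement from Theorem \ref{thm:stegall} with $f\equiv 0$ and a separation argument in $E^k$, following Phelps' derivation of the strong Krein--Milman property from Stegall's Lemma. Fix $k\geq 2$. Let $\mathscr S$ be the set of points of $\mathscr C$ that are strongly exposed by some functional $µ\mapsto\langle\phi,µ\rangle$ with $\phi\in X^k$, and set $\mathscr D:=\ol{co}(\mathscr S)$, the closure taken in $\mpt$. This is the same as the closure in $E^k$, by the analogue, for $E^k$ (Remark \ref{rem:extension}), of the proposition identifying closed, convex and bounded sets in $\mpt$ and in $E$. Since $\mathscr C$ is closed and convex, $\mathscr D\subset\mathscr C$. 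Also $\mathscr S\neq\emptyset$: apply Theorem \ref{thm:stegall} with $f\equiv0$ (lsc and bounded below) and pick any $\phi$ in the dense $G_\delta$ set $A$. It remains to show $\mathscr C\subset\mathscr D$.

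Suppose there is $µ_0\in\mathscr C\setminus\mathscr D$. Both sets are closed and convex in the Banach space $E^k$, and $\mathscr D$ is bounded. By Hahn--Banach, since $(E^k)'=X^k$, there are $\phi_0\in X^k$ and $\eta>0$ such that
\[
\langle\phi_0,µ_0\rangle\leq \inf_{\nu\in\mathscr D}\langle\phi_0,\nu\rangle-\eta .
\]
Since $\mathscr C$ is bounded in $E^k$, there is $R$ with $\|µ\|_{E^k}\leq R$ for all $µ\in\mathscr C$.

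Next apply Theorem \ref{thm:stegall} with $f\equiv0$. Density of $A$ gives $\phi\in A$ with $\|\phi-\phi_0\|_{X^k}<\eta/(3R)$. Then $µ\mapsto\langle\phi,µ\rangle$ has a strongly exposed point of minimum $\barµ$ on $\mathscr C$, so $\barµ\in\mathscr S\subset\mathscr D$. We now compare values. On one hand,
\[
\langle\phi,\barµ\rangle\geq\langle\phi_0,\barµ\rangle-\eta/3\geq\langle\phi_0,µ_0\rangle+2\eta/3 .
\]
On the other hand,
\[
\langle\phi,µ_0\rangle\leq\langle\phi_0,µ_0\rangle+\eta/3 .
\]
Hence $\langle\phi,µ_0\rangle<\langle\phi,\barµ\rangle$, which contradicts the minimality of $\barµ$ on $\mathscr C$, since $µ_0\in\mathscr C$. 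Therefore $\mathscr C=\mathscr D$.

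The main point needing care is checking that a strongly exposed point of minimum in the sense of Theorem \ref{thm:stegall} is a strongly exposed point of $\mathscr C$ in the sense of the statement. The paper defines a strongly exposed point as a strongly exposed minimum of a linear functional, with minimizing sequences converging in the underlying distance. Here that distance is $W_2$, or equivalently $\|\cdot\|_{E^k}$, since these generate the same topology on $\mpt$. For the general case $k>2$, we also need the $E^k$ versions of the duality $(E^k)'=X^k$ and of the closedness of $\mpt$ in $E^k$, which Remark \ref{rem:extension} asserts.
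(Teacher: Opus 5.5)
Your proof is correct and is essentially the paper's route. The paper simply cites Theorem 5.20 in \citep{phelps}, whose proof is exactly this argument: a Hahn--Banach separation combined with the density of strongly exposing functionals. You obtain that density from Theorem \ref{thm:stegall} with $f\equiv 0$, which rests on Proposition \ref{prop:dent} and Remark \ref{rem:extension} in $E^k$. The details you add are the ones the citation leaves implicit: $W_2$ and $\|\cdot\|_{E^k}$ induce the same topology on $\mpt$, so strong exposure and closures agree in both settings, and $|\langle \psi,\mu\rangle|\leq \|\psi\|_{X^k}\|\mu\|_{E^k}$ controls the perturbation.
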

\begin{proof}
See Theorem 5.20 in \citep{phelps}.
\end{proof}

\section{Radon-Nikodym theorems on $\mpt$}\label{sec:analytic}
In this section, we explain how to obtain a "usual" form of the Radon-Nikodym Theorem on $\mpt$, from Theorem \ref{thm:1}. Before going into the details of vector valued measures, let us insist upon the fact that the link between such theorems and the dentability of bounded sets is already very well understood, see \citep{diestel}. However, contrary to the previous section on perturbed optimization, we were not able to find such results in cases in which the dentability holds only on a certain subset of the Banach space in question, that is why we shall give more details on the proof, even though they are entirely classical. It is also why these proofs are deferred to the Appendix.\\

Let $(\Omega, \mcl A)$ be a measurable space, and $µ$ a finite, non-negative, $\sigma$ additive measure on it. Let $Y$ be a Banach space. A map $G : \mathcal A \mapsto Y$ is a vector measure on $Y$ provided that for any pair of disjoint sets $A_1,A_2$ in $\mcl A$, $G(A_1 \cup A_2) = G(A_1) + G(A_2)$. Such a map is said to be $µ$-continuous if for all $A \in \mcl A$, such that $µ(A) = 0$, it holds that $G(A) = 0$.

A vector measure $G$ is of bounded variation if 
$$
\sup_{\pi \subset \mcl A} \sum_{A \in \pi} \|G(A)\| < \infty,
$$
where the supremum is taken over all countable partitions of $\Omega$.

A function $f : \Omega \mapsto Y$ is called $µ$-measurable if there exists a sequence of simple functions $f_n : (\Omega,\mcl A) \mapsto Y$ such that $\lim_{n \to \infty} \|f_n -f\| = 0$ $µ$-almost everywhere. We denote by $L^1(µ,Y)$ the set of $µ$-measurable functions $f$ such that 
$$
\int_\Omega \|f\| dµ < \infty.
$$
Because of Proposition \ref{prop:Enon}, we do not have a Radon-Nidokym Theorem on $E$ (where $E$ is defined in Section \ref{sec:fa}). On the other hand, there is no vector measure valued in $\mpt$ since $0_E \notin \mpt$. Hence, we concentrate our attention on the cone $\mcl K$ of $E$ generated by $\mpt$. We have the following.
\begin{Prop}
Any bounded $\mathscr C \subset\mcl K$ is dentable in $E$.
\end{Prop}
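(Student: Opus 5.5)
The plan is to reduce to Proposition \ref{prop:dent} by rescaling masses, using the constant function $1 \in X$ as a linear ``mass'' functional. Every $\nu \in \mcl K$ is of the form $t\mu$ with $t \geq 0$ and $\mu \in \mpt$, and $t = \langle 1,\nu\rangle$. For nonnegative $\nu$ we have $\|\nu\|_E \leq \int(1+|x|^2)d\nu$. Conversely, $1$ and $|x|^2$ belong to $X$, so $\langle 1,\nu\rangle$ and $\langle |x|^2,\nu\rangle$ are bounded by $C\|\nu\|_E$. Hence a bounded $\mathscr C \subset \mcl K$ has masses bounded by some $s_0 := \sup_{\mathscr C}\langle 1,\cdot\rangle < \infty$ and $t M_2(\mu)$ bounded uniformly. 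Also $\sup_{\mathscr C} |\langle \phi,\cdot\rangle| \leq B_\phi$ for every $\phi \in X$. If $s_0 = 0$ then $\mathscr C \subset \{0\}$ and there is nothing to prove, so we assume $s_0 > 0$.

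\textbf{Step 1 (the rescaled set).} Fix $\eta \in (0,s_0/2)$, to be sent to $0$ later. Let $D := \{ (s_0/\langle 1,\nu\rangle)\,\nu : \nu \in \mathscr C,\ \langle 1,\nu\rangle \geq s_0 - \eta\}$. Then $D \subset s_0\,\mpt$, and $D$ is bounded because the rescaling factor is at most $2$. Since $\mu \mapsto s_0\mu$ is a linear isometry up to the factor $s_0$, Proposition \ref{prop:dent} shows that $D$ is dentable. More precisely, the proof of Theorem \ref{thm:1} yields $\phi \in X$, a value $\lambda$ and a margin $\kappa > 0$ such that $S(D,\phi,\lambda+\kappa)$ is nonempty and has diameter at most $\eps$. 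Moreover the infimum of $\langle\phi,\cdot\rangle$ over the closure of $D$ is at most $\lambda$, so the threshold has slack $\kappa$.

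\textbf{Step 2 (a single slice of $\mathscr C$).} We encode both constraints (large mass and small $\phi$-value) in one linear functional $\phi_\Lambda := \phi - \Lambda\cdot 1$ with $\Lambda$ large, and consider the slice $\{\nu \in \mathscr C : \langle \phi_\Lambda,\nu\rangle \leq \lambda' - \Lambda s_0\}$ for a suitable $\lambda'$.

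First, if $\nu$ lies in this slice, then $\Lambda(s_0 - \langle 1,\nu\rangle) \leq \lambda' - \langle\phi,\nu\rangle \leq |\lambda'| + B_\phi$. So the mass of $\nu$ is at least $s_0 - O(1/\Lambda) \geq s_0-\eta$ once $\Lambda$ is large.

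Second, the rescaled measure $\tilde\nu := (s_0/\langle 1,\nu\rangle)\nu \in D$ satisfies $\langle \phi,\tilde\nu\rangle \leq \langle\phi,\nu\rangle + O(\eta B_\phi)$, because the rescaling factor lies in $[1,1+2\eta/s_0]$. Combined with $\langle \phi,\nu\rangle \leq \lambda'$, this puts $\tilde\nu$ in $S(D,\phi,\lambda+\kappa)$, provided we take $\lambda' = \lambda + \kappa/2$ and $\eta$ small compared with $\kappa/B_\phi$.

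Third, $\|\nu - \tilde\nu\|_E \leq (2\eta/s_0)\|\nu\|_E = O(\eta)$. The diameter of the slice of $\mathscr C$ is therefore at most $\eps + O(\eta)$.

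\textbf{Step 3 (nonemptiness and the main obstacle).} The delicate point is the order of quantifiers. $\phi$, $\lambda$ and $\kappa$ are produced by dentability of $D$, and $D$ depends on $\eta$, while $\eta$ must be small relative to $\kappa/B_\phi$. To break this circularity, we first fix $\eta_0$ and $D_0$, and obtain $\phi,\lambda,\kappa$ for $D_0$. Then, for any $\eta \leq \eta_0$, the set $D_\eta \subset D_0$ has $S(D_\eta,\phi,\lambda+\kappa)$ of diameter at most $\eps$. It remains to show this slice is nonempty and contains elements coming from $\nu$ with mass arbitrarily close to $s_0$.

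For nonemptiness, pick $\tilde\nu_0 = (s_0/t_0)\nu_0 \in D_0$ with $\langle\phi,\tilde\nu_0\rangle \leq \lambda + \kappa/4$. If $t_0 < s_0$, we would like to replace it by an element of larger mass. For this we use convexity: without loss of generality, passing to $\ol{co}(\mathscr C)$, which is legitimate by the exercise stated before Proposition \ref{prop:dent}, we may assume $\mathscr C$ is convex. We then mix $\nu_0$ with a $\nu_1 \in \mathscr C$ of mass close to $s_0$. A convex combination giving weight close to $1$ to $\nu_1$ raises the mass to near $s_0$, but it changes the $\phi$-value by an amount that need not be small, so mixing alone does not close the gap.

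The cleaner fix is to choose the mass-near-maximal layer first. We apply dentability to $D_{\eta}$ with $\eta$ fixed small, and then choose $\Lambda \gg (B_\phi + |\lambda|)/\eta$, so the masses forced by the slice exceed $s_0-\eta$. The remaining issue is to check that the slice of $\mathscr C$ contains the preimage $\nu_0$ of a point of $S(D_\eta,\phi,\lambda+\kappa/4)$. This requires $\Lambda(s_0 - t_0) \leq \kappa/4$. Since $t_0 \geq s_0 - \eta$ does not give that, we will instead define the slice threshold using $\sup_{\nu\in\mathscr C}\big(\Lambda\langle 1,\nu\rangle - \langle\phi,\nu\rangle\big)$ and argue that near-maximizers of this quantity lie in the rescaled slice. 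I expect this quantifier bookkeeping, rather than any new geometric input, to be the main obstacle.
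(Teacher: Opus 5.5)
Your argument is incomplete at Step 3, and the difficulty there is structural, not bookkeeping. The circularity in $\eta$ is in fact harmless. Any $\nu$ in a slice of $\mathscr C$ by $\phi-\Lambda\cdot 1$, at a level within $O(1)$ of its infimum, satisfies $s_0-\langle 1,\nu\rangle=O(1/\Lambda)$. So with $\eta$ fixed, both the rescaling error on $\langle\phi,\cdot\rangle$ and $\|\nu-\tilde\nu\|_E$ are $O(1/\Lambda)$.

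What fails is nonemptiness. The map $\nu\mapsto s_0\nu/\langle 1,\nu\rangle$ is not affine and forgets the mass. Nothing forces the small slice $S(D_\eta,\phi,\lambda+\kappa)$ to lie over elements of $\mathscr C$ of mass close to $s_0$; it may lie entirely over elements of mass about $s_0-\eta$. Meanwhile, $\phi-\Lambda\cdot 1$ with $\Lambda$ large only sees elements of mass $s_0-O(1/\Lambda)$. Your fixed threshold $\lambda+\kappa/2-\Lambda s_0$ then gives an empty slice.

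Your proposed patch fails too. For a $\beta$-near-maximizer $\nu$ of $\Lambda\langle 1,\cdot\rangle-\langle\phi,\cdot\rangle$, comparison with an element $\nu'$ of mass $t'$ only gives $\langle\phi,\nu\rangle\le\langle\phi,\nu'\rangle+\Lambda(\langle 1,\nu\rangle-t')+\beta$, and $\Lambda(\langle 1,\nu\rangle-t')$ can be of order $\Lambda\eta$.

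A toy case shows this. Take $d=1$, $\mathscr C=co(\{\delta_0,\tfrac12\delta_1\})$ and $\phi(0)>\phi(1)$. Then $D_\eta=\{p\delta_0+(1-p)\delta_1 : \tfrac{1-2\eta}{1-\eta}\le p\le 1\}$. The minimum of $\langle\phi,\cdot\rangle$ on $D_\eta$, hence any small slice by $\phi$, sits at $p=\tfrac{1-2\eta}{1-\eta}$, i.e. over the element of mass $1-\eta$. By contrast, for $\Lambda$ large every slice of $\mathscr C$ by $\phi-\Lambda\cdot 1$ near its infimum sits near $\delta_0$. The rescaling of $\delta_0$ is not in $S(D_\eta,\phi,\lambda+\kappa)$ once $\kappa$ is small. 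So the diameter control you import from $D_\eta$ never applies.

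The paper does not rescale at all. Boundedness in $E$ bounds both the masses and the second moments on $\mathscr C$, so tightness and Prokhorov's theorem apply to these nonnegative measures. The proof of Theorem \ref{thm:1} then goes through with $W_2$ replaced by weak convergence plus convergence of second moments. That proof uses a superquadratic $\psi$, Lemma \ref{lemma:comp} on the compact argmin, the truncation $\psi_M$, and uniform integrability to upgrade weak convergence to convergence in $E$.

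If you want a genuine reduction to Proposition \ref{prop:dent}, replace the rescaling by an affine injective embedding that stores the missing mass at an extra point. Let $S:=\max(s_0,1)$, $\iota(x)=(x,0)$ and $z_0=(0,1)\in\R^{d+1}$, and set $\Phi(\nu):=S^{-1}\big(\iota_\#\nu+(S-\langle 1,\nu\rangle)\delta_{z_0}\big)$.
\begin{itemize}
\item $\Phi(\mathscr C)$ is a bounded subset of $\mathcal P_2(\R^{d+1})$, hence dentable in the space $E$ built over $\R^{d+1}$.
\item Slices pull back to slices: $\langle\hat\phi,\Phi(\nu)\rangle=S^{-1}\langle\hat\phi\circ\iota-\hat\phi(z_0),\nu\rangle+\hat\phi(z_0)$ with $\hat\phi\circ\iota\in X$.
\item Small diameters pull back: testing against $(x,y)\mapsto\phi(x)$ and $(x,y)\mapsto y$ gives $\|\nu-\nu'\|_E\le\tfrac{3S}{2}\|\Phi(\nu)-\Phi(\nu')\|$, the latter norm being that of $E$ over $\R^{d+1}$.
\end{itemize}
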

\begin{proof}
The fact that $\mathscr C$ is bounded in $E$ implies that $\{\int_{\R^d}dµ | µ \in \mathscr C\}$ and $\{M_2(µ) | µ \in \mathscr C\}$ are bounded. Thus Prokhorov's Theorem still applies and all the arguments used in the proof of Theorem \ref{thm:1} remain valid, once one changes the $W_2$ topology by the topology induced by weak convergence plus convergence of the second moments in the set of non-negative measures.
\end{proof}
Thus, $\mcl K$ satisfies the RNP.

\begin{Theorem}\label{thm:RNP}
Let $G: \mcl A \mapsto \mcl K\subset E$ be a $µ$-continuous vector measure of bounded variation. Then, there exists $f \in L^1(µ,\mcl K)$ such that for any $A \in \mcl A$,
$$
G(A) = \int_A f(\omega) µ(d\omega).
$$
\end{Theorem}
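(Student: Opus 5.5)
We follow the classical route from dentability to the Radon-Nikodym theorem (Rieffel / Maynard, as in Diestel--Uhl, Chapter III), using dentability only on bounded subsets of $\mcl K$. Throughout, $|G|$ denotes the variation measure of $G$, which is finite, non-negative, $\sigma$-additive and $µ$-continuous. We also assume $G$ countably additive, which the classical statement requires.

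\textbf{Step 1: reduction to bounded average ranges.} For $A\in\mcl A$ with $µ(A)>0$ we define the average range
$$
AR_G(A)=\left\{\frac{G(B)}{µ(B)} \,\Big|\, B\subset A,\ B\in\mcl A,\ µ(B)>0\right\}\subset \mcl K,
$$
which lies in $\mcl K$ because $\mcl K$ is a cone. By the Radon-Nikodym theorem for scalar measures applied to $|G|\ll µ$, we can exhaust $\Omega$ by countably many sets $\Omega_m=\{m-1\le d|G|/dµ<m\}$. On each $\Omega_m$ we have $\|G(B)\|_E\le |G|(B)\le m\,µ(B)$, so $AR_G(A)$ is a bounded subset of $\mcl K$ for $A\subset\Omega_m$. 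It therefore suffices to build a density on each $\Omega_m$ and glue the pieces; integrability of the glued density follows from $\int\|f\|\,dµ\le |G|(\Omega)$.

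\textbf{Step 2: exhaustion lemma.} Fix $\eps>0$ and $A\subset\Omega_m$ with $µ(A)>0$. We claim there is $B\subset A$ with $µ(B)>0$ and $\mathrm{diam}\, AR_G(B)\le\eps$. By the preceding proposition, $AR_G(A)$ is dentable in $E$, so there exist $\phi\in X$ and $\alpha$ such that the slice $S=\{x\in AR_G(A)\mid \langle\phi,x\rangle\le\alpha\}$ is non-empty with diameter $<\eps$. Pick $x_0=G(B_0)/µ(B_0)\in S$. We then run the standard argument: if every $B\subset B_0$ of positive measure had an average outside the slice, i.e. $\langle\phi,G(B)\rangle>\alpha\,µ(B)$, then the scalar measure $\langle\phi,G\rangle-\alpha µ$ would be strictly positive on all positive-measure subsets of $B_0$, contradicting $\langle\phi,G(B_0)\rangle\le\alpha\,µ(B_0)$. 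Applying Hahn decomposition to $\langle\phi,G\rangle-\alpha µ$ restricted to $B_0$, we take $B$ to be its negative part, which has positive measure. Every average over a subset of $B$ then lies in the slice, so $AR_G(B)\subset S$ has diameter $<\eps$.

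A Zorn or greedy countable exhaustion then produces a countable partition $\pi_\eps$ of $\Omega_m$, up to a null set, into sets $B$ with $\mathrm{diam}\, AR_G(B)\le\eps$. Refining successively, we take $\pi_{1/k}$ to be nested.

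\textbf{Step 3: construction of $f$.} We set $f_k=\sum_{B\in\pi_{1/k}}\frac{G(B)}{µ(B)}\mathbb 1_B$, a countably valued $µ$-measurable function with values in $\mcl K$. Nestedness gives $\|f_k-f_l\|_E\le 1/\min(k,l)$ pointwise, since both values are averages in the same small average range. Hence $(f_k)$ converges uniformly to some $f$, which takes values in $\ol{\mcl K}$. We need $\mcl K$ closed; if it is not, we note that the averages lie in a closed convex bounded set of the form $\{tν\mid t\in[0,m],\ ν\in\mpt\}$, which is closed by the Prokhorov argument used earlier. For any $A$ we have $\|G(A)-\int_A f_k\,dµ\|\le µ(A)/k$, since $G(A\cap B)/µ(A\cap B)$ lies in $AR_G(B)$. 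Letting $k\to\infty$ gives $G(A)=\int_A f\,dµ$, and $f\in L^1(µ,\mcl K)$ because $\|f\|\le m$ on $\Omega_m$.

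\textbf{Main obstacle.} The key point is Step 2's reliance on dentability of bounded subsets of $\mcl K$ only, rather than of all of $E$. This forces the reduction of Step 1 and the closedness of the relevant truncated cones. These are the places where care is needed; the rest of the argument is verbatim classical.
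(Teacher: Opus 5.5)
Your proof is correct and has the same skeleton as the paper's appendix proof. Both localize via the scalar Radon--Nikodym derivative of $|G|$ to sets with bounded average range, use dentability of bounded subsets of $\mcl K$ to find subsets with small average range, exhaust, and pass to the limit of countably valued approximants. The two technical lemmas, however, are carried out differently.

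The paper follows Diestel--Uhl (Lemma V.3.6 and Thm.\ V.3.7). It uses dentability in its ``dent'' form, namely an average with $G(C)/\mu(C)\notin\ol{co}(\mcl O\setminus B(G(C)/\mu(C),\eps))$. It then runs a greedy exhaustion: it peels off subsets $C_n\subset C$ whose averages are $\eps$-far from $G(C)/\mu(C)$, and argues that the remainder keeps positive measure, since otherwise $G(C)/\mu(C)$ would be a limit of convex combinations of far averages. Finally it builds $f_\eps$ from an arbitrary, non-nested exhaustion and gets $L^1$-Cauchyness from the variation bound $|G-F_\eps|(\Omega)\le\eps\mu(\Omega)$.

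You instead keep the slice functional $\phi\in X=E'$ and apply the Hahn decomposition to the scalar measure $\sigma:=\langle\phi,G\rangle-\alpha\mu$ on $B_0$. This produces in one step a set $B$ with $AR_G(B)$ inside the slice. Nesting the partitions then gives pointwise, hence uniform, convergence of $(f_k)$. Your route is shorter and never has to convert a small slice into a dent located at an average, a step the paper leaves implicit. It also yields the density as a uniform limit of countably valued $\mcl K$-valued functions. The paper's route needs only the convex-hull formulation of dentability and no nesting.

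Two small repairs are needed.
\begin{enumerate}
\item If $\sigma(B_0)=0$, the negative Hahn set can be $\mu$-null. In that case $\sigma$ vanishes on all subsets of the positive set, which has measure $\mu(B_0)>0$, so you may take $B$ to be that set.
\item The set $\{t\nu \mid t\in[0,m],\ \nu\in\mpt\}$ is not bounded. The fallback is unnecessary anyway, because $\mcl K$ itself is closed in $E$. If $t_n\nu_n\to L$, then $t_n=\langle 1,t_n\nu_n\rangle\to t$. For $t>0$, use the closedness of $\mpt$ in $E$. For $t=0$, norm-Cauchyness forces uniform integrability of $|x|^2$ under $t_n\nu_n$ (test against $|x|^2(1-\chi(\cdot/R))$, whose $X$-norm is bounded in $R$), so $L=0$.
\end{enumerate}

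Finally, your explicit assumption that $G$ is countably additive is genuinely needed. With the paper's finitely additive definition and its null-set notion of $\mu$-continuity, there is a counterexample: take $\mu=\sum_k 2^{-k}\delta_k$ on $\mathbb N$ and $G(A)=\lambda(A)\delta_0$ with $\lambda$ a purely finitely additive probability. The paper's own proof uses countable additivity implicitly.
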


\section{Martingale convergence theorems}\label{sec:mart1}
In this section, we show, quite similarly to what has been done in the previous section, that the RNP property of Theorem \ref{thm:1} implies a mean convergence theorem for $\mpt$ valued martingales. Martingales are ubiquitous in probability theory, and we are not going to give a detailed presentation of them, or of the importance of their convergence results. We refer to \citep{pisier} for a general presentation of martingales in Banach spaces.

Let $(\Omega, \mcl A, \mathbb P)$ be a probability space. Let $T \subset \R$ be unbounded from above. We write $\lim_t$ for the limit $t \to \infty$ along elements in $T$. Consider a monotone non-decreasing family of sub-$\sigma$-algebra $(B_t)_{t \in T}$ of $\mcl A$, i.e. for any $t_1,t_2 \in T$ such that $t_1 \leq t_2, B_{t_1} \subset B_{t_2}$, and $B_{t_1}$ is a sub-$\sigma$-algebra of $\mcl A$. 

For $B$ a sub-$\sigma$-algebra of $\mcl A$, we denote by $\mathbb E[\cdot|B]$ the conditional expectation operator defined on $L^p(\mathbb P,X)$ for any $p \in [1,\infty)$ and $X$ Banach space, where $L^p(\mathbb P,X)$ is the $L^p$ space of Bochner integrable $X$ valued functions over $\Omega$ (see Theorem V.1.4 in \citep{diestel}).

By definition, for $X$ a Banach space, an $X$-valued martingale $(f_t)_{t \in T}$ is a family in $L^1(\mathbb P,X)$ such that for all $t_1,t_2 \in T, t_1 \leq t_2$,
$$
\mathbb E[f_{t_2}|B_{t_1}] = f_{t_1}.
$$
Such a martingale is said to be $C$ valued if for all $t \in T$, $f_t$ is valued in $C$. We can obtain, as a consequence of Theorem \ref{thm:RNP}, the following result.
\begin{Theorem}\label{thm:charles}
Let $(f_t)_{t \in T}$ be a $\mpt$ valued martingale in $L^1(\mathbb P,E)$ such that
\begin{itemize}
\item $\sup_{t \in T} \|f_t\|_1 < \infty,$
\item $\lim_{\mathbb P(A) \to 0, A \in B_t} \sup_{s \geq t} \int_A \|f_s\|\,d\mathbb P = 0.$
\end{itemize}
Then, there exists $f \in L^1(\mathbb P,\mpt)$ such that $\lim_{t } \|f_t - f\|_1 = 0.$
\end{Theorem}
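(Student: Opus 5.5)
The plan is to follow the classical route from the Radon-Nikodym property to mean convergence of uniformly integrable martingales (Corollary V.2.4 in \citep{diestel}), using Theorem \ref{thm:RNP} for the cone $\mcl K$ in place of the RNP of the whole space $E$.

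\textbf{Step 1: build a vector measure on the tail algebra.} Let $B_\infty$ be the $\sigma$-algebra generated by $\bigcup_t B_t$. For $A \in B_t$, set
$$
G(A) := \lim_s \int_A f_s\, d\mathbb P .
$$
By the martingale property, $\int_A f_s\,d\mathbb P = \int_A f_t\,d\mathbb P$ for all $s \geq t$, so the limit is constant in $s$. Each $\int_A f_t\,d\mathbb P$ is a Bochner integral of a $\mpt$-valued function, hence a nonnegative multiple of an element of $\ol{co}(\mpt)=\mpt$. Therefore $G$ takes values in $\mcl K$ (closedness of $\mpt$ in $E$ is used here). On the algebra $\bigcup_t B_t$, $G$ is finitely additive, and its variation is bounded by $\sup_t \|f_t\|_1$.

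\textbf{Step 2: extend $G$ and apply Theorem \ref{thm:RNP}.} The second hypothesis (uniform integrability) gives $\|G(A)\| \leq \sup_{s\geq t}\int_A \|f_s\|\,d\mathbb P \to 0$ as $\mathbb P(A)\to 0$. So $G$ is $\mathbb P$-continuous and countably additive on the algebra, and it extends to a $\mathbb P$-continuous vector measure of bounded variation on $B_\infty$. The extension still takes values in $\mcl K$, because $\mcl K$ is closed in $E$: it is the cone over the closed set $\mpt$, which is bounded away from $0$ since $\|µ\|_E \geq \langle 1,µ\rangle = 1$. Closedness of the cone needs a short check using the mass functional $\phi \equiv 1 \in X$. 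Theorem \ref{thm:RNP} then provides $f \in L^1(\mathbb P|_{B_\infty},\mcl K)$ with $G(A)=\int_A f\,d\mathbb P$.

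\textbf{Step 3: identify the limit and prove convergence.} By construction $\mathbb E[f|B_t] = f_t$ for every $t$. Since $f_t \in \mpt$, we have $\langle 1, f_t\rangle =1$ almost surely. By the tower property, $\langle 1,f\rangle$ has conditional expectation $1$ for every $t$, and then also on $B_\infty$, which forces $\langle 1,f\rangle=1$ almost surely. Thus $f$ is a.s. a probability measure in $\mcl K$, i.e.\ $f\in L^1(\mathbb P,\mpt)$. Convergence then follows from the standard Bochner argument. Simple $B_t$-measurable functions are dense in $L^1(B_\infty,E)$, so for $\eps>0$ there are $t_0$ and $g$ simple and $B_{t_0}$-measurable with $\|f-g\|_1\leq\eps$. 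Then, for $t\geq t_0$,
$$
\|f_t - f\|_1 \leq \|\mathbb E[f-g|B_t]\|_1 + \|g-f\|_1 \leq 2\eps ,
$$
using that conditional expectation is an $L^1$ contraction.

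\textbf{Main obstacle.} The delicate step is Step 2. One must verify that Theorem \ref{thm:RNP} (or its appendix proof) applies on the sub-$\sigma$-algebra $B_\infty$, and that the extension of $G$ from the algebra really remains $\mcl K$-valued. If closedness of $\mcl K$ is problematic near $0$, I would instead first normalize, working with $G(A)/\mathbb P(A) \in \mpt$, which is closed, bounded and convex, and control it via the bound $\|G(A)\|\le C\,\mathbb P(A)$ inherited from bounded moments.
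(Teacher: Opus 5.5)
Your proposal is correct and follows the paper's proof essentially step by step: the $\mathcal K$-valued vector measure on $\bigcup_t B_t$, its extension to $\sigma(\bigcup_t B_t)$, Theorem~\ref{thm:RNP}, then approximation by $B_{t_0}$-measurable simple functions. The only variation is that you show the limit is $\mathcal{P}_2(\mathbb{R}^d)$-valued through the mass functional $\langle 1,\cdot\rangle$, whereas the paper uses a.s.\ convergence along a subsequence and closedness of $\mathcal{P}_2(\mathbb{R}^d)$; your route is equally valid. One caveat concerns the $\mathcal K$-valuedness of the extension, which the paper simply asserts: a cone over a closed set bounded away from $0$ need not be closed when that set is unbounded (as $\mathcal{P}_2(\mathbb{R}^d)$ is in $E$), so the mass functional alone does not handle limits of $\lambda_n\mu_n$ with $\lambda_n\to 0$. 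Your normalization fallback does settle it, and needs no moment bound: for $\mathbb P(A)>0$, $G(A)/\mathbb P(A)$ is the $E$-limit of $G(A_n)/\mathbb P(A_n)\in\mathcal{P}_2(\mathbb{R}^d)$ for $A_n\in\bigcup_t B_t$ with $\mathbb P(A\triangle A_n)\to 0$.
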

\begin{proof}
The proof follows once again the one from \citep{diestel}. For $A \in \cup_{t \in T} B_t$, we set $F(A) =\lim_{t } \int_A f_t d\mathbb P$. Note that such a limit indeed exists for all such $A$ since the sequence is stationary after a certain time. Furthermore, because $\mathbb P \geq 0$, $F$ is valued in $\mcl K$, the cone generated by $\mpt$ in $E$. It is rather immediate to check that $F$ is of bounded variation, and the uniform integrability implies that it is $\mathbb P$-continuous. We thus use Theorem I.5.2 in \citep{diestel} to consider a vector measure $G$, defined on $\mcl A_0$ the $\sigma$-algebra generated by $\cup_{t \in T} B_t$, which extends $F$. It is of bounded variation and is $\mathbb P$-continuous as well. By the way it is constructed, $G$ is valued in $\ol{co}(F(\cup_{t \in T} B_t)) \subset \mcl K$. Thus, from Theorem \ref{thm:RNP}, $G$ has a Radon-Nikodym derivative $f \in L^1((\Omega, \mcl A_0,\mathbb P),E)$. Then, for all $A \in \cup_{t \in T} B_t$, 
$$
\lim_{t} \int_Af_t d\mathbb P = F(A) = G(A) = \int_Af\,d\mathbb P.
$$
One has $\mathbb E[f|B_t] = f_t$ for all $t$ and $F(A) = \int_A f d\mathbb P$ for all $A \in \cup_{t \in T} B_t$. We want to show that $\lim_t \|f_t - f\|_1 = 0$, which implies the result since $f$ is $\mcl A $ measurable. We now use the fact that $f$ can be approximated in $L^1$ by simple functions to obtain a family $(g_\eps)_{\eps > 0}$, such that for all $\eps > 0$, $g_\eps$ is a simple function on $(\Omega,\mcl A_0,\mathbb P)$ and $\|g_\eps - f\|_1 \leq \eps$. Because $g_\eps$ is a simple function, up to changing slightly $g_\eps$, it can be written as a simple function of a finite number of sets of $\cup_{t \in T} B_t$. In particular, for some $t_0$ depending on $\eps$, it is $B_{t_0}$ measurable. It follows that for $t\geq t_0$
$$
\|f_t-f\|_1 \leq \|f_t - g_\eps\|_1 + \|g_\eps - f\|_1 \leq \|\mathbb E[f-g_\eps|B_{t_0}]\|_1 + \eps \leq 2\eps.
$$
Hence the result follows provided that $f$ is $\mpt$ valued. But, because all the $f_t$ are $\mpt$ valued, the convergence implies that $f$ is necessarily $\mpt$ valued ($\mathbb P$ almost surely) as well. 
\end{proof}

\section{Extensions}\label{sec:extension}
The results above are in fact valid in much more general spaces than $\mpt$. The extension of the results above to the cases we are about to mention are routine checks of the proof we presented that we do not detail. We simply list several extensions.

\subsection*{Extension to $\mpprd$}
\bigskip
First, for any $1 \leq p < \infty$, $\mpprd$ satisfies the RNP, up to the appropriate change of the functional space $E$, notably to take into account the integrability conditions of the measures. For instance, if $p=1$, the set $X$ can be taken as the set of Lipschitz functions with suitable norm, (or smoother functions with at most linear growth). If $p > 2$, then the Hessian of the functions in $X$ cannot be bounded because $x \mapsto |x|^p$ has to be an element of $X$. Bounding the derivatives of order $k$ for $k$ such that $k \geq p$ and changing the first term in the definition of $\|\phi\|_X$ by $\sup_{x \in \R^d} \{\frac{|\phi(x)|}{1 + |x|^p}\}$ is convenient for instance. So is requiring that the Hessian is not bounded but is allowed only a certain growth.

The Radon-Nikodym Theorem and the convergence of uniformly integrable martingales on $\mpprd$ then holds by similar arguments.

All those possible generalizations, in our opinion, argue in favour of stating the RNP property for Wasserstein spaces as follows.
\begin{Theorem}
Let $p \in [1,\infty)$, $\mathscr C$ be a bounded set of $\mpprd$. Then,
\begin{itemize}
\item there exists $\ol µ \in \mathscr C$ such that for all $\eps > 0$, $\ol µ\notin \ol{co}(\mathscr C \setminus B(\ol µ,\eps))$,
\item for any $k \geq 1$, for any $\eps > 0$, there exist $\kappa > 0$ and a $\mcl C^k$ function $\phi : \R^d \mapsto \R$ which grows at most like a multiple of $x \mapsto |x|^p$ such that  
$$
\left\{µ \in \mathscr C \bigg| \langle \phi,µ\rangle \leq \inf_{\nu \in \mathscr C} \langle \phi,\nu\rangle + \kappa\right\}
$$
is non empty with diameter at most $\eps$.
\end{itemize}
\end{Theorem}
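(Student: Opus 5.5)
We will deduce both items from a $p$-version of the proof of Theorem \ref{thm:1}. The set we run that proof on is the closed convex hull $\mathscr K := \ol{co}(\mathscr C)$, taken in $\mpprd$. We first replace $\mathscr C$ by its closure, which the first item seems to tacitly require: a denting point can only be guaranteed in $\ol{\mathscr C}$, and closing $\mathscr C$ does not change the closed convex hulls involved. The set $\mathscr K$ is non-empty, bounded, closed and convex. The functional setting is the one sketched above. We take $X_p^k$ to be the $\mcl C^k$ functions with norm
$$
\sup_{x}\frac{|\phi(x)|}{1+|x|^p}+\sum_{2\le j\le k'}\sup_x\frac{|D^j\phi(x)|}{1+|x|^{(p-j)_+}},
$$
with $k'\ge \max(k,p)$, and we take $E_p^k$ to be the completion of the span of Dirac masses for the dual norm. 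For $p=1$ we use Lipschitz-type control instead. The estimates of Section \ref{sec:fa} then carry over. Each $\phi\in X_p^k$ is $W_p$-continuous on $W_p$-bounded sets, and $\mpprd$ sits in $E_p^k$ as a closed convex set whose norm topology agrees with $W_p$ on bounded sets.

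\emph{Step 1: a small slice of $\mathscr K$.} Pick $\mu_0\in\mathscr K$. De la Vall\'ee-Poussin gives a smooth radial $\psi$ with $\psi(x)/(1+|x|^p)\to\infty$ and $\psi(x)/|x|^p$ non-decreasing in $|x|$, such that $\langle\psi,\mu_0\rangle<\infty$. Prokhorov and lower semicontinuity make $K_\psi=\mathrm{argmin}_{\mathscr K}\langle\psi,\cdot\rangle$ compact in $\mpprd$. Lemma \ref{lemma:comp}, with $H^{m}$ for $m>\frac d2+k$, gives $\xi\in\mcl C^k_0$ with a unique minimizer $\ol\mu$ on $K_\psi$. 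The two contradiction arguments of the proof of Theorem \ref{thm:1} then go through verbatim, with $|x|^2$ replaced by $|x|^p$. The truncations are
$$
\psi_M=\psi \text{ on } \{|x|\le M\},\qquad \psi_M=\frac{\psi(M)}{M^p}|x|^p \text{ on } \{|x|\ge M+1\},
$$
smoothed so that $\psi_M\ge \frac{\psi(M)}{M^p}|x|^p$ for $|x|\ge M$. The uniform integrability bound
$$
\int_{|x|>M}|x|^p d\mu_n\le \frac{M^p}{\psi(M)}C
$$
upgrades weak convergence to $W_p$ convergence. The output is: for every $\eps>0$ there are $M,\delta,\kappa>0$ such that, with $\phi:=\psi_M+\delta\xi$, which is $\mcl C^k$ with growth $O(|x|^p)$, the slice
$$
S:=\Bigl\{\mu\in\mathscr K\;\Big|\;\langle\phi,\mu\rangle<\inf_{\mathscr K}\langle\phi,\cdot\rangle+\kappa\Bigr\}
$$
contains $\ol\mu$ and has diameter at most $\eps$. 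Here we use that $\inf_{\mathscr K}\langle\phi,\cdot\rangle\le\langle\phi,\ol\mu\rangle$. The one genuinely delicate point is constructing $\psi_M$ smooth enough ($\mcl C^k$) while keeping monotone convergence $\psi_M\uparrow\psi$ and the lower bound. We expect this to be the main obstacle, though it is technical rather than conceptual.

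\emph{Step 2: transfer to $\mathscr C$.} The map $\mu\mapsto\langle\phi,\mu\rangle$ is affine and $W_p$-continuous on bounded sets. Hence $\inf_{\mathscr C}\langle\phi,\cdot\rangle=\inf_{\mathscr K}\langle\phi,\cdot\rangle$, since the closed half-space above the infimum over $\mathscr C$ is closed and convex and therefore contains $\mathscr K$. For the same reason $S\cap\mathscr C\neq\emptyset$: otherwise $\mathscr C$, and hence $\mathscr K$, would lie in $\{\langle\phi,\cdot\rangle\ge\inf+\kappa\}$, contradicting $\ol\mu\in S$. Thus the slice of $\mathscr C$ at level $\kappa/2$ (or $\kappa$) is non-empty and contained in $S$, so its diameter is at most $\eps$. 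This proves the second item.

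\emph{Step 3: the denting point.} By Step 2, for every $\eps$ the slice $S$ containing $\ol\mu$ meets $\mathscr C$, so $\ol\mu\in\ol{\mathscr C}=\mathscr C$. Note that the construction of $\ol\mu$ through $\psi$ and $\xi$ does not depend on $\eps$; only $M,\delta,\kappa$ do. Now fix $\eps$ and take the slice $S$ of Step 1 corresponding to $\eps/2$. Then $\mathscr C\setminus B(\ol\mu,\eps)$ is disjoint from $S$, so it lies in the closed convex set $\{\langle\phi,\cdot\rangle\ge\inf+\kappa\}$, and so does its closed convex hull. Since $\langle\phi,\ol\mu\rangle<\inf+\kappa$, we get $\ol\mu\notin\ol{co}(\mathscr C\setminus B(\ol\mu,\eps))$, as in Proposition \ref{prop:dirac}.
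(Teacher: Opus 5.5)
Your route is the paper's. The paper only asserts that this theorem is a routine rerun of the proof of Theorem \ref{thm:1} with $|x|^p$ in place of $|x|^2$, passing through the closed convex hull, and your Steps 1--3 spell that out. Your caveat that the first item needs $\mathscr C$ closed (and non-empty) is correct. For instance, on the open segment $\{(1-t)\delta_0+t\delta_1 : t\in(0,1)\}$ each $\mu_t$ is the midpoint of $\mu_{t\pm\varepsilon^p}$, which lie at $W_p$-distance exactly $\varepsilon$ from it.

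One assertion is unjustified, though the repair is immediate. You claim $\overline\mu\in S$, but $\inf_{\mathscr K}\langle\phi,\cdot\rangle\le\langle\phi,\overline\mu\rangle$ only gives the inclusion $S\subset T:=\{\mu\in\mathscr K : \langle\phi,\mu\rangle<\langle\phi,\overline\mu\rangle+\kappa\}$.

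\begin{itemize}
\item The two contradiction arguments yield $\langle\phi,\mu\rangle\ge\langle\phi,\overline\mu\rangle+\kappa$ for $\mu\in\mathscr K\setminus B(\overline\mu,\varepsilon)$, i.e.\ $T\subset B(\overline\mu,\varepsilon)$.
\item Nothing controls $\langle\phi,\cdot\rangle$ inside the ball. The point $\overline\mu$ minimizes $\langle\psi,\cdot\rangle$, not $\langle\psi_M+\delta\xi,\cdot\rangle$, so $\inf_{\mathscr K}\langle\phi,\cdot\rangle<\langle\phi,\overline\mu\rangle-\kappa$ is not excluded.
\item Hence the last line of Step 3 (``since $\langle\phi,\overline\mu\rangle<\inf+\kappa$'') must use the level $\langle\phi,\overline\mu\rangle+\kappa$ instead, as the paper does in Proposition \ref{prop:dent}. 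The set $\mathscr C\setminus B(\overline\mu,\varepsilon)$ lies in the closed convex set $\{\langle\phi,\cdot\rangle\ge\langle\phi,\overline\mu\rangle+\kappa\}$, which does not contain $\overline\mu$.
\end{itemize}

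Your other uses of $\overline\mu\in S$ survive without it:
\begin{itemize}
\item $S\cap\mathscr C\neq\emptyset$ holds because $\inf_{\mathscr C}=\inf_{\mathscr K}$ and $\kappa>0$. Together with $S\subset B(\overline\mu,\varepsilon)$, this gives $\overline\mu\in\overline{\mathscr C}$.
\item For the second item, the slice of $\mathscr C$ at level $\inf_{\mathscr C}\langle\phi,\cdot\rangle+\kappa/2$ is trivially non-empty and is contained in $T$, so its diameter is at most $2\varepsilon$.
\end{itemize}

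The smoothing of $\psi_M$ that you flag as the main obstacle is harmless. Take $\psi(x)=(1+|x|^2)^{p/2}h(|x|^2)$ with $h\ge 1$ smooth, non-decreasing and $h\to\infty$. Set $\psi_M(x)=(1+|x|^2)^{p/2}\rho_M(h(|x|^2))$, where $\rho_M$ is smooth, non-decreasing and bounded above, with $\rho_M(s)\le s$, $\rho_M(s)=s$ for $s\le h(M^2)$, and $\rho_M$ non-decreasing in $M$. For example, $\rho_M(s)=h(M^2)+\int_0^{s-h(M^2)}\chi$, with $\chi$ smooth, $0\le\chi\le 1$, $\chi=1$ on $(-\infty,0]$ and $\chi=0$ on $[2,\infty)$. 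This gives $\psi_M\uparrow\psi$, $\psi_M\ge h(M^2)|x|^p$ on $\{|x|\ge M\}$, and $O(|x|^p)$ growth.
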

From the previous, it is then classical to obtain the following perturbed optimization result.
\begin{Theorem}\label{thm:mainstegall}
Let $p \in [1,\infty)$, $\mathscr C$ be a bounded closed and convex set of $\mpprd$. Let $f: \mathscr C \mapsto \R$ be a bounded from below and lsc function. Then, for any $k \geq 0$, $\eps > 0$, there exists $\phi \in \mcl C^k$ such that 
\begin{itemize}
\item
$$
\left\| \frac{\phi}{1+|x|^p}\right\|_\infty + \|D^k\phi\|_\infty \leq \eps,
$$
\item the map $µ \mapsto f(µ) + \langle \phi,µ\rangle $ has a strongly exposed point of minimum on $\mathscr C$.
\end{itemize}
\end{Theorem}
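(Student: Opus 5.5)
The plan is to run Stegall's variational principle (Theorem 5.15 in \citep{phelps}) inside a Banach space adapted to $p$ and $k$. We then use the Baire category conclusion to pick a perturbation of arbitrarily small norm.

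\textbf{Step 1: the ambient Banach space.} Fix $p\in[1,\infty)$ and $k\geq 0$. Set $m:=\max(k,\lceil p\rceil,2)$ and let $X_{p,k}$ be the space of $\mcl C^m$ functions $\phi$ for which
$$
\|\phi\|_{X_{p,k}} := \left\|\frac{\phi}{1+|x|^p}\right\|_\infty + \sum_{j=1}^{m}\left\|\frac{D^j\phi}{1+|x|^{(p-j)_+}}\right\|_\infty
$$
is finite. We weight the derivatives so that $x\mapsto |x|^p$ belongs to the space. The norm dominates $\|\phi/(1+|x|^p)\|_\infty+\|D^k\phi\|_\infty$, possibly up to a constant, and this is all that is needed. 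As in Section \ref{sec:fa}, define $E_{p,k}$ as the completion of the span of Dirac masses for the dual norm. Then $E_{p,k}'=X_{p,k}$, and $\mpprd$ is a closed convex subset of $E_{p,k}$ whose $E_{p,k}$-topology coincides with $W_p$.

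The closedness is proved exactly as for $p=2$: since $|x|^p\in X_{p,k}$, convergence in $E_{p,k}$ gives convergence of $p$-th moments, which together with Prokhorov gives $W_p$ convergence. The continuity estimate is obtained by integrating $\phi(x)-\phi(y)$ against an optimal coupling and bounding $|\nabla\phi|$ by $C(1+|x|^{p-1})$, then applying H\"older. Consequently closed, bounded and convex sets of $\mpprd$ are the same in both senses.

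\textbf{Step 2: dentability.} We show that every bounded subset of $\mpprd$ is dentable in $E_{p,k}$. This is the first bullet of the previous theorem, and it reruns the proof of Theorem \ref{thm:1} with $|x|^2$ replaced by $|x|^p$.

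By de la Vall\'ee-Poussin, choose a smooth radial $\psi$ with $\psi/(1+|x|^p)\to\infty$ and $\psi/|x|^p$ nondecreasing. Take the compact argmin set $K_\psi$. Apply Lemma \ref{lemma:comp} with $H^{k'}$ for $k'$ large, so that by Sobolev embedding the selected $\xi$ is $\mcl C^m$ with bounded derivatives and lies in $X_{p,k}$. Finally truncate $\psi$ into $\psi_M$, equal to $\psi$ for $|x|\le M$ and to $\frac{\psi(M)}{M^p}|x|^p$ for $|x|\ge M+1$, so that $\psi_M+\delta\xi\in X_{p,k}$.

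The two contradiction arguments carry over verbatim. Uniform integrability of $|x|^p$ replaces that of $|x|^2$, and lower semicontinuity of $\mu\mapsto\langle\psi_n,\mu\rangle$ under weak convergence is unchanged. This yields a slice of $\mathscr C$ of diameter less than $\eps$, so $\mpprd$ has the RNP in $E_{p,k}$.

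\textbf{Step 3: Stegall's principle.} $\mathscr C$ is a closed bounded convex subset of $E_{p,k}$ with the RNP, and $f$ is lsc and bounded below on it. Theorem 5.15 in \citep{phelps} then gives a dense $G_\delta$ set $A\subset X_{p,k}=E_{p,k}'$ such that, for every $\phi\in A$, $f+\langle\phi,\cdot\rangle$ has a strongly exposed point of minimum on $\mathscr C$.

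Strong exposure there is measured in $\|\cdot\|_E$. Because the two topologies agree on $\mpprd$, minimizing sequences converging in $E_{p,k}$ also converge in $W_p$, and conversely. Density of $A$ lets us pick $\phi\in A$ with $\|\phi\|_{X_{p,k}}\le\eps/C$, which gives both bullets.

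The main obstacle is Step 2. We must check that the truncation $\psi_M$ and the generic $\xi$ lie in a space controlling all derivatives up to order $k$ while still containing $|x|^p$. We must also check that, when $p<2$ or $p$ is non-integer, the singularity of $|x|^p$ at the origin is harmless: we smooth $\psi_M$ near $0$, which only changes it on a compact set. Finally, the $W_p$-continuity estimate of Step 1 must hold for the weighted norm, in particular for $p=1$, where the only control is on gradients bounded by a constant.
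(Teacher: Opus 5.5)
Your route is the one the paper intends; it only says the result is classical once dentability is known. You build a predual $E_{p,k}$ in which $\mpprd$ sits as a closed convex set carrying the $W_p$ topology. You get dentability by rerunning the proof of Theorem \ref{thm:1} with $|x|^p$ in place of $|x|^2$. You then apply Phelps' Theorem 5.15 and pick a small element of the dense $G_\delta$.

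The gap is the sentence in Step 1 claiming that your norm dominates $\|\phi/(1+|x|^p)\|_\infty+\|D^k\phi\|_\infty$. This holds only when $k\geq p$. For $1\leq k<p$ your weight on $D^k\phi$ is $1+|x|^{p-k}$, which is unbounded. For $k=0$, nothing in your norm controls $\|\phi\|_\infty$. No choice of space can repair this, because the statement itself is false for $k<p$.

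Here is a counterexample. Take $d=1$, $p=2$ and $\nu_n:=(1-n^{-2})\delta_0+n^{-2}\delta_n$ for $n\geq 1$. Let $\mathscr C:=\{\sum_n\lambda_n\nu_n:\ \lambda_n\geq 0,\ \sum_n\lambda_n=1\}$, which is $W_2$-closed, convex, and has $M_2\equiv 1$. Let $f(\mu):=\int|x|\,\mu(dx)$. If $\|\phi'\|_\infty\leq\eps<1$, then for $\mu=\sum_n\lambda_n\nu_n$,
\[
f(\mu)+\langle\phi,\mu\rangle-\phi(0)=\sum_n\lambda_n\left(\frac1n+\frac{\phi(n)-\phi(0)}{n^2}\right)\geq(1-\eps)\sum_n\frac{\lambda_n}{n}>0 .
\]
This quantity tends to $0$ along $\nu_N$, so there is no minimizer at all. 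The case $k=0$ follows the same way for $\eps<1/2$, using $|\phi(n)-\phi(0)|\leq 2\eps$. For general $p$ and $k<p$, replace $n^{-2}$ by $n^{-p}$ and $f$ by $\int|x|^k\,d\mu$ (by $\int\min(1,|x|)\,d\mu$ if $k=0$). The first bullet forces $|\phi(n)-\phi(0)|\leq C\eps n^k$, and the same computation applies.

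So your argument proves the theorem exactly for $k\geq p$. That is also the range in which the paper's extension paragraph builds its spaces, since it bounds derivatives of order $k\geq p$. You should state this restriction rather than claim domination for all $k$.

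A smaller, fixable point: $E_{p,k}'$ is not your space of $\mathcal C^m$ functions. It is the larger space in which $D^{m-1}\phi$ is (weighted) Lipschitz, because pointwise limits of mollifications already define bounded functionals on the span of Dirac masses. This is why the paper takes $X=W^{2,\infty}$. Stegall's dense $G_\delta$ lives in that dual and may miss the closed subspace of $\mathcal C^m$ functions entirely. Take $m\geq k+1$ so that the selected $\phi$ is guaranteed to be $\mathcal C^k$.
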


\subsection*{Other spaces than $\R^d$}
The precise nature of $\R^d$ as a base space for the Wasserstein space $\mpt$ was only scarcely used in the previous analysis. Notably, it only came into play when using the following properties of functional spaces defined on $\R^d$: notions of smooth functions can be defined, Sobolev spaces with integrability conditions $p = 2$ are Hilbert spaces, usual functional spaces $\mathcal C^k$ are Banach spaces when equipped with norms which bound (and measure) the growth of the functions. Such results are true in much more general settings, and we do not enter into the question of stating the most general base space possible for our Wasserstein space. We simply note that smooth manifolds with bounded geometry could have been treated by using exactly the same arguments and also that any closure of a smooth domain of $\R^d$ would have raised no issue. In particular, non-connected domains are treated in the same manner. We insist upon this last extension as it shows that the current (vertical) geometry does not see a lot the geometry of the base space, which is a huge difference compared with the horizontal geometry studied in the next part.

\bigskip
\bigskip
\part{Horizontal geometry}\label{part:hor}
In this part, we consider only the case in which the Wasserstein spaces are defined on $\R^d$. We only indicate here that, for $p \in [1,\infty)$, and $\mo$ the closure of a convex domain of $\R^d$, $\mppo$ is a closed coupling convex subset of $\mpprd$, and thus suitable for adaptation of most of what follows.\\

The main difficulty that we are facing in this part is that we will not be able to use a nice linear structure that our Wasserstein spaces would inherit from some larger Banach spaces, as we did above. We shall work without a linear structure, which makes less natural the definitions of strongly exposed points, of regularity, or of martingale. Also, since we work without a linear structure, defining $\mpprd$-valued vector measures is not natural, so we shall not address the question of analytic forms of the Radon-Nikodym Theorem.\\

Recall that the space $\mathbb L^p$ defined above satisfies the RNP if and only if $p \in (1,\infty)$ and that the notion of coupling convexity is defined in the Preliminaries. We shall argue in this part by lifting $\mpprd$ to $\mathbb L^p$ and by pulling down properties of the linear lifted structure to the Wasserstein spaces. This idea of Lions was also notably used for perturbed optimization by Bertucci and Lions \citep{nonstegall} and for defining a tangent space by Bertucci \citep{bertucci2025tangent}.

\section{Perturbed optimization}
The main idea of this section is the following: $\mpprd$ can be viewed as a quotient of $\mathbb L^p$ for the equivalence relation: "having the same law", and since for $p\in (1,\infty)$ $\mathbb L^p$ has the RNP, we can leverage it at the level of $\mpprd$.\\

In \citep{nonstegall}, Bertucci and Lions established the following non-linear version of Stegall's Lemma, with the exception of the fact that the set $A$ was only proven to be non-empty.
\begin{Theorem}\label{thm:nonstegall}
Let $p \in (1,\infty)$, $\scr C\subset \mpprd$ be a coupling convex, closed and bounded subset of $\mpprd$. Let $f: \scr C \mapsto \R$ be lsc and bounded from below. Then, there exists a $G_\delta$-dense set $A$ of $ \mcl P_{p'}(\R^d)$ such that for $\nu \in A$, $µ \mapsto f(µ) + \mcl I_p(µ,\nu)$ has a strongly exposed point of minimum in $\mathscr C$.
\end{Theorem}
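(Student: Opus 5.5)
We lift everything to $\mathbb L^p$ and apply Stegall's Lemma there, since $\mathbb L^p$ has the RNP for $p\in(1,\infty)$. Set $\hat{\scr C} := \{X \in \mathbb L^p \,|\, \mcl L(X) \in \scr C\}$. Coupling convexity of $\scr C$ makes $\hat{\scr C}$ convex: the law of $(1-t)X+tY$ is $((1-t)\pi_1+t\pi_2)_\#\gamma$ with $\gamma = \mcl L(X,Y)$. Since $\|X\|_p^p = M_p(\mcl L(X))$, the set $\hat{\scr C}$ is bounded. It is closed because $W_p(\mcl L(X),\mcl L(Y)) \leq \|X-Y\|_p$ and $\scr C$ is closed. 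The lifted function $\hat f(X) := f(\mcl L(X))$ is lsc on $\hat{\scr C}$ for the same reason, and it is bounded from below. Theorem 5.15 in \citep{phelps}, applied in $\mathbb L^p$ (whose dual is $\mathbb L^{p'}$), then gives a dense $G_\delta$ set $\hat A\subset \mathbb L^{p'}$. For each $Y \in \hat A$, the map $X\mapsto \hat f(X) - \mathbb E[X\cdot Y]$ has a strongly exposed minimum $X_Y$ on $\hat{\scr C}$. We work with the sign $-\mathbb E[X\cdot Y]$ to match $\mcl I_p$, which is harmless because $\hat A$ can be replaced by $-\hat A$.

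Next we pull this down to $\mpprd$. By Ryff-type rearrangement on the atomless space $[0,1]$, for fixed $Y$ with law $\nu$ we have $\inf_{\mcl L(X)=\mu} -\mathbb E[X\cdot Y] = \mcl I_p(\mu,\nu)$. Hence
$$\inf_{X\in\hat{\scr C}} \bigl\{\hat f(X) - \mathbb E[X\cdot Y]\bigr\} = \inf_{\mu\in\scr C} \bigl\{f(\mu)+\mcl I_p(\mu,\nu)\bigr\},$$
and $\mu_\nu := \mcl L(X_Y)$ attains this infimum. To get strong exposure, take a minimizing sequence $\mu_n$ on $\scr C$. We pick $X_n$ with $\mcl L(X_n)=\mu_n$ and $-\mathbb E[X_n\cdot Y] \leq \mcl I_p(\mu_n,\nu)+1/n$; this is possible because the infimum over rearrangements is approached. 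Then $(X_n)$ is minimizing on $\hat{\scr C}$, so $X_n \to X_Y$ in $\mathbb L^p$, and therefore $W_p(\mu_n,\mu_\nu)\leq \|X_n-X_Y\|_p \to 0$.

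It remains to produce the set $A$. We define $A := \{\nu \in \mcl P_{p'}(\R^d) \,|\, \exists Y\in \hat A,\ \mcl L(Y)=\nu\}$. The argument above applies equally to every $Y'$ with the same law as $Y$, so $A$ is exactly the image of $\hat A$ under the law map. Density of $A$ is immediate: the map $\mcl L:\mathbb L^{p'} \to \mcl P_{p'}(\R^d)$ is continuous and surjective, so it sends dense sets to dense sets.

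The \textbf{main obstacle} is the $G_\delta$ property, because images of $G_\delta$ sets are not $G_\delta$ in general. We would first show that the set of good $Y$ is invariant under measure-preserving rearrangements, i.e. that $\hat A$ can be chosen saturated for "same law". Then $A = \{\nu \,|\, \mcl L^{-1}(\nu) \subset \hat A\}$, and $\mcl L$ is a continuous, open (a known fact for law maps on atomless spaces), surjective map. For a saturated set $\hat A = \bigcap_n \hat U_n$, we can take each $\hat U_n$ open and saturated. Concretely, Stegall's set is the intersection of the sets $\hat U_n$ of $Y$ for which the minimizing slices of level $\kappa$ have diameter $<1/n$, and these are manifestly invariant under rearrangement, since $\hat f$ and $\hat{\scr C}$ are invariant. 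Because $\mcl L$ is open, $\mcl L(\hat U_n)$ is open, and saturation gives $A=\bigcap_n \mcl L(\hat U_n)$, which is a $G_\delta$ set. The steps needing care are checking that Phelps' proof indeed yields such invariant open sets and verifying the openness of $\mcl L$; we expect the latter to follow from Ryff's theorem \citep{ryff}.
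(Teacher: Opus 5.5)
Your proof is correct. Its skeleton is the paper's: lift to $\hat{\scr C}\subset\mathbb L^p$, apply Stegall's Lemma in $\mathbb L^p$, pull strong exposure down through $\inf_{\mcl L(X)=\mu}-\mathbb E[X\cdot Y]=\mcl I_p(\mu,\nu)$, and use openness of $\mcl L$. The difference is in how you obtain the $G_\delta$ property.

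The paper never proves saturation. It takes $A:=\bigcap_k\mcl L(B_k)$, where $B_k$ is the open dense set of $Y$ admitting a slice of diameter at most $1/k$, and gets density from Baire. Then, for $\nu\in A$ and each $k$ separately, it picks some $Y_k\in B_k$ of law $\nu$ and transfers the diameter bound directly to the slices of $\mu\mapsto f(\mu)+\mcl I_p(\mu,\nu)$ in $W_p$. Different $k$ may use different representatives, so no invariance is needed.

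You instead set $A=\mcl L(\hat A)$ and show each $\hat U_n$ is saturated. This gives density without Baire, and the stronger by-product that every lift of every $\nu\in A$ is a Stegall point in $\mathbb L^{p'}$. The cost is that saturation is not quite \emph{manifest}. A measure-preserving bijection $\tau$ maps slices for $Y$ isometrically onto slices for $Y\circ\tau$. But variables with the same law need not be related by a bijection (e.g. $s$ and $2s\bmod 1$), and a non-injective $\tau$ only embeds $\hat{\scr C}$ into itself.

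You close this gap with the openness estimate. Let $R:=\sup_{\hat{\scr C}}\|X\|_p$. If $\|Y'-Y\circ\tau\|_{p'}\le\kappa/(4R)$, then the $\kappa/2$-slice for $Y'$ lies in the $\kappa$-slice for $Y\circ\tau$. For any $Y'$ with $\mcl L(Y')=\mcl L(Y)$, such a bijection exists by Lemma 5.23 of \citep{carmona2018probabilistic}.

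That lemma, rather than Ryff's one-dimensional theorem, is also the right tool for the openness of $\mcl L$ and for your rearrangement identity. Your $1/n$-approximate choice of $X_n$ there is the correct way to make precise the paper's ``up to Lemma 5.23'' step, since exact optimality of $(X,Y)$ against a fixed $Y$ is not always attainable.
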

\begin{proof}
We reproduce the proof of \citep{nonstegall} for the sake of completeness. Write $\mo := \{X \in \mathbb L^p, \mathcal L(X) \in \scr C\}$ and set $F: \mo \mapsto \R$ defined by $F(X) = f(\mathcal L(X))$. Because $\scr C$ is coupling convex, closed and bounded, we deduce that $\mo$ is a convex, closed and bounded set of $\mathbb L^p$. Hence, from the usual version of Stegall's Lemma (Theorem 5.15 in \citep{phelps}), there is a $G_\delta$ dense set $B$ of $\mathbb L^{p'}$ such that for all $Y \in B$, $X \mapsto F(X) - \langle Y,X\rangle$ has a strongly exposed point of minimum on $\mo$. Fix such a $Y$, consider $X^*$ the associated point of strongly exposed minimum and write $\nu = \mathcal L(Y)$, $µ^* = \mcl L(X^*)$. A simple check yields that $µ \mapsto f(µ) + \mcl I_p(µ,\nu)$ has a strongly exposed point of minimum on $\scr C$ at $µ^*$.

The proof is thus complete if we are able to show that there is a $G_\delta$ dense set of such elements of $\mcl P_{p'}(\R^d)$. We place ourselves at the level of the lift and consider the set 
$$
B_k := \left\{Y\in \mathbb L^{p'} \bigg| \exists \alpha \in \R, diam\left( X \in \mo, F(X) - \langle Y,X\rangle \leq \inf_{\mo}\{F(\cdot) - \langle Y,\cdot\rangle\} + \alpha\right) \leq \frac 1k\right\},
$$
with the convention that the diameter of the empty set is $+ \infty$. From Stegall's Lemma in $\mathbb L^p$, each $B_k$ is dense, furthermore an immediate check yields that they are also all open. We want to show that $A:= \cap_{k \geq 1} \mcl L(B_k)$ is the $G_\delta$ dense set claimed in the statement.


Consider an open set $Q$ of $\mathbb L^{p'}$ and define $R := \{\mcl L(X)| X \in Q\}$. We want to show that $R$ is open as well. Let $µ \in R$, there exists $X \in Q$ such that $\mathcal L(X) = µ$. Since $Q$ is open, there exists $ \eps > 0$ such that $\|Y - X \|\leq \eps \Rightarrow Y \in Q$. Let $\nu \in \mcl P_{p'}(\R^d)$ such that $W_{p'}(\mu,\nu) \leq \frac\eps2$. Let $(\tilde X, \tilde Y)$ be an optimal coupling for $W_{p'}(µ,\nu)$. Using Lemma 5.23 in \citep{carmona2018probabilistic}, we know that there exists a measure preserving mapping $\tau: \tilde \Omega \mapsto \tilde \Omega$ such that $\|\tilde X \circ \tau - X\|_{\infty} \leq \frac\eps2$. We now estimate 
$$
\|X- \tilde Y \circ \tau\|_{p'} \leq \|X - \tilde X \circ \tau\|_{p'} + \|\tilde X - \tilde Y \|_{p'} \leq \eps.
$$
Hence $\tilde Y \circ\tau \in Q$, from which $\nu \in R$ follows. 

Furthermore each $\mcl L(B_k)$ is dense because so is $B_k$. Hence $A$ is indeed a $G_\delta$ dense set because of Baire's Theorem. It remains to verify that the result is true on the points of $A$. Let $\nu \in A$, $k \geq 1$, $Y_k \in B_k$ such that $Y_k \sim \nu$ (and $\alpha$ associated through the definition of $B_k$) and $µ_1,µ_2$ such that
$$
 f(µ_i) + \mcl I_p(µ_i,\nu)  \leq \inf_{µ'}f(µ') + \mcl I_p(µ',\nu) + \alpha.
$$
Up to a use of Lemma 5.23 in \citep{carmona2018probabilistic} that we do not detail, we can consider $X_i$ such that $X_i \sim µ_i$ and $(X_i,Y_k)$ is optimal for $\mcl I_p(µ_i,\nu)$. It follows that $W_p(µ_1,µ_2) \leq \frac1k$. Hence all minimizing sequences are Cauchy and have the same limit. Thus $\nu$ actually strongly exposes a point of minimum and the result is proved.
\end{proof}

The previous perturbed optimization result does not use linear perturbations as Stegall's Lemma. Nonetheless, it will be sufficient to deduce usual geometric properties of spaces with the RNP.

\section{Coupling convex sets and functions}\label{sec:geohor}
In this section, we present some results about coupling convex sets and functions. Notably, we show that Krein-Milman's like theorems are available in this geometry, prove that coupling convex functions are differentiable in many points (similar to a sort of Asplund space property), and we recall a duality result of Pinzi and Savar\'e \citep{pinzi}.

The differentiability result we provide for coupling convex functions (Theorem \ref{thm:asplund}), could be reformulated as the fact that, in this horizontal geometry, Wasserstein spaces are also Asplund spaces. This is natural because, at the level of the lift, $\mathbb L^p$ is an Asplund space since it is reflexive.

\subsection{Extreme points of coupling convex sets}
In addition to Theorem \ref{thm:nonstegall}, using that for any $\nu \in \mcl P_{p'}(\R^d)$, $µ \mapsto \mcl I_p(µ,\nu)$ is continuous and coupling concave yields the following result.
\begin{Theorem}\label{thm:ccdent}
Let $p \in (1,\infty)$ and $\scr C$ be a coupling convex, closed and bounded subset of $\mpprd$. There exists $\ol µ \in \scr C$ such that for any $\eps > 0$, $\ol µ \notin \ol{cco}(\scr C \setminus B(\ol µ,\eps))$.
\end{Theorem}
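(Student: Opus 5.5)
Apply Theorem \ref{thm:nonstegall} with $f \equiv 0$ on $\scr C$; this is lsc and bounded from below. Since $A$ is dense, hence non-empty, we may fix some $\nu \in A$. Then $\Phi(µ) := \mcl I_p(µ,\nu)$ has a strongly exposed point of minimum $\ol µ \in \scr C$. Set $m := \Phi(\ol µ) = \inf_{\scr C}\Phi$. Strong exposure gives, for every $\eps > 0$, some $\kappa_\eps > 0$ with
$$
\inf_{µ \in \scr C \setminus B(\ol µ,\eps)} \Phi(µ) \geq m + \kappa_\eps .
$$
If this failed, there would be a minimizing sequence staying at distance at least $\eps$ from $\ol µ$, contradicting strong exposure. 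The goal is to transfer this strict inequality from $\scr C\setminus B(\ol µ,\eps)$ to its closed coupling convex hull.

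We use the two facts recalled just before the statement: $\Phi$ is coupling concave and continuous on $\mpprd$. Note that $\mcl I_p(\cdot,\nu)$ is defined on $\mpprd$ for fixed $\nu \in \mcl P_{p'}(\R^d)$. Continuity holds because $|\int x\cdot y\,d\gamma|$ is controlled via H\"older by $W_p$ perturbations of the first marginal, using a gluing of couplings.

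Let $D := \scr C \setminus B(\ol µ,\eps)$ and $S := \{µ \in \mpprd \,|\, \Phi(µ) \geq m+\kappa_\eps\}$. Then $S$ contains $D$. By coupling concavity, $S$ is coupling convex: for $µ_1,µ_2 \in S$, $\gamma \in \Pi(µ_1,µ_2)$ and $t\in[0,1]$,
$$
\Phi\big(((1-t)\pi_1+t\pi_2)_\#\gamma\big) \geq (1-t)\Phi(µ_1)+t\Phi(µ_2) \geq m+\kappa_\eps .
$$
By continuity of $\Phi$, $S$ is also closed. Hence $\ol{cco}(D) \subset S$. The hull $cco(D)$ is the smallest coupling convex set containing $D$, and taking closures preserves inclusion in the closed set $S$. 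Since $\Phi(\ol µ) = m < m+\kappa_\eps$, we get $\ol µ \notin S$, so $\ol µ \notin \ol{cco}(D)$ for every $\eps>0$, which is the claim.

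The main point requiring care is the justification of continuity and coupling concavity of $\mcl I_p(\cdot,\nu)$, which the paper states as known.

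\emph{Coupling concavity.} Lift to $\mathbb L^p$: $\mcl I_p(\mcl L(X),\nu) = \inf\{-\mathbb E[X\cdot Y] \,|\, Y \sim \nu\}$, using the atomless probability space. This is an infimum of linear functionals of $X$, hence concave in $X$. Given $\gamma \in \Pi(µ_1,µ_2)$, realize it as the law of $(X_1,X_2)$; then $(1-t)X_1+tX_2$ has the interpolated law. This is exactly the argument showing that the lift of a coupling convex function is convex.

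\emph{Continuity.} For $µ,µ'$, glue an optimal $W_p$ coupling of $(µ,µ')$ with a near-optimal coupling for $\mcl I_p(µ,\nu)$. H\"older then gives
$$
\mcl I_p(µ',\nu) \leq \mcl I_p(µ,\nu) + W_p(µ,µ')\, M_{p'}(\nu)^{1/p'} ,
$$
and exchanging the roles of $µ$ and $µ'$ gives the reverse bound.

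Boundedness and closedness of $\scr C$ are only needed to invoke Theorem \ref{thm:nonstegall}.
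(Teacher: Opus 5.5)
Your proposal is correct and is essentially the paper's own proof. You apply Theorem \ref{thm:nonstegall} with $f\equiv 0$ to get $\nu$ and a point $\ol{\mu}$ strongly exposed by $\mcl I_p(\cdot,\nu)$, then use coupling concavity and continuity of $\mcl I_p(\cdot,\nu)$ to see that the closed, coupling convex superlevel set $\{\mcl I_p(\cdot,\nu)\geq \mcl I_p(\ol{\mu},\nu)+\kappa_\eps\}$ contains $\scr C\setminus B(\ol{\mu},\eps)$ but not $\ol{\mu}$. Your extra justifications of those two properties are sound and fill in what the paper only asserts: the Lipschitz bound by gluing and H\"older, and concavity via the lift, where the identity with $\inf_{Y\sim\nu}$ holds as an infimum thanks to the rearrangement lemma used in the proof of Theorem \ref{thm:nonstegall}.
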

\begin{proof}
We apply Theorem \ref{thm:nonstegall} to $f \equiv 0$ and deduce the existence of $\nu \in \mcl P_{p'}(\R^d)$ and $\ol µ \in \scr C$ such that $\ol µ$ is strongly exposed by $\mcl I_p(\cdot,\nu)$. In particular, for all $\eps > 0$, there exists $\delta > 0$ such that 
$$
\inf \left \{ \mcl I_p(µ,\nu) | µ \in \scr C \setminus B(\ol µ,\eps) \right\} \geq \mcl I_p(\ol µ,\nu) + \delta.
$$
Because $\mcl I_p(\cdot,\nu)$ is coupling concave and continuous, we deduce that for any $µ \in \ol{cco}(\scr C \setminus B(\ol µ,\eps))$, $\mcl I_p(µ,\nu) \geq \mcl I_p(\ol µ,\nu) + \delta$, hence the result.
\end{proof}

We can also obtain an equivalent of Krein-Milman Theorem in this setting.
\begin{Theorem}
Let $p\in (1,\infty)$ and $\scr C$ be a coupling convex, closed and bounded subset of $\mpprd$. Then $\scr C$ is closure of the coupling convex hull of its strongly exposed points by functionals of the form $\mcl I_p(\cdot,\nu)$ for $\nu \in \mcl P_{p'}(\R^d)$.
\end{Theorem}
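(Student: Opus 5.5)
We follow the classical Krein–Milman argument from the RNP (Theorem 5.20 in \citep{phelps}), transported to the coupling convex setting by means of Theorem \ref{thm:nonstegall}. Denote by $\mathrm{Exp}(\scr C)$ the set of points of $\scr C$ which are strongly exposed by some $\mcl I_p(\cdot,\nu)$, $\nu\in\mcl P_{p'}(\R^d)$. Set $\scr D := \ol{cco}(\mathrm{Exp}(\scr C))$. Since $\scr C$ is closed and coupling convex, $\scr D \subset \scr C$. It remains to show the reverse inclusion, and we argue by contradiction: assume there is $µ_0 \in \scr C\setminus \scr D$.

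\textbf{Step 1: separation.} We separate $µ_0$ from $\scr D$ at the level of the lift. Set $\mo_{\scr D} := \{X\in\mathbb L^p \,|\, \mcl L(X)\in \scr D\}$. It is closed, bounded and convex, because $\scr D$ is coupling convex and closed. It is also law invariant. Pick $X_0$ with $\mcl L(X_0)=µ_0$; then $X_0\notin \mo_{\scr D}$. Hahn–Banach in $\mathbb L^p$, whose dual is $\mathbb L^{p'}$, gives $Y_0$ and $\eta>0$ with
$$
\mathbb E[X_0\cdot Y_0] \geq \sup_{X\in\mo_{\scr D}} \mathbb E[X\cdot Y_0] + \eta .
$$
Set $\nu_0:=\mcl L(Y_0)$. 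By law invariance of $\mo_{\scr D}$ and Lemma 5.23 in \citep{carmona2018probabilistic}, which gives approximation of any coupling by rearrangements, the supremum over $\mo_{\scr D}$ equals $\sup_{µ\in\scr D} -\mcl I_p(µ,\nu_0)$. Likewise $-\mcl I_p(µ_0,\nu_0)\geq \mathbb E[X_0\cdot Y_0]$. Hence
$$
\mcl I_p(µ_0,\nu_0) + \eta \leq \inf_{µ\in\scr D}\mcl I_p(µ,\nu_0).
$$

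\textbf{Step 2: perturbation.} We now find a strongly exposed point of $\scr C$ where the separating functional is nearly minimal, which contradicts the separation. By Theorem \ref{thm:nonstegall} with $f\equiv0$, the set $A$ of $\nu$ strongly exposing a point of $\scr C$ through $\mcl I_p(\cdot,\nu)$ is dense in $\mcl P_{p'}(\R^d)$. Take $\nu\in A$ with $W_{p'}(\nu,\nu_0)\le \eta/(4R)$, where $R$ bounds the $p$-moments of $\scr C$, i.e. $R \geq \sup_{µ\in\scr C} M_p(µ)^{1/p}$. We use the Lipschitz estimate
$$
|\mcl I_p(µ,\nu)-\mcl I_p(µ,\nu_0)| \leq M_p(µ)^{1/p}\, W_{p'}(\nu,\nu_0) \qquad \text{for all } µ,
$$
which is obtained by gluing an optimal coupling for $W_{p'}$ with an optimal coupling for $\mcl I_p(µ,\cdot)$ and applying Hölder's inequality. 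With this choice of $\nu$, it follows that $\inf_{\scr D}\mcl I_p(\cdot,\nu) \geq \mcl I_p(µ_0,\nu)+\eta/2$. Let $\bar µ$ be the point strongly exposed by $\nu$. Then $\bar µ\in\mathrm{Exp}(\scr C)\subset\scr D$, and at the same time $\mcl I_p(\bar µ,\nu)\le \mcl I_p(µ_0,\nu)$ by minimality over $\scr C\ni µ_0$. This is a contradiction.

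\textbf{Main obstacle.} The delicate step is Step 1. We must make sure that a linear separator in $\mathbb L^p$ of the law-invariant set $\mo_{\scr D}$ descends to a functional of the form $\mcl I_p(\cdot,\nu_0)$ with a strict gap. Two points need care. First, the supremum of $\mathbb E[X\cdot Y_0]$ over $X$ with a given law $µ$ must be identified with $-\mcl I_p(µ,\nu_0)$. This requires that, for a fixed $Y_0$, couplings achieved as $(X,Y_0)$ are dense among all couplings in $\Pi(µ,\nu_0)$, which is exactly the atomless rearrangement lemma. Second, replacing the specific $X_0$ by the optimizer for $\mcl I_p(µ_0,\nu_0)$ only increases the left-hand side of the separation inequality, so the gap survives. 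The Lipschitz estimate in $\nu$ used in Step 2 is routine by comparison.
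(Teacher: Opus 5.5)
Your proof is correct, but it is organized differently from the paper's.

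\textbf{The paper's route.} It stays entirely in the lift. It applies the Banach-space Krein--Milman theorem for RNP sets (Theorem 5.20 in Phelps) to $\mo=\{X\in\mathbb L^p \,|\, \mcl L(X)\in\scr C\}$ and gets $\mo=\ol{co}(B)$, with $B$ the set of strongly exposed points of $\mo$. It then pushes this identity down. Each $\mcl L(X)$ with $X\in B$ is strongly exposed by $\mcl I_p(\cdot,\mcl L(Y))$, as in the proof of Theorem \ref{thm:nonstegall}. Since $\mcl L$ is $1$-Lipschitz and sends convex combinations to coupling-convex combinations, $\scr C=\mcl L(\mo)\subset\ol{cco}(\mcl L(B))$.

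\textbf{Your route.} You rerun the proof of Phelps' Theorem 5.20 directly on $\mpprd$.
\begin{itemize}
\item The Hahn--Banach step is done in the lift on the law-invariant set $\mo_{\scr D}$. It is then pushed down through the rearrangement lemma, which gives a strict separation of $\mu_0$ by some $\mcl I_p(\cdot,\nu_0)$. Here you should justify that $\scr D$ is coupling convex, i.e.\ that the closure of a coupling convex set is coupling convex; this follows by gluing optimal couplings.
\item The density step comes from Theorem \ref{thm:nonstegall}. Only density is used, not the $G_\delta$ property. Density also follows more cheaply from the density in $\mathbb L^{p'}$ of strongly exposing functionals, together with continuity and surjectivity of $\mcl L$.
\item The Lipschitz bound of $\mcl I_p$ in $\nu$ lets you replace $\nu_0$ by a nearby exposing $\nu$.
\end{itemize}

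\textbf{Comparison.} The paper's argument is shorter and needs no perturbation estimate. However, it leaves implicit two facts: that $\mcl L(\ol{co}(B))\subset\ol{cco}(\mcl L(B))$, and that lifted strongly exposed points descend to $\mcl I_p$-strongly exposed points. Yours is longer but yields two by-products. First, a genuinely horizontal separation theorem: any point outside a closed coupling convex set is strictly separated from it by a functional $\mcl I_p(\cdot,\nu_0)$. This is the set-level analogue of Theorem \ref{thm:repconvex}. Second, the exposing measures can be drawn from any dense subset of $\mcl P_{p'}(\R^d)$.
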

\begin{proof}
We proceed as in the proof of Theorem \ref{thm:nonstegall}. Let $\mo := \{ X \in \mathbb L^p | \mcl L(X) \in \scr C\}$. Let $B:= \{ X \in \mo | \exists Y \in \mathbb L^{p'}, X' \mapsto \langle Y,X'\rangle \text{ has a strongly exposed minimum at } X\}$. Since $\mathbb L^p$ has the RNP, we know that $\mo = \ol{co}(B)$. We saw in the proof of Theorem \ref{thm:nonstegall} that for any $X \in B$, $\mcl L(X)$ is strongly exposed by a functional $ \mcl I_p(\cdot,\nu)$ for some $\nu \in \mcl P_{p'}(\R^d)$. Hence the result follows since for any $µ \in \scr C$, there exists $X \in \mo$ of law $µ$.
\end{proof}

\subsection{Continuous coupling convex functions}
In this section, we state two results on continuous coupling convex functions on $\mpprd$, for $p \in (1,\infty)$, that are apparent from the previous study, and which we believe can have an interest outside of this work. The first one states that continuous coupling convex functions are differentiable in a lot of points, and the second one that the family of coupling convex functions $(-\mcl I_p(\cdot,\nu))_{\nu \in \mcl P_{p'}(\R^d)}$ generates all the continuous coupling convex functions of $\mpprd$. The second result is also established in \citep{pinzi}, toward which we refer for additional results on coupling convex functions. 
We recall that a function $U : \mpprd \mapsto \R$ is horizontally differentiable at $µ \in \mpprd$ if there exists $\phi \in L^{p'}(\R^d,\R^d)$ such that for any $\nu \in \mpprd, \gamma \in \Pi(µ,\nu)$
$$
\lim_{\int |x-y|^pd\gamma \to 0} \frac{U(\nu) - U(µ) - \int_{\R^{2d}}\phi(x)\cdot(y-x)\gamma(dx,dy)}{\left(\int_{\R^{2d}} |x-y|^p\gamma(dx,dy)\right)^\frac1p} = 0.
$$

\begin{Theorem}\label{thm:asplund}
Let $p \in (1,\infty)$, $U: \mpprd\mapsto \R$ be continuous and coupling convex. Then, it is (horizontally) differentiable on a dense $G_\delta$ set.
\end{Theorem}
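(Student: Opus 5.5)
We lift $U$ to $\mathbb L^p$ by setting $\mcl U(X) := U(\mcl L(X))$, prove the result there, and then push it back down to $\mpprd$ as in the proof of Theorem \ref{thm:nonstegall}.

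\textbf{Step 1: differentiability of the lift.} Since $U$ is coupling convex, $\mcl U$ is convex, as recalled in the Preliminaries. It is also continuous on $\mathbb L^p$, because $W_p(\mcl L(X),\mcl L(X')) \leq \|X-X'\|_p$. For $p\in(1,\infty)$ the space $\mathbb L^p$ is reflexive, hence Asplund. Therefore $\mcl U$ is Fr\'echet differentiable on a dense $G_\delta$ set $D \subset \mathbb L^p$.

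\textbf{Step 2: differentiability only depends on the law.} We show that $D$ is saturated, i.e.\ if $X\in D$ and $\mcl L(X')=\mcl L(X)$, then $X'\in D$.
\begin{itemize}
\item For a measure-preserving bijection $\tau$ of $[0,1]$, invariance of $\mcl U$ under $\tau$ gives $X\circ\tau \in D$, with gradient $\nabla\mcl U(X)\circ\tau$.
\item For a general $X'$ of the same law, Lemma 5.23 in \citep{carmona2018probabilistic} gives $\tau$ with $\|X\circ\tau - X'\|_\infty$ arbitrarily small. Since $D$ is only $G_\delta$, closeness alone does not preserve differentiability, so we argue through a quantitative characterization instead.
\item A continuous convex function is Fr\'echet differentiable at $X$ iff
\[
\omega(X,\delta):=\sup_{\|h\|_p=\delta}\bigl(\mcl U(X+h)+\mcl U(X-h)-2\mcl U(X)\bigr)=o(\delta).
\]
We check that $\omega(\cdot,\delta)$ depends only on the law. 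If $X'=X\circ\tau$ with $\tau$ measure preserving but not invertible, then any $h$ that is $\sigma(X')$-measurable has the form $g\circ\tau$. For general $h$, coupling convexity lets us replace $h$ by a conditional expectation, which only decreases the symmetric second difference, using Proposition \ref{prop:preliminaries} and the convex order. This gives equality of the moduli for equal laws.
\end{itemize}
Consequently $D=\mcl L^{-1}(\mcl D)$ for some set $\mcl D\subset\mpprd$, and, passing to the law, the gradient has the form $\nabla\mcl U(X)=\phi(X)$ for some $\phi\in L^{p'}(\mcl L(X))$. That the gradient is $\sigma(X)$-measurable is the standard Lions lift argument: it is invariant under all measure-preserving maps fixing $X$.

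\textbf{Step 3: translating back to $\mpprd$.} Given $\gamma\in\Pi(\mu,\nu)$, realize it as $(X,Y)$ with $X\in D$, which is possible by saturation. Then
\[
U(\nu)-U(\mu)-\int\phi(x)\cdot(y-x)\,d\gamma = \mcl U(Y)-\mcl U(X)-\langle\nabla\mcl U(X),Y-X\rangle = o(\|Y-X\|_p),
\]
and $\|Y-X\|_p$ is exactly $\bigl(\int|x-y|^p\,d\gamma\bigr)^{1/p}$. The uniformity required in the definition follows from the Fr\'echet uniformity, because all couplings are realized with the same $X$ (any two representatives of $\mu$ are related by a measure-preserving map). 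For the topology: $\mcl D$ is dense since $D$ is. It is $G_\delta$ because $D=\bigcap_k O_k$ with $O_k=\{X:\exists\delta,\ \omega(X,\delta)<\delta/k\}$; each $O_k$ is open and saturated, and the image of a saturated open set is open by the argument in the proof of Theorem \ref{thm:nonstegall}. Hence $\mcl D=\bigcap_k\mcl L(O_k)$.

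\textbf{Main obstacle.} The hardest step is Step 2: showing that the modulus $\omega$ depends only on the law, which needs care with non-invertible measure-preserving maps. Our first attempt will be the conditional expectation and convex order argument above. If that fails, we would use the continuity of $\omega(\cdot,\delta)$ in $X$ (each $\mcl U(X\pm h)$ is continuous in $X$ locally uniformly in $h$) together with the approximation Lemma 5.23.
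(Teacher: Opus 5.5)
Your architecture is the paper's: lift to $\mathbb L^p$, use the Asplund property, show the sets $O_k$ (the paper's $B_k$, built from $\Delta_t\mcl U$) are saturated, push them down via the open-image argument of Theorem \ref{thm:nonstegall}, and finish with the lift/horizontal correspondence. However, the argument you lead with for the key step, the law invariance of $\omega(\cdot,\delta)$, runs in the wrong direction.

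For $X'=X\circ\tau$, the substitution $h\mapsto h\circ\tau$ already gives $\omega(X,\delta)\le\omega(X',\delta)$. What is missing is $\omega(X',\delta)\le\omega(X,\delta)$, i.e.\ control of the second difference at $X'$ along directions $h'$ that are \emph{not} $\sigma(X')$-measurable. Replacing $h'$ by $\mathbb E[h'|\sigma(X')]$ \emph{decreases} the second difference (conditional Jensen and Proposition \ref{prop:preliminaries}). So it only shows that the supremum over $\sigma(X')$-measurable directions is at most $\omega(X',\delta)$, which you already know. Directions carrying randomness independent of $X'$ may have no counterpart at $X$ (e.g.\ when $X$ generates the Borel $\sigma$-algebra of $[0,1]$), and these are exactly what this argument cannot handle.

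Your fallback is the correct argument, and it is the paper's. The map $\omega(\cdot,\delta)$ is invariant under measure-preserving bijections and lower semicontinuous in $X$, being a supremum of continuous functions. Approximating $X'$ by $X\circ\tau_n$ with bijections $\tau_n$ (Lemma 5.23 in \citep{carmona2018probabilistic}) gives $\omega(X',\delta)\le\omega(X,\delta)$, and exchanging roles gives equality; no local uniformity is needed.

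A related error sits in Step 3. Not all couplings in $\Pi(\mu,\nu)$ can be realized with one fixed $X$: for $d=1$ and $X(s)=s$, only couplings supported on graphs arise. Nor need two representatives be related by a measure-preserving map. Uniformity over couplings follows instead from the corrected Step 2. For convex $\mcl U$ differentiable at any representative $X'$ of $\mu$,
\[
0\le \mcl U(X'+h)-\mcl U(X')-\langle\nabla\mcl U(X'),h\rangle\le \mcl U(X'+h)+\mcl U(X'-h)-2\mcl U(X'),
\]
so the Fr\'echet remainder is bounded by $\omega(X',\|h\|_p)$, which depends only on $\mu$. This requires that $\nabla\mcl U(X')=\phi(X')$ with $\phi$ independent of $X'$, which is the standard fact the paper imports from \citep{gangbotudorascu,alfonsi,bertuccibook}. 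Your conditional-expectation idea is in fact the right tool there: it yields $\langle\nabla\mcl U(X),h\rangle=\langle\nabla\mcl U(X),\mathbb E[h|\sigma(X)]\rangle$ for all $h$, hence the $\sigma(X)$-measurability of the gradient.
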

\begin{proof}
Let $\mcl U: \mathbb L^p \mapsto \R$ be defined by $\mcl U(X) = U(\mcl L(X))$. Then, $\mcl U$ is continuous and convex, on $\mathbb L^p$ which is an Asplund space since it is reflexive (recall that $p \in (1,\infty)$). Recalling Propositions 1.23 and 1.25 in \citep{phelps}, we know that 
\begin{itemize}
\item$$
B_k := \bigcup_{t  >0}\left\{ X \in \mathbb L^p | \Delta_t \mcl U(X) \leq \frac1k\right\}
$$
is open, where 
$$
\Delta_t \mcl U(X) := \frac1t\sup_{\|H\|\leq 1} \{\mcl U(X+ tH) + \mcl U(X-tH) - 2\mcl U(X)\}.
$$
Note that for any $X \in \mathbb L^p$, $\Delta_t \mcl U(X)$ is non-decreasing in $t$. 
\item The $G_\delta$ set $\cap_{k \geq 1} B_k$ is the set of Fr\'echet differentiability of $\mcl U$.
\end{itemize}
Since $\mathbb L^p$ is Asplund, the previous intersection is dense, and thus each of the $B_k$ is dense. Because $\Delta_t U$ is Lipschitz continuous in $X$ (with a constant depending on $t$ of course), we deduce that $\Delta_t \mcl U$ is in fact law invariant, and thus that if $X \in B_k$, then for any $X'$ such that $\mcl L(X') = \mcl L(X)$, $X' \in B_k$. We now set $A_k = \mcl L(B_k)$. Arguing as in the proof of Theorem \ref{thm:nonstegall}, we know that $A_k$ is open. It is also dense. Thus $\cap_{k \geq 1} A_k$ is a dense $G_\delta$ set of $\mpprd$. 

Let $µ \in \cap_{k\geq 1} A_k$ and $X \sim µ$. We deduce from what we have established above that $X \in \cap_{k \geq 1} B_k$, thus $X$ is a point of Fr\'echet differentiability of $\mcl U$. From \citep{gangbotudorascu}, \citep{alfonsi} in the case $p=2$ or Section 4 in \citep{bertuccibook} in the general case, we deduce that $U$ is then horizontally differentiable at $µ$ and the result follows.
\end{proof}
\begin{Rem}
Note that the previous theorem applies directly to $\mcl I_p$. Since the differentiability of such maps is directly linked to the uniqueness of optimal transport plans between its arguments, we have also proven that for any measure $\nu \in \mpt$, there is a dense $G_\delta$ subset of $\mpt$ for which there is a unique optimal transport plan toward $\nu$.
\end{Rem}

\begin{Theorem}\label{thm:repconvex}
Let $U: \mpprd \mapsto \R$ be continuous and coupling convex. Consider the function $U^*: \mcl P_{p'}(\R^d) \mapsto \R\cup\{+ \infty\}$ defined by 
$$
U^*(\nu) = \sup_{µ \in \mpprd} \{-\mcl I_p(µ,\nu) - U(µ)\}
$$
Then the following holds for any $µ \in \mpprd$
$$
U(µ) = \sup_{\nu \in \mcl P_{p'}(\R^d)}\{-\mcl I_p(µ,\nu) - U^*(\nu)\}.
$$
\end{Theorem}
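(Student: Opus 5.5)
The plan is to lift to $\mathbb L^p$, apply the Fenchel--Moreau theorem there, and then push the representation back down to $\mpprd$ using law invariance.

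Set $\mcl U(X)=U(\mcl L(X))$ on $\mathbb L^p$. Since $U$ is coupling convex and continuous, $\mcl U$ is convex and continuous on the reflexive space $\mathbb L^p$, with dual $\mathbb L^{p'}$. We use the pairing $\langle Y,X\rangle=\mathbb E[X\cdot Y]$. The Fenchel conjugate is $\mcl U^*(Y)=\sup_X\{\langle Y,X\rangle-\mcl U(X)\}$, and Fenchel--Moreau gives $\mcl U(X)=\sup_Y\{\langle Y,X\rangle-\mcl U^*(Y)\}$ for every $X$.

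\emph{Step 1: identify the lifted conjugate.} We claim $\mcl U^*(Y)=U^*(\mcl L(Y))$. For fixed $Y\sim\nu$ and $\mu\in\mpprd$, we have
$$\sup_{X\sim\mu}\langle Y,X\rangle=-\mcl I_p(\mu,\nu).$$
The inequality ``$\leq$'' is trivial, since every such pair $(X,Y)$ is a coupling. For ``$\geq$'' we need that an optimal (or near-optimal) coupling $\gamma$ can be realised as $\mcl L(X,Y)$ with $Y$ prescribed, up to arbitrarily small error. The plan is to use the atomless probability space $[0,1]$ as in the proof of Theorem \ref{thm:nonstegall}. Lemma 5.23 of \citep{carmona2018probabilistic} gives a measure preserving $\tau$ with $\tilde Y\circ\tau$ close to $Y$ in $\mathbb L^{p'}$, where $(\tilde X,\tilde Y)\sim\gamma$. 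Then $X:=\tilde X\circ\tau$ satisfies $\langle Y,X\rangle\geq -\mcl I_p(\mu,\nu)-\eps\|\tilde X\|_p$ by Hölder. Taking the supremum over $\mu$ then gives $\mcl U^*(Y)=\sup_\mu\{-\mcl I_p(\mu,\nu)-U(\mu)\}=U^*(\nu)$. In particular $U^*$ is well defined and takes values in $\R\cup\{+\infty\}$.

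\emph{Step 2: push the representation down.} Fix $\mu$ and $X\sim\mu$. Then
$$U(\mu)=\mcl U(X)=\sup_Y\{\langle Y,X\rangle-U^*(\mcl L(Y))\}.$$
For each $Y$ with law $\nu$, we have $\langle Y,X\rangle\le -\mcl I_p(\mu,\nu)$. This gives $U(\mu)\le\sup_\nu\{-\mcl I_p(\mu,\nu)-U^*(\nu)\}$. The reverse inequality is just the definition of $U^*$, which is a Fenchel--Young inequality: $U^*(\nu)\ge -\mcl I_p(\mu,\nu)-U(\mu)$ for all $\mu,\nu$. Together these give equality.

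The main obstacle is the approximation argument in Step 1, which shows that optimal couplings can be realised with one marginal random variable prescribed. In fact only the inequality $\mcl U^*(Y)\ge U^*(\mcl L(Y))$ is needed for Step 2, because Step 2 uses $\mcl U^*(Y)\geq U^*(\mcl L(Y))$ to bound from above. So only this direction needs care, and it relies exactly on the rearrangement lemma already used in the proof of Theorem \ref{thm:nonstegall}.
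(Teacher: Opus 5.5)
Your proof is correct, but it takes a different route from the paper.

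\textbf{The paper's route.} The paper argues pointwise. It invokes the horizontal sub-differentiability of continuous coupling convex functions (Section 5 of \citep{bertuccibook}). At each $\mu^*$ this gives a coupling $\gamma$ of $\mu^*$ and some $\nu^*\in\mathcal P_{p'}(\R^d)$ satisfying a subgradient inequality against every $\Gamma\in\Pi(\gamma,\mu)$. Testing that inequality at $\mu=\mu^*$ shows $\gamma$ is optimal for $\mathcal I_p(\mu^*,\nu^*)$. Then $-\mathcal I_p(\cdot,\nu^*)+\mathcal I_p(\mu^*,\nu^*)+U(\mu^*)$ is a minorant of $U$ touching it at $\mu^*$.

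\textbf{Your route.} You argue globally: Fenchel--Moreau for the lift $\mathcal U$ on $\mathbb L^p$, plus the identification $\mathcal U^*=U^*\circ\mathcal L$. Its nontrivial half rests on the identity $\sup_{X\sim\mu}\langle Y,X\rangle=-\mathcal I_p(\mu,\mathcal L(Y))$ for a \emph{fixed} $Y$, which you get from Lemma 5.23 of \citep{carmona2018probabilistic}. You correctly note that only $\mathcal U^*(Y)\geq U^*(\mathcal L(Y))$ is needed. This rearrangement step does the same job as the clause ``for every $\Gamma\in\Pi(\gamma,\mu)$'' in the paper's sub-differential, which is where law invariance enters in the cited result.

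\textbf{Comparison.} Your route is more self-contained: it uses only tools already present in the paper. It also needs only lower semicontinuity of $U$, so it extends to lsc coupling convex functions. The paper's route gives more, namely that the supremum is attained at every $\mu$. You can recover attainment cheaply. Since $\mathcal U$ is continuous, pick $Y\in\partial\mathcal U(X)$ with $X\sim\mu$. Then $U(\mu)=\langle Y,X\rangle-\mathcal U^*(Y)\leq -\mathcal I_p(\mu,\mathcal L(Y))-U^*(\mathcal L(Y))$, so $\nu=\mathcal L(Y)$ is a maximizer.
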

Note that for any $\nu \in \mcl P_{p'}(\R^d)$, $U^*(\nu)$ is the biggest constant $c \in \R\cup\{+\infty\}$ such that $µ \mapsto -\mcl I_p(µ,\nu) - c$ is lower than $U$ everywhere.
\begin{proof}
By construction it follows that for any $µ \in \mpprd$
$$
U(µ) \geq \sup_{\nu \in \mcl P_{p'}(\R^d)}\{-\mcl I_p(µ,\nu) - U^*(\nu)\}.
$$
Now let $µ^* \in \mpprd$, observe that thanks to Section 5 in \citep{bertuccibook}, because $U$ is continuous and coupling convex, $U$ is (horizontally) sub-differentiable at $µ^*$, i.e. there exists $\gamma \in \mcl P(\R^d\times \R^d)$ such that $(\pi_1)_\#\gamma = µ^*, \nu^* := (\pi_2)_\#\gamma \in \mcl P_{p'}(\R^d)$ such that for any $µ \in \mpprd$, $\Gamma \in \Pi(\gamma,µ)$
\be\label{eq:subdiff}
U(µ) \geq \int_{(\R^d)^3} z\cdot (y-x) \Gamma(dx,dz,dy) + U(µ^*).
\ee
Evaluating the previous for any $\Gamma \in \Pi(\gamma,µ^*)$, we obtain that $\gamma$ is optimal in $\mcl I_p(µ^*,\nu^*)$. This optimality, once used in \eqref{eq:subdiff} implies that 
$$
µ\mapsto - \mcl I_p(µ,\nu^*)  + \mcl I_p(µ^*,\nu^*) + U(µ^*) 
$$
is bounded from above by $U$ and that the equality holds at $µ^*$, hence the result follows.
\end{proof}

\section{Horizontal martingales}\label{sec:marthor}
\subsection{Definition and convergence theorem}
In this section, we define horizontal martingales on Wasserstein spaces and prove a convergence result for them. The definition we are about to give is in agreement with the horizontal geometry of the Wasserstein spaces. Defining a proper conditional expectation on a general metric space does not seem to be a feasible task and we do not address it here. Instead, we rely heavily on the horizontal geometry. In particular we do not pass through the barycenters to define martingales, such as it has been done by Sturm \citep{sturm}, even though there are the Wasserstein barycenters of Agueh and Carlier \citep{carlier}. The main reason why is that Sturm's theory is mainly developed for non-positive curvature space, which the Wasserstein spaces are not.

 Defining martingales on non-linear spaces is a question which dates back to the early developments of stochastic calculus in the beginning of the second half of the last century, notably to work with stochastic processes valued in manifolds. Having this literature in mind (which we recall next), the following definition should not be surprising.

\begin{Def}
Let $p\in (1,\infty)$, $T \subset \R$, $(\Omega,\mcl A,\mathbb P)$ be a probability space and consider an increasing sequence $(B_t)_{t \in T}$ of sub-$\sigma$-algebras of $\mcl A$. The $\mpprd$ valued and adapted process $(µ_t)_{ t \in T}$ is a $p$-horizontal martingale if:
\begin{itemize}
\item for all $t \in T$, $\mathbb E\left[(M_p(µ_t))^\frac1p\right] < \infty$,
\item for all continuous and coupling convex function $\phi: \mpprd \mapsto \R$, the process $(\phi(µ_t))_{t \in T}$ is a sub-martingale.
\end{itemize}
\end{Def}
\begin{Rem}
Given a Banach space $E$, a usual martingale $(X_t)_{t \in T}$ and a convex function $\phi: E \mapsto \R$, Jensen's inequality implies that $(\phi(X_t))_{t \in T}$ is indeed a sub-martingale. We leave to the interested reader the verification that it is in fact a characterization in this linear setting.
\end{Rem}
\begin{Rem}
In the previous definition, the integrability condition on $µ_t$ guarantees that for all continuous coupling convex function $\phi$, $\mathbb E[\phi(µ_t)]$ is well defined in $(-\infty,\infty]$. Indeed, if we look at the lift $V$ of $\phi$ on $\mathbb L^p$, which is convex and continuous on $\mathbb L^p$, we know that for $X_t \sim µ_t$, $\mathbb E[V(X_t)]$ is well defined as soon as $X_t$ has a first moment. 
\end{Rem}
The (fundamental) idea to define a semi-martingale on a manifold through its image by smooth real functions defined on the manifold dates back to Schwartz \citep{schwartz}. When concerned with defining martingales valued in manifolds through convex functions, it is difficult to pinpoint a single author. In \citep{darling}, Darling gave a definition and credited Eells and Elworthy, while in \citep{sanslarmes}, Meyer credited Bismut for this definition.

The existence of such martingales is guaranteed by the following result.
\begin{Prop}
Let $p\in (1,\infty)$ and $(X_t)_{t \in T}$ be a $\mathbb L^p$ valued martingale. The process $((X_t)_\#Leb)_{t \in T}$ is a $p$-horizontal martingale.
\end{Prop}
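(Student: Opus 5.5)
The plan is to set $\mu_t := (X_t)_\#Leb$, reading $(X_t)_{t\in T}$ as an $\mathbb L^p$-valued martingale on $(\Omega,\mcl A,\mathbb P)$ with filtration $(B_t)_{t\in T}$. Thus for each $\omega$, $X_t(\omega)\in \mathbb L^p$ is a random variable on $([0,1],\mcl B,Leb)$ and $\mu_t(\omega)=\mcl L(X_t(\omega))$. Every property we need will be pulled down from the lift.

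\emph{Adaptedness and integrability.} The map $\mcl L:\mathbb L^p\to\mpprd$ is $1$-Lipschitz, since $W_p(\mcl L(X),\mcl L(Y))\le \|X-Y\|_p$. Hence $\mu_t=\mcl L\circ X_t$ is $B_t$-measurable, because $X_t$ is strongly (Bochner) measurable and $B_t$-measurable. We also have $(M_p(\mu_t))^{1/p}=\|X_t\|_p$, so $\mathbb E[(M_p(\mu_t))^{1/p}]=\mathbb E\|X_t\|_p<\infty$, as $X_t\in L^1(\mathbb P,\mathbb L^p)$.

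\emph{Submartingale property.} Let $\phi:\mpprd\to\R$ be continuous and coupling convex, and let $\mcl U(X)=\phi(\mcl L(X))$ be its lift. As recalled in the Preliminaries, $\mcl U$ is convex, and it is continuous on $\mathbb L^p$ as a composition of continuous maps. By definition $\phi(\mu_t)=\mcl U(X_t)$, so it suffices to show that $(\mcl U(X_t))_t$ is a real submartingale. This is the Banach-space conditional Jensen inequality
\[
\mcl U(\mathbb E[X_{t_2}|B_{t_1}])\le \mathbb E[\mcl U(X_{t_2})|B_{t_1}], \qquad t_1\le t_2.
\]
To prove it, note that since $\mcl U$ is convex, continuous and real valued on the separable Banach space $\mathbb L^p$, it is the supremum of countably many continuous affine minorants $\ell_n(X)=\langle Y_n,X\rangle+c_n$ with $Y_n\in\mathbb L^{p'}$. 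Separability is what makes a countable family available: take supporting functionals at a countable dense set, which exist by continuity. For each $n$, linearity of the conditional expectation and its commutation with bounded linear functionals give
\[
\ell_n(X_{t_1})=\mathbb E[\ell_n(X_{t_2})|B_{t_1}]\le \mathbb E[\mcl U(X_{t_2})|B_{t_1}] \quad \text{a.s.}
\]
Taking the supremum over the countable family $n$ then yields $\mcl U(X_{t_1})\le\mathbb E[\mcl U(X_{t_2})|B_{t_1}]$ almost surely.

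\emph{Well-definedness of the expectations.} The expectation of $\mcl U(X_t)$ makes sense in $(-\infty,\infty]$ because $\mcl U(X_t)\ge \ell_1(X_t)\ge -\|Y_1\|\,\|X_t\|-|c_1|$, which is integrable. The conditional expectation of $\mcl U(X_{t_2})$ is therefore defined in the extended sense for functions bounded below by an integrable function. This matches the remark following the definition.

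The main obstacle is this integrability issue: the definition of sub-martingale must be read with $\mathbb E[\phi(\mu_t)]$ possibly $+\infty$. I would handle it with the extended conditional expectation for variables with integrable negative part, or by noting that the remark after the definition already adopts this convention. The countable affine-minorant representation is the other point to check carefully, and separability of $\mathbb L^p$ settles it.
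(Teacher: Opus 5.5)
Your proof is correct and takes the same route as the paper: lift $\phi$ to $\mcl U=\phi\circ\mcl L$, which is convex and continuous on $\mathbb L^p$, and apply the conditional Jensen inequality to the $\mathbb L^p$-valued martingale $(X_t)_{t\in T}$. You also fill in details the paper leaves implicit: adaptedness through the $1$-Lipschitz map $\mcl L$, the moment identity $\mathbb E[(M_p(\mu_t))^{1/p}]=\mathbb E\|X_t\|_p<\infty$, integrability of the negative part, and a proof of the Banach-valued Jensen inequality via countably many affine minorants (this step needs local boundedness of subgradients, which holds because continuous convex functions are locally Lipschitz).
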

\begin{proof}
Let $\phi: \mpprd\mapsto \R$ be continuous and coupling convex. Then, $\psi: \mathbb L^p\mapsto \R$ defined by $\psi(X) = \phi(\mcl L(X))$ is continuous and convex, see Section 5 in \citep{bertuccibook}. From, Jensen's inequality, $(\psi(X_t))_{t \in T}$ is a sub-martingale, thus the result follows.
\end{proof}
In fact, there is a form of converse to the previous result, at least for discrete martingales. We are able to prove the following.
\begin{Theorem}\label{thm:liftmart}
Let $p \in (1,\infty)$ and $(µ_n)_{n \geq 0}$ be a $p$-horizontal martingale. Then, there exists a $\mathbb L^p$ valued martingale $(X_n)_{n \geq 0}$ such that for any $n \geq 0$, almost surely, $(X_n)_\#Leb =µ_n$.
\end{Theorem}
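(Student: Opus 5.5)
The plan is to build $(X_n)_{n\geq 0}$ inductively, one step at a time, by proving a Strassen-type theorem for the quotient map $\mcl L:\mathbb L^p\to\mpprd$. For $n=0$ I pick $X_0:=S(\mu_0)$, where $S:\mpprd\to\mathbb L^p$ is a measurable selection of the set-valued map $\mu\mapsto\{X\in\mathbb L^p \mid \mcl L(X)=\mu\}$. A concrete choice is the composition of a fixed Borel isomorphism $[0,1]\cong[0,1]^d$ with iterated conditional quantile functions. It is $B_0$-measurable because $\mu_0$ is.

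Assume $X_n$ has been built, is $B_n$-measurable, and satisfies $\mcl L(X_n)=\mu_n$ almost surely. Let $Q^\omega_n$ be a regular conditional law of $\mu_{n+1}$ given $B_n$; it is a probability on $\mpprd$. The core claim is a \emph{one-step lifting lemma}: if $Q$ is a probability on $\mpprd$ with $\int (M_p)^{1/p}\,dQ<\infty$, $X\in\mathbb L^p$, and
\[
\int\phi\, dQ\geq\phi(\mcl L(X))
\]
for every continuous coupling convex $\phi$, then there is a probability $m$ on $\mathbb L^p$ with $\mcl L_\# m=Q$ and barycenter $\int Y\,m(dY)=X$ (Bochner).

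To prove the lemma, I consider the set
\[
\mathcal B:=\left\{\int Y\,m(dY)\ \middle|\ \mcl L_\# m=Q\right\}\subset\mathbb L^p .
\]
It is convex, since the admissible $m$ form a convex set and the barycenter is affine. Suppose $X\notin\ol{\mathcal B}$. Hahn--Banach gives $Z\in\mathbb L^{p'}$ with $\langle Z,X\rangle>\sup_{b\in\mathcal B}\langle Z,b\rangle$. Choosing $m$ concentrated, for each $\nu$, on near-optimal lifts $Y\sim\nu$ (again by measurable selection) gives
\[
\sup_{b\in\mathcal B}\langle Z,b\rangle=\int\phi_Z(\nu)\,Q(d\nu),\qquad \phi_Z(\nu):=\sup_{Y\sim\nu}\langle Z,Y\rangle=-\mcl I_p(\nu,\mcl L(Z)).
\]
The function $\phi_Z$ is continuous and coupling convex, as recalled before Theorem~\ref{thm:ccdent}. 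The hypothesis then gives $\int\phi_Z\,dQ\geq\phi_Z(\mcl L(X))\geq\langle Z,X\rangle$, a contradiction. Hence $X\in\ol{\mathcal B}$.

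The remaining issue is that $\mathcal B$ is closed, i.e. the barycenter is attained. I take $m_k$ with barycenters $b_k\to X$ and use reflexivity of $\mathbb L^p$ ($p\in(1,\infty)$). Since each $m_k$ is supported on the fibers $\mcl L^{-1}(\nu)$ and these fibers are weakly closed modulo convex hulls, I pass to a limit on $\mathbb L^p$ endowed with the weak topology, in the sense of Young measures/disintegration over $\nu$. The price is that the limit $m$ may satisfy only $\mcl L(Y)\leq_c\nu$ rather than $\mcl L(Y)=\nu$. To repair this, I use Proposition~\ref{prop:preliminaries} ($\ol{cco}(\{\nu\})=\{\cdot\leq_c\nu\}$) together with Strassen's theorem in the fibers: any $Y$ with $\mcl L(Y)\leq_c\nu$ is itself the barycenter of a measure carried by $\{\mcl L=\nu\}$. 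Replacing each such $Y$ by this measure gives an exact lift.

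\emph{This closure step is the main obstacle}, together with doing everything measurably in $\omega$. I would first try to run the whole separation argument directly in $L^p(\Omega\times[0,1])$, taking conditional expectations with respect to $B_n\otimes\mcl B$, so that measurability is built in.

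Given the lemma, applied for each $\omega$ with $Q=Q_n^\omega$ and $X=X_n(\omega)$, the hypothesis holds because $(\phi(\mu_k))_k$ is a submartingale for every $\phi$. Countably many $\phi$ suffice, e.g. the $\phi_Z$ with $Z$ in a countable dense subset of $\mathbb L^{p'}$, so the null sets can be handled. I then obtain a kernel $m^\omega_n$ and define $X_{n+1}$ by a measurable transfer, i.e. a randomized selection. This uses either an enlargement of $(\Omega,\mcl A,\mathbb P)$ by an independent uniform variable, or the fact that $\mu_{n+1}$ itself carries enough randomness, so that $X_{n+1}$ is $B_{n+1}$-measurable, $\mcl L(X_{n+1})=\mu_{n+1}$, and $\mathbb E[X_{n+1}\mid B_n]=X_n$.

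Integrability of the constructed process follows from $\mathbb E\|X_n\|_p=\mathbb E[M_p(\mu_n)^{1/p}]<\infty$.
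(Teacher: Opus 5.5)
\textbf{Gap: the closure step, where the exact law constraint is restored.}

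Your separation step is sound. By randomizing the lift (measures $m$ with $\mathcal L_\# m=Q$), you get a set $\mathcal B$ that is convex even with the exact law constraint. Hahn--Banach against $-\mathcal I_p(\cdot,\mathcal L(Z))$ then correctly gives $X\in\overline{\mathcal B}$. The problem is the closure step, and it is not technical. The claim ``any $Y$ with $\mathcal L(Y)\leq_c\nu$ is the barycenter of a measure carried by $\{\mathcal L=\nu\}$'' is false on a fixed inner space, and with it your one-step lemma fails for $d\geq 2$.

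Here is a counterexample. Take $d=2$, $Y(s)=(0,2s-1)$ on $[0,1]$ and $\nu=\frac12(\delta_{-1}+\delta_1)\otimes\mathcal U([-1,1])$. Then $\mathcal L(Y)\leq_c\nu$, so your hypothesis holds for $Q=\delta_\nu$, $X=Y$ by Proposition~\ref{prop:preliminaries}. Suppose $Y=\int W\,m(dW)$ with $\mathcal L(W)=\nu$ for $m$-a.e. $W=(W^1,W^2)$.
\begin{itemize}
\item Both $W^2$ and $2s-1$ are uniform on $[-1,1]$, so $\int\|W^2-(2s-1)\|_2^2\,m(dW)=\frac13-\frac23+\frac13=0$, hence $W^2=2s-1$.
\item Then $\mathcal L(W)=\nu$ forces $W^1$, a $\{\pm1\}$-valued function of $s$, to be independent of $s$ yet non-constant, which is absurd.
\end{itemize}
So $\mathcal B$ is not closed in general. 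Strassen's theorem cannot be realized ``in the fibers'': turning a martingale kernel $\xi_y$ into elements of law $\nu$ that average to $Y$ needs a uniform variable on the inner space independent of $Y$, and an injective $Y$ leaves none.

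Your induction does not prevent this, since nothing controls which $m$ is selected at each step. Along the peacock $\delta_0\leq_c\delta_0\otimes\mathcal U([-1,1])\leq_c\nu$, step one may legitimately output $X_1=(0,\pm(2s-1))$, each with probability $\frac12$. After that, step two has no solution.

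\textbf{How the paper handles it.} The paper separates the two difficulties. Proposition~\ref{prop:step1} only produces a relaxed lift $b(\mu_1)$ with $b(\mu_1)_\#Leb\leq_c\mu_1$. The corresponding set is closed by weak compactness in $L^1(\eta,\mathbb L^p)$ together with Mazur's lemma. The exact law is then restored by enlarging \emph{both} probability spaces:
\begin{itemize}
\item The inner space is taken to be $[0,1]^2$, with $b(\mu_1)$ depending only on the first coordinate, so that $V=\pi_2$ is independent of it.
\item A measurable Strassen coupling (Leskel\"a--Vihola) and a transfer map $G_{\mu_1}$ are selected.
\item On an outer enlargement, $X_1(\omega,\tilde\omega)(s)=G_{\mu_1(\omega)}(b(\mu_1(\omega))(s),V(s)+\tilde\omega\bmod 1)$ has law $\mu_1(\omega)$ and outer mean $X_0$.
\end{itemize}

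\textbf{How to repair your version.} Your randomized formulation can be fixed the same way: the fiber step becomes $Z_u(s)=G_\nu(Y(s),V(s)+u\bmod 1)$. This works only if $Y$ lives on coordinates independent of $V$. So in the induction you must reserve a fresh inner coordinate at each step. For example, take the inner space to be $[0,1]^{\mathbb N}$, let $X_n$ depend only on the first $n+1$ coordinates, and run the separation inside that sub-$\mathbb L^p$.
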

Before proving this result, we note that from it, we can deduce the following result of convergence of horizontal martingales, stated for bounded horizontal martingales (weaker integrability conditions are of course possible, following standard arguments that we do not present here).
\begin{Theorem}\label{thm:martconv}
Let $p \in (1,\infty)$ and $(µ_n)_{n \geq 0}$ be a $p$-horizontal martingale which satisfies: $\exists K > 0, \forall n \geq 0, M_p(µ_n) \leq K$ almost surely.
Then there exists a $\mpprd$ valued random variable $µ_\infty$ such that, for all $k \geq 1$, $\lim_{n \to \infty}\mathbb E[W^k_p(µ_n,µ_\infty)]= 0$.
\end{Theorem}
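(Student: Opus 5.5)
We deduce Theorem \ref{thm:martconv} from Theorem \ref{thm:liftmart} and the mean convergence of martingales in the reflexive space $\mathbb L^p$. By Theorem \ref{thm:liftmart}, there is an $\mathbb L^p$-valued martingale $(X_n)_{n\geq 0}$ on $(\Omega,\mcl A,\mathbb P)$ such that, almost surely, $(X_n)_\#Leb = µ_n$ for every $n$. The moment bound then reads $\|X_n\|_p^p = M_p(µ_n) \leq K$ almost surely. Thus $(X_n)$ is a martingale in $L^\infty(\mathbb P,\mathbb L^p)$ bounded by $K^{1/p}$, and in particular it is uniformly integrable in $L^1(\mathbb P,\mathbb L^p)$.

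Since $p\in(1,\infty)$, $\mathbb L^p$ is reflexive and so has the RNP. We apply the mean convergence theorem for uniformly integrable martingales (Corollary V.2.4 in \citep{diestel}). It gives $X_\infty\in L^1(\mathbb P,\mathbb L^p)$ with $\mathbb E[\|X_n - X_\infty\|_p]\to 0$. The almost sure bound passes to the limit along an a.s. convergent subsequence, so $\|X_\infty\|_p\leq K^{1/p}$ almost surely.

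Set $µ_\infty := (X_\infty)_\#Leb$. It is a $\mpprd$-valued random variable because $X\mapsto \mcl L(X)$ is continuous, indeed $1$-Lipschitz, from $\mathbb L^p$ to $(\mpprd,W_p)$. Since $(X_n,X_\infty)$ is a coupling of $µ_n$ and $µ_\infty$ for almost every $\omega$, we have $W_p(µ_n,µ_\infty)\leq \|X_n - X_\infty\|_p$ almost surely. Both quantities are bounded by $2K^{1/p}$, which gives, for every $k\geq 1$,
$$
\mathbb E[W_p^k(µ_n,µ_\infty)] \leq (2K^{1/p})^{k-1}\,\mathbb E[\|X_n-X_\infty\|_p] \underset{n\to\infty}{\longrightarrow} 0.
$$

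The only delicate points are measurability ones. First, the martingale produced by Theorem \ref{thm:liftmart} must be Bochner measurable in $\mathbb L^p$; this holds because $\mathbb L^p$ is separable, so the conditional expectation framework of Section \ref{sec:mart1} applies. Second, $\omega\mapsto W_p(µ_n(\omega),µ_\infty(\omega))$ must be measurable, which follows from the continuity of $\mcl L$ and of $W_p$. The real content of the result is Theorem \ref{thm:liftmart}, which we are allowed to assume here.
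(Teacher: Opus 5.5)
Your proof is correct and follows the paper's argument. You lift via Theorem \ref{thm:liftmart}, use that the lifted martingale is bounded in $L^\infty(\mathbb P,\mathbb L^p)$ and that $\mathbb L^p$ has the RNP to get mean convergence to some $X_\infty$, set $\mu_\infty=(X_\infty)_\#Leb$, and conclude from $W_p(\mu_n,\mu_\infty)\le\|X_n-X_\infty\|_p$. Your bound $\mathbb E[W_p^k(\mu_n,\mu_\infty)]\le(2K^{1/p})^{k-1}\mathbb E[\|X_n-X_\infty\|_p]$ makes explicit what the paper leaves implicit.

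There is one harmless imprecision: Theorem \ref{thm:liftmart} gives the lifted martingale on an extended probability space, as the paper stresses, not on $(\Omega,\mcl A,\mathbb P)$ itself. So $\mu_\infty$ is a priori defined on that extended space, exactly as in the paper. If you want it on $\Omega$, note that $(\mu_n)$ is then Cauchy in $L^1(\mathbb P;(\mpprd,W_p))$, which is complete.
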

\begin{proof}
Using Theorem \ref{thm:liftmart}, we deduce that there exists an associated $\mathbb L^p$ valued martingale $(X_n)_{n \geq 0}$. Since, for $p \in (1,\infty)$, $\mathbb L^p$ has the RNP (it is reflexive), it follows that $(X_n)_{n \geq 0}$ converges in $L^k(\mathbb P,\mathbb L^p)$ toward some limit $X_\infty\in  L^\infty(\mathbb P, \mathbb L^p)$. Set $µ_\infty = (X_\infty)_\#Leb$ and remark that, almost surely, $W_p(µ_n,µ_\infty) \leq \|X_n-X_\infty\|_p$ yields the result.
\end{proof}

The proof of Theorem \ref{thm:liftmart} being quite involved, we present it separately in a section below.

We end this section by insisting upon the fact that, when working with $\mathbb L^p$ valued random variables, there are two notions of law which we need to distinguish with care. Let $X$ be a $\mathbb L^p$ valued random variable on a probability space $(\Omega, \mcl A , \mathbb P)$, and recall that $\mathbb L^p$ is the Lebesgue space of $\R^d$ valued Borel maps on the canonical space $([0,1], Leb)$. The first notion of a law of $X$ is what we can call its inner law. For every $\omega \in \Omega$, $X(\omega)$ is an element of $\mathbb L^p$ whose (inner) law is $X(\omega)_\#Leb$. The second notion is the outer law, it is the law of $X$ as a random variable on $\Omega$. It is the element $X_\#\mathbb P \in \mcl P(\mathbb L^p)$. Consequently, there are two sources of randomness in $X$, one coming from the outer space $\Omega$, and one from the inner space $[0,1]$.

\subsection{Deterministic non-constant martingales}
In this section, we discuss horizontal martingales which are non-constant but deterministic. We start with an example.\\

 Let $d=1$, $\eta_0= \delta _0$ and $\eta_1 = \frac12(\delta_{-1} + \delta_1)$. One has $(-Id)_\#\eta_1 = \eta_1$, which implies $(\frac12\pi_1 + \frac12 \pi_2)_\#(Id,-Id)_\#\eta_1 = \eta_0$. Thus, for any coupling convex function $\phi$, 
 $$
 \phi(\eta_0) \leq \phi(\eta_1).
 $$
 Thus $(\eta_0,\eta_1)$ is a horizontal martingale which is deterministic but not constant. Of course, such phenomenon cannot happen for martingales on Banach spaces, and this is mainly due to the fact that two distinct points can be separated by an hyperplane, which is defined by an affine function which is convex and concave. In this horizontal geometry, we do not have as many separating functions and it turns out that we cannot separate $\eta_0$ from $\eta_1$. This is obvious if one remarks that even though $\eta_0\ne \eta_1$, $\eta_0 \in \ol{cco}(\eta_1)$.\\
 
 The previous example generalizes instantly to any peacock process. Recall that $(µ_t)_{t \in T}$ is a peacock process if for all $s \leq t$, $µ_s \leq_c µ_t$. Such processes were extensively studied, for instance for their applications in mathematical finance, and we refer to Hirsh et al \citep{yorandco} for more details on them. In fact, we have the following result.
 \begin{Prop}
 The deterministic $p$-horizontal martingales are the $\mpprd$-valued peacock processes. 
 \end{Prop}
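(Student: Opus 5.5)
We prove both inclusions by reducing the statement to Proposition \ref{prop:preliminaries}. A deterministic process $(µ_t)_{t\in T}$ is trivially adapted. The integrability condition reduces to $M_p(µ_t)<\infty$, which holds since $µ_t\in\mpprd$. For deterministic processes, conditional expectations are the identity. Hence the sub-martingale requirement says exactly that, for every continuous coupling convex $\phi:\mpprd\to\R$, the real function $t\mapsto\phi(µ_t)$ is non-decreasing on $T$. So the claim is: $µ_s\leq_c µ_t$ for all $s\leq t$ if and only if $\phi(µ_s)\leq\phi(µ_t)$ for every continuous coupling convex $\phi$ and all $s\le t$.

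\emph{Peacock $\Rightarrow$ horizontal martingale.} This is immediate from the second item of Proposition \ref{prop:preliminaries}. If $s\le t$, then $µ_s\leq_c µ_t$, and any coupling convex $\phi$ satisfies $\phi(µ_s)\leq\phi(µ_t)$. Continuity is not even needed here.

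\emph{Horizontal martingale $\Rightarrow$ peacock.} Fix $s\le t$. We must show $\int f\,dµ_s\le\int f\,dµ_t$ for every convex $f:\R^d\to\R$. The natural candidate is the linear functional $\phi_f(µ)=\int f\,dµ$. It is coupling convex: for $\gamma\in\Pi(µ,\nu)$, Jensen pointwise gives
\[
\int f\bigl((1-\theta)x+\theta y\bigr)\gamma(dx,dy)\le(1-\theta)\int f\,dµ+\theta\int f\,d\nu .
\]
Its lift $X\mapsto\mathbb E[f(X)]$ is convex on $\mathbb L^p$.

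The main obstacle is that $\phi_f$ need not be finite or $W_p$-continuous on $\mpprd$ when $f$ grows faster than $|x|^p$. We handle this by truncation. Convex $f$ is the supremum of countably many affine functions $a_k$, so $f_n:=\max_{k\le n}a_k$ is convex and Lipschitz, with $f_n\uparrow f$. Since $p>1$, $\phi_{f_n}$ is continuous for $W_p$, as $W_1\le W_p$ and $f_n$ is Lipschitz. Each $\phi_{f_n}$ is also coupling convex, so the martingale property gives $\int f_n\,dµ_s\le\int f_n\,dµ_t$. We may pass to the limit by monotone convergence, because $f_n\ge a_1$ and $a_1$ is integrable under measures with a first moment. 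This yields $\int f\,dµ_s\le\int f\,dµ_t$, with both sides in $(-\infty,+\infty]$. Under the usual convention for the convex order, this is exactly $µ_s\le_c µ_t$.

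One could alternatively argue through the first item of Proposition \ref{prop:preliminaries}, separating $µ_s$ from the closed set $\ol{cco}(\{µ_t\})$ by a continuous coupling convex function. We expect the direct truncation argument above to be simpler.
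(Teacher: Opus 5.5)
Your proof is correct and follows the paper's route: the peacock $\Rightarrow$ horizontal martingale direction is the second item of Proposition~\ref{prop:preliminaries}, and the converse tests the sub-martingale property against the coupling convex functionals $\mu\mapsto\int f\,d\mu$ after approximating $f$. Your explicit approximants $f_n=\max_{k\le n}a_k$ are Lipschitz, hence $W_p$-continuous, and combined with monotone convergence they make precise the step the paper only sketches as ``approximating $\phi$ by convex functions growing at most like $|x|^p$''.
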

 \begin{proof}
 Let $(µ_t)_{t \in T}$ be a $p$-horizontal martingale. and let $\phi: \R^d \mapsto \R$ be a convex function. Then $µ \mapsto \int_{\R^d}\phi dµ$ is coupling convex. Approximating $\phi$ by convex functions growing at most like $|x|^p$ if necessary, we deduce that if $s\leq t$, then $\int_{\R^d}\phi dµ_s\leq \int_{\R^d}\phi dµ_t$. On the other hand, the result follows from Proposition \ref{prop:preliminaries}.
\end{proof}
 
  Note that the question of "lifting" peacock processes to martingales has been a well studied topic already, notably by Strassen \citep{strassen}, Kellerer \citep{kellerer} and Hirsch et al \citep{yorandco}. Somehow, in the next section, we study (for discrete times), an extension of such studies to horizontal martingales.\\

We end this section by insisting that such deterministic non-constant martingales yield standard stochastic martingales when lifted at the level of $\mpprd$ (this is obvious in our simple example). Our interpretation of this fact is that martingales taking values in $\mathbb L^p$ can have an inner form of stochasticity (inner because actually acting on the probability space onto which $\mathbb L^p$ is defined). When pulled down at the level of the probability measures, this stochasticity is transformed into a deterministic operation. While lifting horizontal martingales in the next section, we will need to extend the probability space to allow for such randomness to happen.

\subsection{Existence of lifted martingales}

The aim of this section is to establish Theorem \ref{thm:liftmart}. We start by proving the following elementary step.
\begin{Prop}\label{prop:element}
Let $p\in (1,\infty)$, $µ_0 \in \mpprd$ and consider a $\mpprd$ valued random variable $µ_1$ on a probability space $(\Omega, \mcl A,\mathbb P)$ such that $\mathbb E[(M_p(µ_1))^\frac1p] < \infty$, and such that for any continuous and coupling convex $\phi: \mpprd \mapsto \R$, 
\be\label{eq:martm0}
\phi(µ_0) \leq \mathbb E_{\mathbb P}[\phi(µ_1)].
\ee
Then, for any $X_0 \in \mathbb L^p$ of (inner) law $µ_0$, there exists an $\mathbb L^p$ valued random variable $X_1$ such that, for $\mathbb P$ almost every $\omega$, $X_1(\omega)_\#Leb = µ_1(\omega)$, together with 
$$
\mathbb E_{\mathbb P}[X_1] = X_0 \text{ in } \mathbb L^p.
$$
\end{Prop}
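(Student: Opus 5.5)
The plan is a Hahn--Banach separation argument at the level of the lift $\mathbb L^p$, where the hypothesis \eqref{eq:martm0} is tested against the particular coupling convex functions $\phi = -\mcl I_p(\cdot,\nu)$. This is in the spirit of Strassen's theorem.

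First I would reduce to a nonatomic outer space. Replace $(\Omega,\mcl A,\mathbb P)$ by $(\Omega\times[0,1],\mcl A\otimes\mcl B,\mathbb P\otimes Leb)$ and keep $\mu_1$ depending on $\omega$ only. This is the extension of the probability space announced in the previous subsection; it leaves \eqref{eq:martm0} unchanged and supplies the outer randomness needed later. Then define
$$
\mcl S := \{X_1 \in L^1(\mathbb P,\mathbb L^p) \,|\, X_1(\omega)_\#Leb = \mu_1(\omega) \ \mathbb P\text{-a.s.}\}, \qquad K := \{\mathbb E_{\mathbb P}[X_1] \,|\, X_1\in\mcl S\}\subset \mathbb L^p.
$$
$\mcl S$ is non-empty by a measurable selection of quantile-type representatives, for instance through Lemma 5.23 of \citep{carmona2018probabilistic} applied measurably in $\omega$. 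Its elements are integrable because $\|X_1(\omega)\|_p = M_p(\mu_1(\omega))^{1/p}$ and $\mathbb E[(M_p(\mu_1))^{1/p}]<\infty$. Since $\mcl S$ is decomposable and the outer space is nonatomic, a Lyapunov-type argument shows that $\ol K$ is convex.

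Next comes the separation step. Suppose $X_0\notin \ol K$. Since $\mathbb L^p$ is reflexive, there is $Y\in\mathbb L^{p'}$ with $\langle Y,X_0\rangle > \sup_{X_1\in\mcl S}\langle Y,\mathbb E[X_1]\rangle$. By decomposability, this supremum can be taken $\omega$ by $\omega$. It then equals
$$
\mathbb E\Big[\sup_{X\sim\mu_1(\omega)}\langle Y,X\rangle\Big] = \mathbb E[-\mcl I_p(\mu_1,\nu)],
$$
where $\nu = \mcl L(Y)$; this uses Lemma 5.23 of \citep{carmona2018probabilistic} again to realize optimal couplings against the fixed $Y$. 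On the other hand, $\langle Y,X_0\rangle \le -\mcl I_p(\mu_0,\nu)$. The map $-\mcl I_p(\cdot,\nu)$ is continuous and coupling convex, so \eqref{eq:martm0} gives $-\mcl I_p(\mu_0,\nu)\le \mathbb E[-\mcl I_p(\mu_1,\nu)]$, which is a contradiction. Hence $X_0\in\ol K$.

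The main obstacle is to pass from $X_0\in\ol K$ to an exact representation $X_0=\mathbb E[X_1]$ with the inner laws exactly $\mu_1(\omega)$. I would first try compactness. For $X_1^n\in\mcl S$ with $\mathbb E[X_1^n]\to X_0$, the norms $\|X_1^n(\omega)\|_p$ are dominated by the integrable function $M_p(\mu_1(\omega))^{1/p}$. Since $\mathbb L^p$ is reflexive, the sequence is relatively weakly compact in $L^1(\mathbb P,\mathbb L^p)$ (Diestel--Uhl). A weak limit $\tilde X_1$ then satisfies $\mathbb E[\tilde X_1]=X_0$. However, it only has inner laws $\tilde X_1(\omega)_\#Leb \le_c \mu_1(\omega)$, since by the lifted form of Proposition \ref{prop:preliminaries} weak limits stay in the closed convex hull of the law class.

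To restore the exact laws I would use the outer auxiliary variable $v\in[0,1]$ introduced at the start. By Strassen's theorem, measurably in $\omega$, there is a martingale coupling $(\tilde X_1(\omega),Z)$ with $Z\sim\mu_1(\omega)$. Writing $Z=F_\omega(\tilde X_1(\omega),U)$ with $U$ uniform, I would transfer $U$ to the outer coordinate $v$ by a measure-preserving rearrangement of the inner space. Setting $X_1(\omega,v)(s) := F_\omega(\tilde X_1(\omega)(s),v)$, averaging over $v$ gives back $\tilde X_1(\omega)$, so $\mathbb E[X_1]=X_0$, while each $X_1(\omega,v)$ should have inner law $\mu_1(\omega)$. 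This last point is the delicate part: for a fixed $v$, the inner law of $s\mapsto F_\omega(\tilde X_1(\omega)(s),v)$ is not automatically $\mu_1(\omega)$. I expect to need a further randomization of the inner variable $s$ by $v$, through measure-preserving maps of $[0,1]$, to make the inner law correct for almost every $v$. Should this fail, the fallback is to run the separation argument directly on the closed convex set of selections with inner law $\le_c \mu_1$ and then apply this randomization once at the end.
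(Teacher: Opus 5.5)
The gap is in the last step, which is the heart of the proposition. Your first half is sound and reaches the paper's intermediate object (the $b(\mu_1)$ of Proposition~\ref{prop:step1}): a selection $\tilde X_1$ with $\mathbb E_{\mathbb P}[\tilde X_1]=X_0$ and $\tilde X_1(\omega)_\#Leb\leq_c\mu_1(\omega)$. The paper gets there more directly. It separates on the relaxed set $\mcl M$ of expectations of selections with inner law $\leq_c\mu_1$, which is convex for free and closed by the same Diestel--Uhl plus Mazur argument you use. That is exactly your ``fallback'', and it makes the Lyapunov-type convexity of $\ol K$ unnecessary; separating $X_0$ from $\ol{co}(K)$ would also suffice.

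You leave the last step as an expectation, and the fix you sketch cannot work as stated. As you observe, $s\mapsto F_\omega(\tilde X_1(\omega)(s),v)$ does not have inner law $\mu_1(\omega)$. Composing it with measure-preserving maps of $[0,1]$ cannot repair this, since such compositions never change inner laws.

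The missing idea is that the uniform argument of $F_\omega$ must live in the inner space and be independent of $\tilde X_1(\omega)$ for every $\omega$. The outer variable should enter only through a rotation. Concretely, realize the inner space as $[0,1]^2$ with $\tilde X_1(\omega)$ depending only on the first coordinate. This is an extension of the inner space, which the paper makes explicitly; in your proposal the uniform $U$ is never located. Then let $V=\pi_2$ and set $X_1(\omega,v)(s)=F_\omega\big(\tilde X_1(\omega)(s),(V(s)+v)\bmod 1\big)$.
\begin{itemize}
\item For fixed $(\omega,v)$, $(V+v)\bmod 1$ is still inner-uniform and independent of $\tilde X_1(\omega)$. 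So the inner law of $X_1(\omega,v)$ is the second marginal of the martingale coupling, namely $\mu_1(\omega)$.
\item For fixed $s$, $v\mapsto (V(s)+v)\bmod 1$ is uniform. Hence $\int_0^1X_1(\omega,v)(s)\,dv=\int_0^1F_\omega(\tilde X_1(\omega)(s),u)\,du=\tilde X_1(\omega)(s)$, and therefore $\mathbb E[X_1]=X_0$.
\end{itemize}

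Equivalently, your ``randomization by measure-preserving maps'' does work if applied to $Z_\omega(s_1,s_2)=F_\omega(\tilde X_1(\omega)(s_1),s_2)$, which already has inner law $\mu_1(\omega)$. Use the rotations $(s_1,s_2)\mapsto(s_1,(s_2+v)\bmod 1)$, which fix the first coordinate. Averaging over $v$ is then conditional expectation given $s_1$, which returns $\tilde X_1(\omega)$ by the martingale property.

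You also need measurability in $\omega$ of the Strassen coupling and of $F_\omega$, which you only assert. The paper obtains these from Leskel\"a--Vihola and from the proof of Kallenberg's Lemma 2.22.
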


In order to construct such a couple, we pass through an intermediate step. Furthermore, in the statement we use the notation $\mathbb E_{\mathbb P}$ to insist upon the fact that the expectation is taken with respect to the outer probability space. 
\begin{Prop}\label{prop:step1}
Under the assumptions of Proposition \ref{prop:element}, for any $X_0 \in \mathbb L^p$ such that $(X_0)_\#Leb = µ_0$, there exists a Borel map $b: \mpprd \mapsto \mathbb L^p$ such that, almost surely in $\omega$
\be\label{cvxdom}
b(µ_1(\omega))_\#Leb \leq_{c} µ_1(\omega),
\ee
and 
$$
X_0 = \mathbb E_{\mathbb P}[b(µ_1)].
$$
\end{Prop}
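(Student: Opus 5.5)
The plan is to argue by convex duality in $\mathbb L^p$: define a convex set of achievable barycenters, show it is closed by weak compactness, and rule out $X_0$ lying outside it by Hahn--Banach together with the hypothesis \eqref{eq:martm0} applied to a well-chosen coupling convex function.

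\textbf{Step 1: the feasible set.} Let $m := \mcl L(µ_1)$, a Borel probability measure on $\mpprd$. For $µ \in \mpprd$ set
$$
C(µ) := \{X \in \mathbb L^p \,|\, X_\#Leb \leq_c µ\}.
$$
By Proposition \ref{prop:preliminaries}, $C(µ)$ is the lift of $\ol{cco}(\{µ\})$. Hence it is a closed convex subset of $\mathbb L^p$, and it is bounded by $M_p(µ)^{1/p}$, since $|x|^p$ is convex and so $\|X\|_p^p \leq M_p(µ)$ on $C(µ)$. Let $\mathcal S$ be the set of $m$-equivalence classes of Borel maps $b:\mpprd \to \mathbb L^p$ with $b(µ)\in C(µ)$ for $m$-a.e. $µ$. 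Every $b \in \mathcal S$ satisfies $\|b(µ)\|_p \leq M_p(µ)^{1/p}$, which is $m$-integrable by assumption, so $\mathcal S \subset L^1(m;\mathbb L^p)$. The set $\mathcal S$ is non-empty: any Borel map $µ\mapsto X$ with $X_\#Leb = µ$ (for instance built from a fixed measurable parametrization of measures by $[0,1]$) belongs to it. Define the set of barycenters
$$
K := \Big\{\int b\,dm \,\Big|\, b \in \mathcal S\Big\} \subset \mathbb L^p .
$$
Then $K$ is convex, because $\mathcal S$ is convex ($C(µ)$ is convex for each $µ$). The goal is to show $X_0 \in K$; composing the corresponding $b$ with $µ_1$ then gives the statement.

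\textbf{Step 2: $K$ is closed.} The set $\mathcal S$ is convex and strongly closed in $L^1(m;\mathbb L^p)$, since an a.e.-convergent subsequence preserves the constraint $b(µ)\in C(µ)$ because each $C(µ)$ is closed. It is also dominated by the integrable function $M_p(\cdot)^{1/p}$. Since $\mathbb L^p$ is reflexive, a bounded, uniformly integrable subset of $L^1(m;\mathbb L^p)$ is relatively weakly compact (Diestel--Uhl, using that both $\mathbb L^p$ and its dual have the RNP). Hence $\mathcal S$ is weakly compact. The map $b\mapsto \int b\,dm$ is linear and continuous, hence weak-to-weak continuous, so $K$ is weakly compact and in particular norm closed.

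\textbf{Step 3: separation.} Suppose $X_0 \notin K$. Hahn--Banach gives $Y\in\mathbb L^{p'}$ with
$$
\langle Y,X_0\rangle > \sup_{Z\in K}\langle Y, Z\rangle .
$$
Set $\nu := \mcl L(Y)$ and
$$
\phi(µ) := \sup_{X\in C(µ)} \langle Y,X\rangle = \sup_{X_\#Leb = µ}\langle Y,X\rangle = -\mcl I_p(µ,\nu).
$$
The middle equality holds by the closed coupling convex hull description: a linear functional's supremum over a closed convex hull equals its supremum over the generating set. The function $\phi$ is continuous and coupling convex, and clearly $\phi(µ_0)\geq \langle Y,X_0\rangle$. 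Next, I claim $\sup_{Z\in K}\langle Y,Z\rangle = \int \phi\, dm$. The inequality $\leq$ is immediate. For $\geq$, fix $\eps>0$; I need a Borel $b\in\mathcal S$ with $\langle Y,b(µ)\rangle \geq \phi(µ)-\eps$ for all $µ$. I would obtain it from a countable dense family $(X^j)$ in $\mathbb L^p$ together with the Borel map $µ\mapsto T_µ$ sending $µ$ to a random variable of law $µ$. Concretely, take a countable Borel partition of $\mpprd$ into small sets, use continuity of $\phi$, and on each piece transport an $\eps$-optimizer by the measure preserving rearrangements of Lemma 5.23 in \citep{carmona2018probabilistic}. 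Granting the claim, the separation inequality reads $\phi(µ_0) > \mathbb E_{\mathbb P}[\phi(µ_1)]$, which contradicts \eqref{eq:martm0}. Hence $X_0\in K$.

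\textbf{Main obstacle.} The delicate step is the Borel $\eps$-optimal selection in Step 3, since $µ\mapsto C(µ)$ is a set-valued map into an infinite-dimensional space. If the partition argument becomes messy, I would instead invoke the Kuratowski--Ryll-Nardzewski selection theorem applied to the closed-valued map $µ\mapsto\{X\in C(µ) \,|\, \langle Y,X\rangle\geq \phi(µ)-\eps\}$. To use it, I would check that this map is weakly measurable, using that $µ\mapsto C(µ)$ is lower hemicontinuous for $W_p$, which follows again from the rearrangement lemma. A secondary point is confirming the weak compactness criterion in $L^1(m;\mathbb L^p)$, which holds because $\mathbb L^p$ is reflexive.
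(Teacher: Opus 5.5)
Your proposal is correct and follows essentially the same route as the paper: the set of barycenters $\mathbb E_{\mathbb P}[b(\mu_1)]$ is shown to be closed via Diestel--Uhl weak compactness together with Mazur, $X_0$ is separated from it by some $Y\in\mathbb L^{p'}$, the support function is identified with $\mathbb E_{\mathbb P}[-\mcl I_p(\mu_1,\mcl L(Y))]$ through a Kuratowski-type $\eps$-optimal Borel selection, and \eqref{eq:martm0} applied to $-\mcl I_p(\cdot,\mcl L(Y))$ gives the contradiction. The differences are minor: the paper gets non-emptiness from the constant map $b(\mu)=\int x\,\mu(dx)$, and it bounds $\langle Y,b(\mu_1)\rangle$ by $-\mcl I_p(\mu_1,\mcl L(Y))$ directly from the monotonicity of coupling convex functions in the convex order (second item of Proposition \ref{prop:preliminaries}), which avoids your identification of $C(\mu)$ as a closed convex hull in $\mathbb L^p$.
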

The proof is obtained, as usual in this type of questions, see \citep{cartier,strassen}, by a separation argument.
\begin{proof}
We denote by $\mcl M\subset \mathbb L^p$ the set $\{\mathbb E_{\mathbb P}[b(µ_1)] \,|\,b: \mpprd \mapsto \mathbb L^p, \text{ Borel }, \eqref{cvxdom} \text{ holds a.s.}\}$. We now prove that $\mcl M$ is convex and closed in $\mathbb L^p$. The convexity follows from the convexity of the constraint $\leq_c$. To prove that it is closed, let $(x_n)_{n\geq 0}$ be an $\mcl M$-valued sequence converging in $\mathbb L^p$ toward some $x^*$ and $b_n$ a Borel map associated to $x_n$. Denote by $\eta$ the law of $µ_1$. Because of \eqref{cvxdom}, it follows that $\eta$ almost surely, for all $n\geq 0$,
$$
\|b_n\|_{\mathbb L^p} \leq (M_p(µ))^\frac1p.
$$
Because the right-hand side is in $L^1$, we deduce that the sequence $(b_n)_{n \geq 0}$ (belonging to $L^1(\eta,\mathbb L^p)$) is uniformly integrable and bounded. Furthermore, it also implies that for each Borelian $A$ of $\mpprd$, 
$$
\left\|\int_A b_nd\eta\right\|_{\mathbb L^p} \leq \int_A (M_p(µ))^\frac1p \eta(dµ) < \infty.
$$
Thus we can apply Theorem IV.2.1 in \citep{diestel} to obtain that $(b_n)_{n\geq 0}$ is relatively weakly compact in $L^1(\eta,\mathbb L^p)$ (because $\mathbb L^p$ has the RNP). The constraint \eqref{cvxdom} is only stable under strong convergence, but Mazur's Lemma implies that, because of its convexity, it is also stable under weak convergence. Since $b_n \mapsto x_n$ is weakly continuous, we thus obtain that $x^* \in \mcl M$ and thus that the latter is closed.

 Furthermore, $\mcl M$ is non-empty as we can choose $b(µ)$ to be the first moment of $µ$, which satisfies \eqref{cvxdom}. Assume $X_0\notin \mcl M$, then, using Hahn-Banach separation Theorem, there exists $Y \in \mathbb L^{p'}$ (deterministic) such that 
\be\label{eq:HB}
\langle Y , X_0 \rangle > \sup_{X'\in \mcl M} \langle Y, X'\rangle.
\ee
We now claim that 
\be\label{eq:696}
\sup_{X'\in \mcl M} \langle Y, X'\rangle = \mathbb E_{\mathbb P}[-\mcl I_p(µ_1,\mcl L(Y))].
\ee
In order to obtain this equality, fix $\eps > 0$ and consider a Borel map $b_\eps:  \mpprd \mapsto \mathbb L^p$ such that for all $µ$, $b_\eps(µ) \in \{ X \in \mathbb L^p, \mathcal L(X) = µ, \langle X,Y\rangle \geq -\mcl I_p(µ,\mcl L(Y)) - \eps\}=: B_\eps(µ)$. Such a measurable map exists because of Kuratowski selection Theorem, since the set $B_\eps(µ)$ is non-empty, closed, the map $B_\eps$ is measurable and valued in a Polish space. From this we deduce that 
$$
\sup_{X'\in \mcl M} \langle Y\cdot X'\rangle \geq \mathbb E_{\mathbb P}[b(µ_1)\cdot Y] \geq \mathbb E_{\mathbb P}[-\mcl I_p(µ_1,\mcl L(Y))] - \eps.
$$
Since $\eps > 0$ is arbitrary, we obtain the $\geq$ inequality in \eqref{eq:696}. To obtain the reverse inequality, we start with the computation
$$
\sup_{X'\in \mcl M} \langle Y, X'\rangle = \sup_{b, \eqref{cvxdom} \text{ holds }} \mathbb E_{\mathbb P}[\langle Y, b(µ_1)\rangle] \leq \sup_{b, \eqref{cvxdom}\text{ holds }} \mathbb E_{\mathbb P}[-\mcl I_p(b(µ_1)_\#Leb,\mcl L(Y))].
$$
Since $\mcl I_p(\cdot,\nu)$ is coupling concave and almost surely $b(µ_1)_\#Leb\leq_c µ_1$, the reverse inequality is thus a consequence of Proposition \ref{prop:preliminaries}. Using \eqref{eq:696} in \eqref{eq:HB} and using \eqref{eq:martm0} with the coupling convex function $-\mcl I_p(\cdot, \mcl L(Y))$, we arrive at 
$$
\langle Y , X_0 \rangle > - \mcl I_p(µ_0,\mcl L(Y)),
$$
which is a contradiction, hence the result is proved.
\end{proof}
\begin{Rem}\label{rem:separablefamily}
Note that only the family of coupling convex functions $(-\mcl I_p(\cdot,\nu))_{\nu \in \mcl P_{p'}(\R^d)}$ is used in the previous proof, which echoes with Theorem \ref{thm:repconvex}.
\end{Rem}
\begin{Rem}
To establish directly Theorem \ref{thm:liftmart}, we would have liked to replace $\mcl M$ by a set with a similar definition, except for replacing $\leq_{c}$ by an equality in \eqref{cvxdom}. However, such a set would not be convex, and thus not proper for a separation argument.
\end{Rem}
\newpage
As explained in the previous remark, we had to relax the constraint required on $X_1$, and this lead us to the existence of an intermediate random variable $b(µ_1(\cdot))$. To prove existence of a suitable $X_1$, we now make use of Strassen's Theorem (see Section 5 in \citep{strassen}), which we now recall in the particular case in which we shall use it.
\begin{Theorem}[Strassen]\label{thm:strassen}
Let $p \in [1,\infty)$, $µ,\nu \in \mpprd$. Then, $µ \leq_c \nu$ if and only if there exists a martingale coupling $\gamma$ between $µ$ and $\nu$, i.e. $\gamma \in \Pi(µ,\nu)$ and for all couple $(X,Y)$ of law $\gamma$, $\mathbb E[Y|X]= X$.
\end{Theorem}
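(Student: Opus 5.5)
The implication ``martingale coupling $\Rightarrow$ $µ\leq_c\nu$'' is the easy one and I would handle it first. Let $(X,Y)\sim\gamma$ with $\mathbb E[Y|X]=X$, and let $\phi:\R^d\mapsto\R$ be convex. Then $\phi$ has an affine minorant, so $\int\phi\,d\nu\in(-\infty,+\infty]$ is well defined. The conditional Jensen inequality gives $\mathbb E[\phi(Y)|X]\geq\phi(\mathbb E[Y|X])=\phi(X)$, and taking expectations yields $\int\phi\,d\nu\geq\int\phi\,dµ$.

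For the converse I would use a separation argument, in the spirit of the proof of Proposition \ref{prop:step1}. Call a Borel kernel $k:\R^d\mapsto\mcl P_1(\R^d)$ a martingale kernel if $\int y\,k(x,dy)=x$ for every $x$. Set
$$
\mcl S:=\left\{\int k(x,\cdot)\,µ(dx)\ \Big|\ k \text{ martingale kernel}\right\}.
$$
It suffices to show $\nu\in\mcl S$, since then $\gamma(dx,dy)=µ(dx)k(x,dy)$ is the desired coupling. The set $\mcl S$ is convex, because convex combinations of martingale kernels are martingale kernels.

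The first real step is closedness of $\mcl S$. I would work in the space of signed measures with finite first moment, paired with the space $\mcl C_1$ of continuous functions of at most linear growth. To make compactness available, I would restrict to the convex subset $\mcl S_\nu:=\mcl S\cap\{\eta\ |\ \eta\leq_c\nu\}$. On this set the first moments are uniformly integrable: $\int(|y|-R)_+\,d\eta\leq\int(|y|-R)_+\,d\nu$, and $(|y|-R)_+$ is convex. Hence $\mcl S_\nu$ is tight and uniformly integrable. Now let the measures $\int k_n(x,\cdot)µ(dx)$ converge in $\mcl S_\nu$. The associated couplings $\gamma_n=µ\otimes k_n$ have fixed first marginal and tight second marginals, so they are relatively compact in $\mcl P_1(\R^{2d})$. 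The martingale constraint $\int (y-x)h(x)\,\gamma(dx,dy)=0$ for all $h\in\mcl C_b$ passes to $W_1$-limits, and disintegrating the limit coupling produces a martingale kernel. So $\mcl S_\nu$ is closed, in fact compact for the weak topology induced by $\mcl C_1$. It is also nonempty, since the kernel $k(x,\cdot)=\delta_x$ gives $µ$, and $µ\leq_c\nu$ by hypothesis.

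Next I would separate. Suppose $\nu\notin\mcl S_\nu$. Hahn--Banach in this locally convex pairing gives $f\in\mcl C_1$ with
$$
\int f\,d\nu<\inf_{\eta\in\mcl S_\nu}\int f\,d\eta .
$$
Let $f_*$ denote the largest convex minorant of $f$; it is finite because $f\geq -C(1+|x|)$. I claim that the infimum over all of $\mcl S$ equals $\int f_*\,dµ$. The lower bound is immediate: for any martingale kernel, Jensen gives $\int f\,dk(x,\cdot)\geq f_*(x)$. For the reverse inequality, for each $x$ I would select a finitely supported probability measure with barycenter $x$ and $\int f\geq f_*(x)-\eps$; this is possible by Carath\'eodory's description of the convex envelope. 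A Borel choice in $x$ is made via the Kuratowski selection theorem, as in Proposition \ref{prop:step1}.

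The main obstacle is the mismatch between $\mcl S$ and $\mcl S_\nu$. The separation takes place in $\mcl S_\nu$, while the Carath\'eodory kernels need not produce measures that are $\leq_c\nu$. To resolve this, I would first try to avoid $\mcl S_\nu$ entirely: show directly that $\ol{\mcl S}$ is closed in a topology where Hahn--Banach still applies, e.g.\ by proving that $\{\eta\in\mcl S\ |\ \int(|y|-R)_+d\eta\leq\int(|y|-R)_+d\nu\ \forall R\}$ already suffices, so that the Carath\'eodory kernels can be truncated to satisfy these moment bounds at a cost $O(\eps)$ in $\int f$. Once the separation is available on a set where the infimum equals $\int f_*\,dµ$, we get
$$
\int f_*\,d\nu\leq\int f\,d\nu<\int f_*\,dµ ,
$$
which contradicts $µ\leq_c\nu$ applied to the convex function $f_*$. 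This proves $\nu\in\mcl S$, and hence the existence of the martingale coupling.
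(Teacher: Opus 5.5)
The paper gives no proof of this statement; it quotes it from Section 5 of Strassen's paper, so your proposal has to stand on its own. Your easy direction (conditional Jensen, with the affine minorant of $\phi$ giving integrability) is fine. The hard direction, however, has a genuine gap at exactly the decisive step.

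\textbf{The gap.} You separate $\nu$ from $\mathcal S_\nu$ and obtain $\int f\,d\nu<\inf_{\mathcal S_\nu}\int f\,d\eta$. The only infimum you know how to evaluate is the one over $\mathcal S$. Since $\mathcal S_\nu\subset\mathcal S$, you only get $\inf_{\mathcal S_\nu}\ge\inf_{\mathcal S}=\int f_*\,d\mu$, which is the wrong direction. So the concluding chain $\int f_*\,d\nu\le\int f\,d\nu<\int f_*\,d\mu$ is not established. That chain is the entire content of the theorem, and the proposal leaves it at ``I would first try''.

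The suggested repair cannot work for general $f\in\mathcal C_1$. Take $f(y)=-|y|$. The infimum over $\mathcal S$ is $-\infty$: split $\delta_x$ into two far-away atoms with barycenter $x$. Under the constraints $\int(|y|-R)_+\,d\eta\le\int(|y|-R)_+\,d\nu$, however, the infimum is at least $-\int|y|\,d\nu$. So no truncation ``at cost $O(\varepsilon)$'' exists. The same example refutes your claim that $f_*$ is finite because $f\ge -C(1+|x|)$: the largest convex minorant of $-|x|$ is identically $-\infty$. A non-convex linear lower bound says nothing about the convex envelope.

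\textbf{The fix.} The detour through $\mathcal S_\nu$ is unnecessary. Hahn--Banach needs a closed convex set, not a compact one, and your closedness argument never actually used $\nu$.
\begin{itemize}
\item If $\mu k_n\to\eta$ in $W_1$, the convergent sequence $(\mu k_n)$ is itself tight with uniformly integrable first moments. Hence the couplings $\mu\otimes k_n$ are relatively compact in $\mathcal P_1(\mathbb R^{2d})$, the martingale constraint passes to the limit, and $\eta\in\mathcal S$.
\item On $\mathcal P_1(\mathbb R^d)$ the topology $\sigma(\cdot,\mathcal C_1)$ is metrized by $W_1$; compare $\nu$ with its image under the radial retraction onto a large ball. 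Also, $\mathcal P_1(\mathbb R^d)$ is weakly closed among signed measures with finite first moment. So $\mathcal S$ is closed in the locally convex space, and you may separate $\nu$ from $\mathcal S$ itself.
\item Your Carath\'eodory--Kuratowski selection then gives $\inf_{\mathcal S}\int f\,d\eta\le\int f_*\,d\mu$. To keep $\mu k\in\mathcal P_1$, use $k(x,\cdot)=\delta_x$ wherever the selected measure has first moment larger than $N$, and let $N\to\infty$.
\item The strict separation with $\int f\,d\nu$ finite forces $f_*\not\equiv-\infty$. A convex envelope that is $-\infty$ at one point is $-\infty$ everywhere, so $f_*$ is then a real-valued convex function with $|f_*|\le C(1+|x|)$.
\end{itemize}
With this, the chain legitimately contradicts $\mu\le_c\nu$.
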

For more on this type of results, we also refer to \citep{cartier,blackwell}.
\begin{proof}[Proof of Proposition \ref{prop:element}]
Let $X_0\in \mathbb L^p$ be of law $µ_0$ and consider the map $b$ given by Proposition \ref{prop:step1}. As stated just above, we want to use Strassen's Theorem to go from $b(µ_1)$ to some $X_1$. Formally, conditioned on $µ_1$, because $b(µ_1)_\#Leb \leq_{c} µ_1$, we know that there is a martingale coupling $\gamma_{µ_1}$ between the two previous probability measures (Theorem \ref{thm:strassen}). We denote by $\xi_{µ_1,X}$ the disintegration of $\gamma_{µ_1}$ along its first variable. Assuming that there exists a uniform on $[0,1]$ random variable $V$ defined on the inner probability space, independent from $b(µ_1)$, we can then deduce the existence of the required $X_1$. Indeed, using Lemma 2.22 in \citep{kallenberg}, there exists a measurable map $G_{µ_1}: \mathbb R^d\times [0,1]\mapsto \R^d$ such that for every $X \in \mathbb L^p$, and $\tilde V\sim \mcl U([0,1])$,  $G_{µ_1}(X,\tilde V)$ has law $\xi_{µ_1,X}$. In particular, we enlarge the probability space $\Omega$ to $\Omega\times \tilde \Omega$, where $\tilde \Omega= [0,1]$ with Lebesgue measure, and we set $X_1(\omega,\tilde \omega)(s) = G_{µ_1(\omega)}(b(µ_1(\omega))(s),V(s) +. \tilde \omega)$ to obtain the result, where $+.$ is the addition modulo $1$.

 In order to make the previous argument rigorous, there are four things to verify. First, that the map $µ_1\mapsto \gamma_{µ_1}$ can be chosen measurable. Second, that the same can be done for the map $G: µ_1 \mapsto G_{µ_1}$, third that there exists a random variable $V$, defined on the inner probability space, uniform on $[0,1]$, independent from $b(µ_1(\omega))$ for any $\omega \in \Omega$, and fourth that $X_1$ indeed satisfies $\mathbb E_{\mathbb P} [X_1] = X_0$ since $\mcl L(X_1) = µ_1$ is satisfied by construction.

The first step follows from Kuratowski selection result. Because $b$ is measurable, so is $µ \mapsto b(µ)_\#Leb$. Then, we obtain that the set valued map $µ \mapsto \{\gamma\in \Pi_M(\theta_µ,µ)\}$ is also measurable, where $\Pi_M$ denotes the set of martingale couplings. Furthermore it has closed values. Using once again the separability of the Wasserstein spaces, we deduce that it has a measurable selection which we denote by $\gamma_µ$. This "measurable Strassen" step is established in full details as Theorem 1.3 in \citep{leskela}.

The second point follows from the way Lemma 2.22 is proved in \citep{kallenberg}: it only requires a Borel isomorphism given once and for all and the measurability of the kernel given by the disintegration theorem, which holds.

The third point is not always true, but using the usual isomorphism between $[0,1]$ and $[0,1]^2$, we can always assume that our elements in $\mathbb L^p$ are defined on the inner probability space $[0,1]^2$ with Lebesgue measure, and that the map $b$ given by the previous Proposition is such that $b(µ_1(\omega))$ depends only on the first coordinate of the inner variable, for every $\omega$. Thus we can consider $V$ defined as $\pi_2$ to have a $\mcl U([0,1])$ random variable, independent (on the inner probability space) from $b(µ_1(\omega))$ for any $\omega$. Observe that this is equivalent to extending the inner probability space.

The last point follows from the computation, 
$$
\ba
\mathbb E_{\mathbb P}[X_1(s)] &= \int_{\Omega\times \tilde\Omega}G_{µ_1(\omega)}(b(µ_1(\omega))(s),V(s) +. \tilde \omega)\mathbb P(d\omega)d\tilde  \omega\\
&= \int_{\Omega\times \tilde\Omega}G_{µ_1(\omega)}(b(µ_1(\omega))(s), \tilde \omega)\mathbb P(d\omega)d\tilde  \omega\\
&= \int_{\Omega}b(µ_1(\omega))(s)\mathbb P(d\omega)= \mathbb E_{\mathbb P}[b(µ_1)] = X_0.
\ea
$$

\end{proof}
\begin{Rem}
Answering the previous remark, we were able to complete the proof by extending the probability space and use what is a measurable version of Strassen's Theorem to realize the "extra" randomness to pass from $b(µ_1)$ to an element  whose law is actually $µ_1$ almost surely.
\end{Rem}

From Proposition \ref{prop:element} (and its proof), we can then establish the following.
\begin{Prop}
Let $B_0$ and $B_1$ be two sub $\sigma$-algebras of a probability space $(\Omega, \mcl A, \mathbb P)$ such that $B_0\subset B_1$. Let $p\in (1,\infty)$ and $µ_0$ (resp. $µ_1$) be $B_0$ adapted (resp. $B_1$ adapted) $\mpprd$ valued random variable such that $\mathbb E_{\mathbb P}[(M_p(µ_i))^\frac1p] < \infty$ for $i= 0,1$. Let $X_0$ be a $B_0$ adapted, $\mathbb L^p$-valued random variable such that almost surely $(X_0)_\#Leb = µ_0$. If for any continuous and coupling convex $\phi: \mpprd \mapsto \R$, almost surely,
\be\label{eq:martm02}
\phi(µ_0) \leq \mathbb E_{\mathbb P}[\phi(µ_1)|B_0].
\ee
Then, there exist an extended probability space $(\tilde \Omega, \tilde{ \mcl A}, \tilde {\mathbb P})$, a sub $\sigma$-algebra $A_1$ of $\tilde{\mcl A}$ such that $B_0\subset A_1$, and $X_1$, a $A_1$ adapted $\mathbb L^p$-valued random variables, such that almost surely, $µ_1 = (X_1)_\#Leb$ and, almost surely,
$$
\mathbb E[X_1|B_0] = X_0 \text{ in } \mathbb L^p.
$$
\end{Prop}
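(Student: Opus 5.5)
The plan is to apply Proposition \ref{prop:element} conditionally on $B_0$, pointwise in the value of $(µ_0, X_0)$. The proof then becomes a measurable-selection version of the same argument.

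First, disintegrate $\mathbb P$ with respect to $B_0$. Since $\mpprd$ and $\mathbb L^p$ are Polish, there is a regular conditional law $\kappa(\omega_0, d\mu)$ of $µ_1$ given $B_0$. Concretely, I would work with the joint law of $(µ_0, X_0, µ_1)$ on $\mpprd\times\mathbb L^p\times\mpprd$ and disintegrate it along its first two coordinates. This gives a kernel $\kappa_{(m,x)}$ on $\mpprd$, defined for almost every $(m,x)$ with $x_\#Leb = m$. Next I would transfer hypothesis \eqref{eq:martm02} to the kernel: for almost every $(m,x)$ and every continuous coupling convex $\phi$, $\phi(m)\le \int\phi\,d\kappa_{(m,x)}$. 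The null set depends on $\phi$, so this requires a countable reduction. By Remark \ref{rem:separablefamily}, only the family $-\mcl I_p(\cdot,\nu)$ is needed. Using continuity of $\mcl I_p$ in $\nu$ for $W_{p'}$ and a countable dense set of $\nu$'s, I would take one common null set. The integrability $\mathbb E[M_p(µ_1)^{1/p}]<\infty$ passes to almost every kernel by Fubini.

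Second, apply the construction of Proposition \ref{prop:step1} and of the proof of Proposition \ref{prop:element} to each kernel $\kappa_{(m,x)}$, with initial point $x$. This produces, for each $(m,x)$, a Borel map $b_{(m,x)}$ and then $G$ and $X_1$. The main obstacle is joint measurability in $(m,x)$. Proposition \ref{prop:step1} used Hahn--Banach only to show existence: it does not produce $b$ constructively. I would handle this by considering the set-valued map
\[
(m,x)\mapsto \Big\{ b\in L^1(\kappa_{(m,x)},\mathbb L^p) : b(\mu)_\#Leb\le_c \mu \ \kappa_{(m,x)}\text{-a.s.},\ \textstyle\int b\,d\kappa_{(m,x)} = x \Big\}.
\]
To make the ambient space fixed, I would realize $b$ as an element of $L^1$ of the joint law, i.e.\ as a map $b(m,x,\mu)$. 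The set $\{b : b(m,x,\mu)_\#Leb\le_c\mu,\ \mathbb E[b\mid (m,x)] = x\}$ is convex and closed, and nonemptiness follows fibrewise from Proposition \ref{prop:step1}. A direct alternative is to rerun the separation argument of Proposition \ref{prop:step1} globally. The set of achievable $\mathbb E[b(µ_0,X_0,µ_1)\mid B_0]$ is convex and closed in $L^1(\mathbb P,\mathbb L^p)$ by the same Diestel--Uhl weak compactness and Mazur argument. If $X_0$ were not in it, Hahn--Banach in $L^1(\mathbb P_{B_0},\mathbb L^p)$ would give a separating $Y\in L^\infty(B_0,\mathbb L^{p'})$. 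The analogue of \eqref{eq:696} would be proved with a Kuratowski selection depending measurably on $(\omega_0,\mu)$. The contradiction would then follow from \eqref{eq:martm02} applied with $\phi = -\mcl I_p(\cdot,\mcl L(Y(\omega_0)))$; since this $\phi$ is random but $B_0$-measurable, the countable reduction above is what justifies this step. I would follow this global route first, since it avoids parametrized selection theorems for $b$.

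Third, given $b$, perform the Strassen step as in the proof of Proposition \ref{prop:element}. Use the measurable Strassen theorem (Theorem 1.3 in \citep{leskela}) applied to the pair $(b(\cdot)_\#Leb, µ_1)$, and the map $G$ from Lemma 2.22 in \citep{kallenberg}. Then enlarge the outer space to $\tilde\Omega = \Omega\times[0,1]$, and the inner space to $[0,1]^2$ as before. Set $X_1(\omega,\tilde\omega)(s) = G_{µ_1(\omega)}(b(\omega)(s), V(s)+.\,\tilde\omega)$ and take $A_1 = B_1\otimes\mcl B([0,1])$. Then $X_1$ is $A_1$-adapted, $B_0\subset A_1$, and $(X_1)_\#Leb = µ_1$ almost surely. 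The same computation as before, now conditioned on $B_0$ and using that the extra coordinate $\tilde\omega$ is independent of $B_0$, gives $\mathbb E[X_1\mid B_0] = \mathbb E[b\mid B_0] = X_0$.
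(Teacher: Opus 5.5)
Your global route is the paper's argument: countable reduction of \eqref{eq:martm02} to the family $-\mcl I_p(\cdot,\nu)$, separation in $L^1(\Omega,\mathbb L^p)$, then the Strassen step with an independent extra coordinate. However, you condition on the wrong $\sigma$-algebra. Disintegrating the joint law of $(\mu_0,X_0,\mu_1)$ along $(\mu_0,X_0)$ gives the conditional law of $\mu_1$ given $\sigma(\mu_0,X_0)$, not given $B_0$. Applying Proposition \ref{prop:element} fibrewise to that kernel therefore only yields $\mathbb E[X_1\mid \mu_0,X_0]=X_0$.

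For the same reason, the set $\{\mathbb E[b(\mu_0,X_0,\mu_1)\mid B_0]\}$ in your global route is too small. The separating $Y$ is an arbitrary $B_0$-measurable variable. So the $\varepsilon$-optimal selection needed for the $\geq$ half of the analogue of \eqref{eq:696} depends on $Y(\omega)$, and is not a function of $(\mu_0,X_0,\mu_1)$.

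The claim genuinely fails for your class. Take $d=1$ and $B_0=\sigma(Z)$ with $Z$ uniform on $\{1,2\}$. Let $\mu_0\equiv\nu:=\frac12(\delta_{-1}+\delta_1)$, let $X_0\equiv x_0$ be deterministic with inner law $\nu$, and let $\eta:=\frac12(\delta_{-3}+\delta_3)$. Set $\mu_1=\eta$ on $\{Z=1\}$; on $\{Z=2\}$ let $\mu_1$ be $\eta$ or $\delta_0$ with conditional probability $\frac12$ each.

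Then \eqref{eq:martm02} holds. On $\{Z=1\}$ this is because $\nu\leq_c\eta$. On $\{Z=2\}$, write $\Phi$ for the lift of $\phi$; then $\phi(\nu)\leq\Phi(\tfrac32x_0)\leq\frac12\Phi(3x_0)+\frac12\Phi(0)=\frac12\phi(\eta)+\frac12\phi(\delta_0)$, using Proposition \ref{prop:preliminaries} for the first inequality.

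Here $\sigma(\mu_0,X_0)$ is trivial, so your $b$ is a function of $\mu_1$ alone. The event $\{Z=1\}$ forces $b(\eta)=x_0$. On the other hand, $b(\delta_0)=0$ is forced by $b(\delta_0)_\#Leb\leq_c\delta_0$, and then $\{Z=2\}$ forces $b(\eta)=2x_0$, a contradiction. Yet the following $\beta$ satisfies every constraint: $\beta=x_0$ on $\{Z=1\}$, $\beta=2x_0$ on $\{Z=2,\ \mu_1=\eta\}$, and $\beta=0$ on $\{Z=2,\ \mu_1=\delta_0\}$.

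The fix is what the paper does. Let $b$ range over all $B_1$-measurable (e.g. $B_0\vee\sigma(\mu_1)$-measurable) maps $\beta:\Omega\to\mathbb L^p$ with $\beta_\#Leb\leq_c\mu_1$ almost surely; this is the paper's set $\mcl M^*$. Do the countable reduction with the regular conditional law of $\mu_1$ given $B_0$, which you introduced at the start before switching to $(\mu_0,X_0)$. Your Kuratowski selection in $(\omega_0,\mu)$ and your final formula written with $b(\omega)$ already presuppose this larger class.

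With that change, your argument goes through and coincides with the paper's. In particular, your justification for applying \eqref{eq:martm02} to the random, $B_0$-measurable $\nu=\mcl L(Y)$ is more explicit than the paper's.
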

\begin{proof}
The proof is an adaptation of the one of Proposition \ref{prop:element}. We indicate the main changes but do not give all details. First, because of Remark \ref{rem:separablefamily}, \eqref{eq:martm02} only needs to hold for coupling convex functions of the form $(-\mcl I_p(\cdot,\xi))_{\xi \in \mcl P_{p'}(\R^d)}$. Because of the separability of $\mcl P_{p'}(\R^d)$ and the regularity of $\mcl I_p$ in its second argument, we only need to verify \eqref{eq:martm02} for a countable family. Hence, almost surely, \eqref{eq:martm02} holds for any coupling convex $\phi$.

To construct the analogue of the map $b$ of Proposition \ref{prop:step1}, we consider instead a measurable function $\beta$ of $\Omega$ itself, and obtain it by considering the space $\mcl M^*$ defined by $\mcl M^* := \{\mathbb E[\beta|B_0] \,|\, \beta: \Omega\mapsto \mathbb L^p, \text{ Borel }, \beta_\#Leb \leq_c µ_1 \text{ a.s.}\}\subset L^1(\Omega,\mathbb L^p)$, which is a separable Banach space. To apply a separation argument, we need to verify that $\mcl M^*$ is indeed closed, convex and non-empty. The arguments here followed the one we gave for $\mcl M$ above.

The passage from $\beta$ to $X_1$ then follows as in the previous case.

\end{proof}

From the previous proposition, Theorem \ref{thm:liftmart} simply follows by induction. We insist upon the fact that our construction of the lifted martingale $(X_n)_{n \geq 0}$ requires a possible extension of the probability space at each stage $n$. Notably, the martingale $(X_n)_{n \geq 0}$ is defined on a filtration which is quite larger than the one on which $(µ_n)_{n \geq 0}$ is defined.

\subsection*{Acknowledgments}
Funded by the European Union (ERC, PaDiESeM, 101222038). Views and opinions expressed are however those of the author only and do not necessarily reflect those of the European Union or the European Research Council. Neither the European Union nor the granting authority can be held responsible for them.
The author acknowledges a partial support from the Chair FDD (Institut Louis Bachelier).

\bibliographystyle{plainnat}
\bibliography{bibwas}

\begin{thebibliography}{42}
\providecommand{\natexlab}[1]{#1}
\providecommand{\url}[1]{\texttt{#1}}
\expandafter\ifx\csname urlstyle\endcsname\relax
  \providecommand{\doi}[1]{doi: #1}\else
  \providecommand{\doi}{doi: \begingroup \urlstyle{rm}\Url}\fi

\bibitem[Agueh and Carlier(2011)]{carlier}
Martial Agueh and Guillaume Carlier.
\newblock Barycenters in the wasserstein space.
\newblock \emph{SIAM Journal on Mathematical Analysis}, 43\penalty0
  (2):\penalty0 904--924, 2011.

\bibitem[Alfonsi and Jourdain(2020)]{alfonsi}
Aur{\'e}lien Alfonsi and Benjamin Jourdain.
\newblock Squared quadratic {W}asserstein distance: optimal couplings and
  {L}ions differentiability.
\newblock \emph{ESAIM: Probability and Statistics}, 24:\penalty0 703--717,
  2020.

\bibitem[Aliaga(2026)]{aliaga2026}
Ram{\'o}n~J Aliaga.
\newblock Lipschitz-free spaces and purely 1-unrectifiable metric spaces.
\newblock \emph{arXiv preprint arXiv:2606.02918}, 2026.

\bibitem[Bayraktar et~al.(2023)Bayraktar, Ekren, and Zhang]{bayraktar}
Erhan Bayraktar, Ibrahim Ekren, and Xin Zhang.
\newblock A smooth variational principle on wasserstein space.
\newblock \emph{Proceedings of the American Mathematical Society}, 151\penalty0
  (09):\penalty0 4089--4098, 2023.

\bibitem[Bellini et~al.(2021)Bellini, Koch-Medina, Munari, and
  Svindland]{bellini2021law}
Fabio Bellini, Pablo Koch-Medina, Cosimo Munari, and Gregor Svindland.
\newblock Law-invariant functionals on general spaces of random variables.
\newblock \emph{SIAM Journal on Financial Mathematics}, 12\penalty0
  (1):\penalty0 318--341, 2021.

\bibitem[Bertucci(2023)]{bertucci2023monotone}
Charles Bertucci.
\newblock Monotone solutions for mean field games master equations: continuous
  state space and common noise.
\newblock \emph{Communications in Partial Differential Equations}, 48\penalty0
  (10-12):\penalty0 1245--1285, 2023.

\bibitem[Bertucci(2025)]{bertucci2025tangent}
Charles Bertucci.
\newblock The tangent space to the {Wasserstein} space: parallel transport and
  other applications.
\newblock \emph{arXiv preprint arXiv:2512.09763}, 2025.

\bibitem[Bertucci(2026)]{bertuccibook}
Charles Bertucci.
\newblock Analysis on spaces of measures.
\newblock \emph{arXiv preprint arXiv:2607.19939}, 2026.

\bibitem[Bertucci and Lions(2026)]{nonstegall}
Charles Bertucci and Pierre-Louis Lions.
\newblock Non-linear {S}tegall's lemma and general {Hamilton-Jacobi-Bellman}
  equations on {Wasserstein} spaces.
\newblock \emph{arXiv preprint arXiv:2606.31297}, 2026.

\bibitem[Blackwell(1953)]{blackwell}
David Blackwell.
\newblock Equivalent comparisons of experiments.
\newblock \emph{The annals of mathematical statistics}, pages 265--272, 1953.

\bibitem[Borwein and Preiss(1987)]{BP}
Jonathan~M Borwein and David Preiss.
\newblock A smooth variational principle with applications to
  subdifferentiability and to differentiability of convex functions.
\newblock \emph{Transactions of the American Mathematical Society},
  303\penalty0 (2):\penalty0 517--527, 1987.

\bibitem[Bourgain(1977)]{bourgain2}
Jean Bourgain.
\newblock On dentability and the {Bishop-Phelps} property.
\newblock \emph{Israel Journal of Mathematics}, 28\penalty0 (4):\penalty0
  265--271, 1977.

\bibitem[Bourgain(1980)]{bourgain1}
Jean Bourgain.
\newblock Dunford-pettis operators on l 1 and the radon-nikodym property.
\newblock \emph{Israel Journal of Mathematics}, 37\penalty0 (1):\penalty0
  34--47, 1980.

\bibitem[Cardaliaguet et~al.(2019)Cardaliaguet, Delarue, Lasry, and
  Lions]{cdll}
Pierre Cardaliaguet, Fran{\c{c}}ois Delarue, Jean-Michel Lasry, and
  Pierre-Louis Lions.
\newblock \emph{The Master Equation and the Convergence Problem in Mean Field
  Games:(AMS-201)}, volume 201.
\newblock Princeton University Press, 2019.

\bibitem[Carmona et~al.(2018)Carmona, Delarue,
  et~al.]{carmona2018probabilistic}
Ren{\'e} Carmona, Fran{\c{c}}ois Delarue, et~al.
\newblock \emph{Probabilistic Theory of Mean Field Games with Applications
  I-II}.
\newblock Springer, 2018.

\bibitem[Cartier et~al.(1964)Cartier, Fell, and Meyer]{cartier}
Pierre Cartier, JMG Fell, and Paul-Andr{\'e} Meyer.
\newblock Comparaison, des mesures port{\'e}es par un ensemble convexe compact.
\newblock \emph{Bulletin de la Soci{\'e}t{\'e} Math{\'e}matique de France},
  92:\penalty0 435--445, 1964.

\bibitem[Cavagnari et~al.(2026)Cavagnari, Savar{\'e}, and Sodini]{cavagnari}
Giulia Cavagnari, Giuseppe Savar{\'e}, and Giacomo~Enrico Sodini.
\newblock A {Lagrangian approach to totally dissipative evolutions in
  {W}asserstein spaces}.
\newblock \emph{Journal of Differential Equations}, 470:\penalty0 114395, 2026.

\bibitem[Cosso et~al.(2024)Cosso, Gozzi, Kharroubi, Pham, and
  Rosestolato]{cosso2021master}
Andrea Cosso, Fausto Gozzi, Idris Kharroubi, Huy{\^e}n Pham, and Mauro
  Rosestolato.
\newblock Master bellman equation in the wasserstein space: Uniqueness of
  viscosity solutions.
\newblock \emph{Transactions of the American Mathematical Society},
  377\penalty0 (01):\penalty0 31--83, 2024.

\bibitem[Darling(1982)]{darling}
Richard~WR Darling.
\newblock Martingales in manifolds. definition, examples and behaviour under
  maps.
\newblock \emph{S{\'e}minaire de probabilit{\'e}s de Strasbourg}, 16:\penalty0
  217--236, 1982.

\bibitem[Diestel and Uhl(1977)]{diestel}
Joseph Diestel and John~J Uhl.
\newblock \emph{Vector measures}.
\newblock Mathematical surveys and monograph, vol 15. American Mathematical
  Society, 1977.

\bibitem[Ekeland(1974)]{ekeland}
Ivar Ekeland.
\newblock On the variational principle.
\newblock \emph{Journal of Mathematical Analysis and Applications}, 47\penalty0
  (2):\penalty0 324--353, 1974.

\bibitem[Gangbo and Tudorascu(2019)]{gangbotudorascu}
Wilfrid Gangbo and Adrian Tudorascu.
\newblock On differentiability in the {W}asserstein space and well-posedness
  for {Hamilton--J}acobi equations.
\newblock \emph{Journal de Math{\'e}matiques Pures et Appliqu{\'e}es},
  125:\penalty0 119--174, 2019.

\bibitem[Godefroy(2015)]{lfs}
Gilles Godefroy.
\newblock A survey on {Lipschitz-free Banach spaces}.
\newblock \emph{Commentationes Mathematicae}, 55\penalty0 (2), 2015.

\bibitem[Hirsch et~al.(2011)Hirsch, Profeta, Roynette, and Yor]{yorandco}
Francis Hirsch, Christophe Profeta, Bernard Roynette, and Marc Yor.
\newblock \emph{Peacocks and associated martingales, with explicit
  constructions}.
\newblock Springer Science \& Business Media, 2011.

\bibitem[Kallenberg(1997)]{kallenberg}
Olav Kallenberg.
\newblock \emph{Foundations of modern probability}.
\newblock Springer, 1997.

\bibitem[Kellerer(1972)]{kellerer}
Hans~G Kellerer.
\newblock Markov-komposition und eine anwendung auf martingale.
\newblock \emph{Mathematische Annalen}, 198\penalty0 (3):\penalty0 99--122,
  1972.

\bibitem[Lacker(2023)]{lacker}
Daniel Lacker.
\newblock Hierarchies, entropy, and quantitative propagation of chaos for mean
  field diffusions.
\newblock \emph{Probability and mathematical physics}, 4\penalty0 (2):\penalty0
  377--432, 2023.

\bibitem[Leskel{\"a} and Vihola(2017)]{leskela}
Lasse Leskel{\"a} and Matti Vihola.
\newblock {Conditional convex orders and measurable martingale couplings}.
\newblock \emph{Bernoulli}, 23\penalty0 (4A):\penalty0 2784 -- 2807, 2017.

\bibitem[Lions(2011-2012)]{lions20112012}
Pierre-Louis Lions.
\newblock Cours au college de france.
\newblock \emph{www.college-de-france.fr}, 2011-2012.

\bibitem[Meyer(2006)]{sanslarmes}
Paul-Andr{\'e} Meyer.
\newblock G{\'e}om{\'e}trie stochastique sans larmes.
\newblock In \emph{S{\'e}minaire de Probabilit{\'e}s XV 1979/80: Avec table
  g{\'e}n{\'e}rale des expos{\'e}s de 1966/67 {\`a} 1978/79}, pages 44--102.
  Springer, 2006.

\bibitem[Phelps(1988)]{phelps}
Robert~R Phelps.
\newblock \emph{Convex functions, monotone operators and differentiability},
  volume 1364.
\newblock Springer, 1988.

\bibitem[Pinzi and Savar{\'e}(2025)]{pinzi}
Alessandro Pinzi and Giuseppe Savar{\'e}.
\newblock Totally convex functions, {$ L^2$-Optimal transport for laws of
  random measures, and solution to the M}onge problem.
\newblock \emph{arXiv preprint arXiv:2509.01768}, 2025.

\bibitem[Pisier(2016)]{pisier}
Gilles Pisier.
\newblock \emph{Martingales in {B}anach spaces}.
\newblock Cambridge {U}niversity press, 2016.

\bibitem[Renesse and Sturm(2009)]{sturm2}
Max-K~von Renesse and Karl-Theodor Sturm.
\newblock Entropic measure and {W}asserstein diffusion.
\newblock \emph{The annals of probability}, 37\penalty0 (3), 2009.

\bibitem[Ryff(1965)]{ryff}
John~V Ryff.
\newblock {Orbits of $L^1$-functions under doubly stochastic transformations}.
\newblock \emph{Transactions of the American Mathematical Society},
  117:\penalty0 92--100, 1965.

\bibitem[Schiavo(2024)]{schiavo}
Lorenzo~Dello Schiavo.
\newblock Massive particle systems, wasserstein brownian motions, and the
  dean-kawasaki equation.
\newblock \emph{arXiv preprint arXiv:2411.14936}, 2024.

\bibitem[Schwartz(1980)]{schwartz}
Laurent Schwartz.
\newblock \emph{Semi-martingales sur des vari{\'e}t{\'e}s, et martingales
  conformes sur des vari{\'e}t{\'e}s analytiques complexes}.
\newblock Springer, 1980.

\bibitem[Stegall(1978)]{stegall}
Charles Stegall.
\newblock Optimization of functions on certain subsets of banach spaces.
\newblock \emph{Mathematische Annalen}, 236\penalty0 (2):\penalty0 171--176,
  1978.

\bibitem[Strassen(1965)]{strassen}
Volker Strassen.
\newblock The existence of probability measures with given marginals.
\newblock \emph{The Annals of Mathematical Statistics}, 36\penalty0
  (2):\penalty0 423--439, 1965.

\bibitem[Sturm(2002)]{sturm}
Karl-Theodor Sturm.
\newblock Nonlinear martingale theory for processes with values in metric
  spaces of nonpositive curvature.
\newblock \emph{The Annals of Probability}, 30\penalty0 (3):\penalty0
  1195--1222, 2002.

\bibitem[Sturm(2026)]{sturm3}
Karl-Theodor Sturm.
\newblock {Wasserstein diffusion on multidimensional spaces}.
\newblock \emph{The Annals of Probability}, 54\penalty0 (2):\penalty0 610 --
  643, 2026.

\bibitem[Villani(2009)]{villani}
C{\'e}dric Villani.
\newblock \emph{Optimal transport: old and new}, volume 338.
\newblock Springer, 2009.

\end{thebibliography}

\appendix \section{Standard proofs}
\textbf{Proof of Theorem \ref{thm:RNP}:}

The proof follows Theorem V.3.7 in \citep{diestel}. We start by applying the Radon-Nikodym Theorem (in $\R$), to the finite measure $\|G\|$ and we denote by $\phi$ its derivative with respect to $µ$. We set for $n \geq 1$, $A_n = \{\omega | n-1 \leq \phi (\omega) < n\}$. Clearly, $(A_n)_{n \geq 1}$ forms a partition of $\Omega$ and, for every $n \geq 1$, for all $A \subset A_n$
$$
\frac{|G|(A)}{µ(A)} \leq n.
$$
Let $\eps > 0$ and $A \in \mcl A$ such that $µ(A) > 0$. There exists $n \geq 1$ such that $µ(A_n\cap A) > 0$. Thus, there exists $A' \in \mcl A$ such that $\mcl O := \{\frac{G(B)}{µ(B)}, B \subset A', µ(B) > 0\}$ is bounded in $E$, where in the previous set and in all what follows, we are only concerned with sets which belong to $\mcl A$. Furthermore, it is a subset of $\mcl K$. Thus, by using Proposition \ref{prop:dent} (on $\mcl K$), we know that there exists $C \subset A'$, $µ(C) > 0$ such that 
\be\label{eq:star}
\frac{G(C)}{µ(C)} \notin \ol{co}(\mcl O\setminus B(\frac{G(C)}{µ(C)},\eps)).
\ee 
If 
$$
\sup\left\{\left\|\frac{G(E)}{µ(E)} - \frac{G(C)}{µ(C)} \right\|, E \subset C\right\} \leq \eps,
$$
we stop and use Lemma \ref{lemma:X} below. If not, let $j_1$ be the smallest integer $\geq 2$ such that 
$$
\exists C_1 \subset C, µ(C_1) \geq \frac{1}{j_1} \text{ and } \left\|\frac{G(C_1)}{µ(C_1)} - \frac{G(C)}{µ(C)} \right\| > \eps.
$$
Because $G$ is a vector measure,
$$
\frac{G(C)}{µ(C)} = \frac{G(C_1)}{µ(C_1)}\frac{µ(C_1)}{µ(C)} + \frac{G(C\setminus C_1)}{µ(C\setminus C_1)}\frac{µ(C\setminus C_1)}{µ(C)}.
$$
Now, if $\sup\left\{\left\|\frac{G(E)}{µ(E)} - \frac{G(C)}{µ(C)} \right\|, E \subset C\setminus C_1\right\} \leq \eps$, we stop and use Lemma \ref{lemma:X} below, else we continue in the same way, and if this process stops, we use Lemma \ref{lemma:X} below, otherwise, we obtain sequences $(C_n)_{n \geq 1}$ and $(j_n)_{n \geq 1}$ such that
\begin{itemize}
\item $\left\|\frac{G(C_n)}{µ(C_n)} - \frac{G(C)}{µ(C)} \right\| \geq \eps$ for all $n \geq 1$,
\item if $E \subset C \setminus (\cup_{i=1}^m C_i)$, $\left\|\frac{G(E)}{µ(E)} - \frac{G(C)}{µ(C)} \right\| > \eps$, then $µ(E) < \frac{1}{j_m-1},$
\item $$
\frac{G(C)}{µ(C)} =  \frac{G(C\setminus (\cup_{n=1}^mC_n))}{µ(C\setminus (\cup_{n=1}^mC_n))}\frac{µ(C\setminus (\cup_{n=1}^mC_n))}{µ(C)} + \sum_{n = 1}^m\frac{G(C_n)}{µ(C_n)}\frac{µ(C_n)}{µ(C)}
$$
\end{itemize}
Recall that, as $ m \to \infty$, $\frac{G(C \setminus (\cup_{n=1}^mC_n))}{µ(C\setminus (\cup_{n=1}^mC_n))}$ is bounded, thus $\lim_{m \to \infty} µ(C\setminus (\cup_{n=1}^mC_n)) > 0$, otherwise we contradict \eqref{eq:star}. Write $B = C \setminus (\cup_{n=1}^\infty C_n)$. We now argue that 
$$
\sup\left\{\left\|\frac{G(E)}{µ(E)} - \frac{G(C)}{µ(C)} \right\|, E \subset B, µ(E) > 0\right\} \leq \eps.
$$
Indeed, if it is not the case, $\exists E \subset C\setminus (\cup_{i=1}^mC_i)$ for all $m \geq 1$, and $\left\|\frac{G(E)}{µ(E)} - \frac{G(C)}{µ(C)} \right\|> \eps$, so that by the second point above, $µ(E) \leq \frac{1}{j_m-1}$ for all $m$. However, $ j_m\to \infty$ as $m \to \infty$. Indeed, $µ(C_m) \geq \frac{1}{j_m}$ and $(C_m)$ is a sequence of disjoint elements implies that $\sum_{m \geq 1} \frac{1}{j_m} \leq \sum_{m \geq 1}µ(C_m) < \infty$. In particular, $µ(E) =0$ which is a contradiction. Hence the proof is complete thanks to the next lemma.

\begin{Lemma}\label{lemma:X}
Let $G$ be a $µ$-continuous vector measure of bounded variation valued in $\mcl K$. Assume that for every $\eps > 0$ and $A \in \mcl A$, $µ(A) > 0$, there exists $B \subset A$, $B \in \mcl A$, $µ(B) > 0$ such that $\{\frac{G(E)}{µ(E)}, E \in \mcl A, E \subset B, µ(E) > 0\}$ has diameter at most $\eps$. Then, there exists $f \in L^1(µ,\mcl K)$ such that for all $E \in \mcl A$,
$$
G(E) =\int_{\Omega}f(\omega)µ(d\omega).
$$
\end{Lemma}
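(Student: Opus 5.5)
This is the classical exhaustion argument of Diestel--Uhl (Lemma III.2.4 / Theorem V.3.7 there). The plan is to build $f$ as an almost everywhere limit of simple functions taking values in averages $G(B)/\mu(B)$, which lie in $\mcl K$.

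\textbf{Step 1: a partition at each scale.} Fix $\eps=2^{-n}$. Using the hypothesis together with a maximality (Zorn, or greedy measure-exhaustion) argument, I choose a countable disjoint family $\pi_n=(B^n_i)_i$ with $\mu(B^n_i)>0$ and $\mu(\Omega\setminus\cup_i B^n_i)=0$. Each element is chosen so that $\{G(E)/\mu(E),\ E\subset B^n_i,\ \mu(E)>0\}$ has diameter at most $2^{-n}$. If the union missed a set of positive measure, the hypothesis applied to that set would produce a further admissible $B$, contradicting maximality. Since the property passes to subsets of positive measure, I can refine: intersecting with $\pi_{n-1}$, I may assume $\pi_n$ refines $\pi_{n-1}$.

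\textbf{Step 2: the approximants.} I set
$$
f_n=\sum_i \frac{G(B^n_i)}{\mu(B^n_i)}\mathbb 1_{B^n_i}.
$$
Each value lies in $\mcl K$, because $\mcl K$ is a cone and $G$ is $\mcl K$-valued. Integrability follows from $\int\|f_n\|\,d\mu=\sum_i\|G(B^n_i)\|\le |G|(\Omega)<\infty$, by bounded variation. For $m\ge n$, each $B^m_j$ lies in some $B^n_i$, so $\|f_m-f_n\|\le 2^{-n}$ pointwise a.e. Hence $(f_n)$ is uniformly Cauchy off a null set, and converges uniformly a.e. to some $f$. This $f$ is $\mu$-measurable as a uniform limit of countably-valued functions; one can truncate to finitely many atoms using integrability if simple functions are insisted upon. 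Also $\|f-f_n\|\le 2^{-n}$, so $f\in L^1(\mu,E)$.

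\textbf{Step 3: values in $\mcl K$.} The limit $f$ takes values in $\mcl K$ provided $\mcl K$ is closed in $E$. I expect this to be the main obstacle. I would prove closedness directly, as for $\mpt$ in Section \ref{sec:fa}. Elements of $\mcl K$ are nonnegative finite measures with second moment. Testing against $1\in X$ and $|x|^2\in X$ bounds the mass and second moment along a converging sequence. Prokhorov's theorem then gives a weak limit, which is identified with the $E$-limit by testing against compactly supported smooth functions in $X$. If the limiting mass is zero, the limit is $0\in\mcl K$.

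\textbf{Step 4: the representation.} For $E\in\mcl A$, I compare $G(E)=\sum_i G(E\cap B^n_i)$ with $\int_E f_n\,d\mu=\sum_i \mu(E\cap B^n_i)\,G(B^n_i)/\mu(B^n_i)$. The series for $G$ converges because $G$ is countably additive, which follows from bounded variation and $\mu$-continuity as in Diestel--Uhl. Term by term, the diameter bound gives
$$
\bigl\|G(E\cap B^n_i)-\mu(E\cap B^n_i)\,G(B^n_i)/\mu(B^n_i)\bigr\|\le 2^{-n}\mu(E\cap B^n_i).
$$
Terms with $\mu(E\cap B^n_i)=0$ vanish by $\mu$-continuity. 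Summing, $\|G(E)-\int_E f_n\,d\mu\|\le 2^{-n}\mu(\Omega)$. Letting $n\to\infty$ and using $\|f-f_n\|_1\le 2^{-n}\mu(\Omega)$ yields $G(E)=\int_E f\,d\mu$.
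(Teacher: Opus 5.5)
Your proposal is correct and is essentially the paper's argument, namely the Diestel--Uhl exhaustion construction of the countably-valued approximants $\sum_i \frac{G(B_i)}{\mu(B_i)}\chi_{B_i}$. The paper shows these are Cauchy in $L^1$ by bounding the variation of $G-\int f_\eps\,d\mu$ by $\eps\mu(\Omega)$, whereas you nest the partitions to get the pointwise bound $\|f_m-f_n\|\le 2^{-n}$; you also actually check that $\mcl K$ is closed in $E$, which the paper takes for granted.

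One caveat applies equally to your proof and the paper's: both use that $G$ is countably additive. This follows from $\mu$-continuity in the Diestel--Uhl sense ($G(A)\to 0$ as $\mu(A)\to 0$). It does not follow from the weaker definition given in the paper (vanishing on $\mu$-null sets), so the hypothesis must be read in the Diestel--Uhl sense.
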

\begin{proof}
This result and proof follows Lemma V.3.6 in \citep{diestel}. Let $\eps > 0$. By the exhaustion Lemma, there exists a sequence $(E_n(\eps))_{n \geq 1}$ valued in $\mcl A$ with $µ(E_n(\eps)) > 0$ for all $n$, $µ(\Omega \setminus (\cup_{i = 1}^\infty E_i(\eps))) = 0$, and $\{\frac{G(E)}{µ(E)}, E \subset E_n(\eps), µ(E) > 0, E \in \mcl A\}$ has diameter at most $\eps$. Define $f_\eps: \Omega \mapsto \mcl K$ by $f_\eps = \sum_{n =1}^\infty \frac{G(E_n(\eps))}{µ(E_n(\eps))} \chi_{E_n(\eps)}$ and $F_\eps$ the vector measure defined for $A \in \mcl A$ by
$$
F_\eps(A) = \int_A f_\eps(\omega)µ(d\omega).
$$
For $\pi$ a measurable countable partition of $\Omega$,
$$
\sum_{E \in \pi} \|F(E) - F_\eps(E)\| \leq \sum_{E \in \pi} \sum_{n=1}^\infty \left\|\frac{F(E\cap E_n(\eps))}{µ(E\cap E_n(\eps))} - \frac{F(E_n(\eps))}{µ(E_n(\eps))}  \right\|µ(E\cap E_n(\eps)) \leq \eps µ(\Omega).
$$
From this estimate, we easily obtain that $(f_\eps)$ is a Cauchy sequence. Its limit $f$ is of course valued in $\mcl K$ and satisfies the claim.
\end{proof}

\end{document}